\documentclass[11pt,pdflatex]{amsart}
\usepackage[usenames]{color}
\usepackage{color}
\usepackage{amssymb}
\usepackage{graphicx, epsfig}
\usepackage{latexsym, amsfonts, amscd, amsmath}
\usepackage{mathrsfs}
\makeindex \setcounter{tocdepth}{2}
\input xy
\xyoption{all}

 \voffset = -20pt \hoffset = -60pt \textwidth =
460pt \textheight =610pt \headheight = 12pt \headsep = 20pt

\definecolor{orange}{rgb}{1,0.5,0}
\definecolor{Indigo}{rgb}{0.2,0.1,0.7}
\definecolor{Violet}{rgb}{0.5,0.1,0.7}



\newtheorem{thm}{Theorem}[section]
\newtheorem{prop}[thm]{Proposition}
\newtheorem{lem}[thm]{Lemma}
\newtheorem{cor}[thm]{Corollary}

\theoremstyle{definition}
\newtheorem{dfn}[thm]{Definition}

\theoremstyle{remark}
\newtheorem{rmk}[thm]{Remark}

\numberwithin{equation}{section}
\numberwithin{figure}{subsection} \numberwithin{table}{subsection}


\newcommand{\codim}{{\operatorname{codim }}}

\newcommand{\Fr}{{\operatorname{Fr }}}

\newcommand{\Ker}{{\operatorname{Ker}}}

\newcommand{\val}{{\operatorname{val}}}
\newcommand{\Ver}{{\operatorname{Ver }}}


\newcommand{\GL}{{\operatorname{GL}}}

\newcommand{\Lie}{{\operatorname{Lie}}}



\newcommand{\gera}{{\frak{a}}}

\newcommand{\gerd}{{\frak{d}}}

\newcommand{\germ}{{\frak{m}}}

\newcommand{\gerp}{{\frak{p}}}
\newcommand{\gerq}{{\frak{q}}}

\newcommand{\gert}{{\frak{t}}}

\newcommand{\gerC}{{\frak{C}}}
\newcommand{\gerD}{{\frak{D}}}

\newcommand{\gerM}{{\frak{M}}}

\newcommand{\gerS}{{\frak{S}}}

\newcommand{\gerX}{{\frak{X}}}
\newcommand{\gerY}{{\frak{Y}}}
\newcommand{\gerZ}{{\frak{Z}}}


\newcommand{\uA}{{\underline{A}}}
\newcommand{\uB}{{\underline{B}}}

\newcommand{\calA}{{\mathcal{A}}}

\newcommand{\calC}{{\mathcal{C}}}

\newcommand{\calF}{{\mathcal{F}}}

\newcommand{\calI}{{\mathcal{I}}}

\newcommand{\calM}{{\mathcal{M}}}

\newcommand{\calO}{{\mathcal{O}}}

\newcommand{\calR}{{\mathcal{R}}}

\newcommand{\calU}{{\mathcal{U}}}
\newcommand{\calV}{{\mathcal{V}}}
\newcommand{\calW}{{\mathcal{W}}}


\def\BB{\mathbb{B}}
\def\CC{\mathbb{C}}
\def\DD{\mathbb{D}}

\def\GG{\mathbb{G}}
\def\HH{\mathbb{H}}
\def\II{\mathbb{I}}

\def\QQ{\mathbb{Q}}
\def\RR{\mathbb{R}}
\def\SS{\mathbb{S}}

\def\ZZ{\mathbb{Z}}


\newcommand{\scrP}{{\mathscr{P}}}




\newcommand{\arr}{{\; \rightarrow \;}}
\newcommand{\Arr}{{\; \longrightarrow \;}}

\newcommand{\ol}{{\mathcal{O}_L}}

\newcommand{\Spf}{\operatorname{Spf }}


\newcommand{\Xrig}{{\gerX_\text{\rm rig}}}

\newcommand{\Yrig}{{\gerY_\text{\rm rig}}}

\newcommand{\spe}{{\operatorname{sp }}}

\newcommand{\Xbar}{\overline{X}}
\newcommand{\Ybar}{\overline{Y}}

\newcommand{\pe}{(\varphi,\eta)}
\newcommand{\peprime}{(\varphi',\eta')}
\newcommand{\Wpe}{W_{\varphi,\eta}}
\newcommand{\Zpe}{Z_{\varphi,\eta}}

\newcommand{\Qbar}{\overline{Q}}
\newcommand{\Pbar}{\overline{P}}

\newcommand{\Qpbar}{\overline{\QQ}_p}

\newcommand{\Sib}{{\rm{Sib}}}

\newcommand{\tYrig}{\tilde{\gerY}_{\rm{rig}}}
\newcommand{\tXrig}{\tilde{\gerX}_{\rm{rig}}}
\newcommand{\tYrigtau}{\tilde{\gerY}^{|\tau|\leq 1}_{\rm{rig}}}
\newcommand{\tXrigtau}{\tilde{\gerX}^{|\tau|\leq 1}_{\rm{rig}}}
\newcommand{\Yrigtau}{\gerY^{|\tau|\leq 1}_{\rm{rig}}}

\newcommand{\tYrigp}{\tilde{\gerY}_{\rm{rig}}^\gerp}

\begin{document}
\marginparwidth 50pt

\openup 1pt

\title[Modularity lifting in parallel weight one]{Modularity lifting in parallel weight one}
\date{September 2011}
\author{Payman L Kassaei}
\address{Department of Mathematics, King's College London,
Strand, London WC2R 2LS, United Kingdom.}
\email{payman.kassaei@kcl.ac.uk}
\subjclass{Primary [11F80, 11F33, 11F41]; Secondary [11G18, 14G35, 14G22].}

\begin{abstract}
We prove an analogue of the main result of Buzzard and Taylor \cite{BuzzardTaylor} for totally real fields in which $p$ is unramified. This can be used to prove certain cases of the strong Artin conjecture over totally real fields.
\end{abstract}

\maketitle

\section{Introduction}



In \cite{BuzzardTaylor}, Buzzard and Taylor proved a modularity lifting theorem for
certain Galois representations in weight 1, combining methods of
Wiles and Taylor (\`a la Diamond \cite{Diamond2}) with analytic continuation of
overconvergent modular forms. This work was part of a program laid
out by Taylor in \cite{Taylor} to prove the strong Artin conjecture in many
icosahedral instances.

The work of Buzzard and Taylor highlighted the importance of the
geometric interpretation of  overconvergent modular forms, and
proved useful the idea of analytic continuation. Later, in
\cite{Kassaeicoleman}, we presented another application of  analytic continuation by giving a
geometric proof of Coleman's classicality theorem. This method  has been used
to prove classicality criteria for overconvergent automorphic forms
in several contexts. One can mention \cite{Kassaeiclassicality},
\cite{Sasaki}, \cite{Ramsey}, \cite{Pilloni}, and, quite recently,
the classicality results in the unramified Hilbert case
\cite{PilloniStroh}, \cite{Tian}.

In this article, we use ideas from our work on analytic continuation
of overconvergent Hilbert modular forms to prove an analogue of the main result of
\cite{BuzzardTaylor} in the unramified  Hilbert case. The case where $p$ is split in the totally real field has been proven by Sasaki \cite{Sasaki2}. To understand
the dynamic of the $U_p$ Hecke correspondence, we use our results
(joint with E. Goren) on the geometry of the Iwahori-level Hilbert
modular variety studied in \cite{GorenKassaei}, as well as
calculations using Breuil-Kisin modules. We should mention that
Breuil-Kisin module calculations in this context have also been used
by Y. Tian in \cite{Tian}, and first appeared in the work of S.
Hattori on canonical subgroups \cite{Hattori}.

Since, unlike the classical case, analytic continuation of  overconvergent Hilbert modular forms in the
locus of non-ordinary reduction of the Hilbert modular variety is dependent upon slope
conditions which are impossible to hold in this case, the processes 
of analytic continuation and gluing are subtler. For example, in our argument, it is crucial to prove that overconvergent  Hilbert modular forms of finite slope can be analytically extended to a certain region in the ``middle parts" of the p-adic rigid analytic Iwahori-level  Hilbert modular variety $\Yrig$ which is saturated with respect to the map $\pi:\Yrig \arr \Xrig$ that forgets the level structure at $p$. This region can be thought of as an analogue of the circle of radius $p/(p+1)$ in the case of modular curves. 

Let us briefly outline the strategy of the proof. For simplicity, assume now that $p$ is inert in the totally real field in question. Let $\calV \subset \Yrig$ be the region defined in \cite[\S 5.3.]{GorenKassaei}; it can be characterized as the region in $\Yrig$ classifying pairs consisting of a Hilbert-Blumenthal abelian variety and its canonical subgroup. In \cite{GorenKassaei}, we have proved that every finite slope overconvergent Hilbert modular form
can be analytically extended to $\calV$. Let $w: \Yrig \arr \Yrig$ be the Atkin-Lehner involution. Buzzard-Taylor's idea of gluing forms on $\calV$ and $w^{-1}(\calV)$ to create a Hilbert modular form on $\Yrig$ fails in this case, as $\calV \cup w^{-1}(\calV)$ is only a very small region in $\Yrig$: it misses tubes in $\Yrig$ of Zariski dense open  subsets of  all the `vertical' irreducible components of $\Ybar$, the reduction mod $p$ of a formal model of $\Yrig$. 

Let $\Ybar$ (respectively, $\Xbar$) denote  the reduction mod $p$ of a suitable formal model of $\Yrig$ (respectively, $\Xrig$). At the heart of our argument lies the  proof that every overconvergent Hilbert modular form of finite slope can be analytically extended from $\calV$ to a region $\calR \subset \Yrig$ with the property that the region $\calR \cup w^{-1}(\calV)\subset \Yrig$ is ``large" in the sense that  its complement collapses to  a closed subset of codimension two in $\Xbar$,  under the forgetful map $\pi: \Ybar \arr \Xbar$. On the other hand,  we show that the form obtained by gluing two forms on $\calR$ and $w^{-1}(\calV)$ descends under the morphism $\pi:\Yrig \arr \Xrig$ to a form, which, by the above fact, is defined on $\Xrig$ away from the tube of a codimension-two subset of $\Xbar$. We then invoke a rigid analytic Koecher principle to show that such a form automatically extends to the entire $\Xrig$, providing us with a classical Hilbert modular form of level prime to $p$, and weight $(1,1,\cdots,1)$.  The geometry of the intersection and interposition of irreducible components of the strata in $\Ybar$, studied in \cite{GorenKassaei}, is used crucially, especially in the gluing of forms on $\calR$ and $w^{-1}(\calV)$.

We now state our main result. Let $p>2$ be a
prime number, and $L$ a totally real field in which $p$ is unramified. For any prime ideal $\gerp|p$, let
$D_\gerp$ denote a decomposition group at $\gerp$.  We prove:

\begin{thm}{\label{Theorem: main-intro}} Let $\rho: Gal(\overline{\QQ}/L) \arr \GL_2(\calO)$ be a continuous representation, where $\calO$ is the ring of integers in a finite extension of $\QQ _p$, and $\germ$ its maximal ideal. Assume

\begin{itemize}

\item $\rho$ is unramified outside a finite set of primes,

\item For every prime $\gerp | p$, we have $\rho_{|_{D_\gerp}} \cong \alpha_\gerp \oplus \beta_\gerp$, where $\alpha_\gerp,\beta_\gerp: D_\gerp \arr \calO^\times$ are unramified characters, distinct modulo $\germ$,

\item $\overline{\rho}:=(\rho\ {\rm mod}\ \germ)$ is ordinarily modular, i.e., there exists a classical Hilbert modular form $h$  of parallel weight $2$, such that ${\rho}\equiv \rho_h\ (\mathrm{mod}\  \germ)$, and $\rho_{h}$ is potentially ordinary and potentially Barsotti-Tate at every prime of $L$ dividing $p$ (See Remark \ref{Remark: ordinary lift}),

\item $\overline{\rho}$ is absolutely irreducible when restricted to $Gal(\overline{\QQ}/L(\zeta_p))$.

\end{itemize}

Then, $\rho$ is isomorphic to $\rho_f$,  the Galois representation
associated to a Hilbert modular eigenform $f$ of weight
$(1,1,\cdots,1)$ and level $\Gamma_{00} (N)$, for some integer $N$
prime to $p$.

\end{thm}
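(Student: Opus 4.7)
\emph{Proof sketch.} The plan is to follow the Buzzard–Taylor paradigm of \cite{BuzzardTaylor}, adapted to the unramified Hilbert setting by means of the analytic-continuation machinery of \cite{GorenKassaei}. Using the absolute irreducibility of $\overline{\rho}|_{\Gal(\overline{\QQ}/L(\zeta_p))}$ together with the ordinary modular lift $h$ of weight $2$, a Taylor–Wiles–Diamond patching argument in weight $2$, combined with Hida theory and a passage from weight $2$ to overconvergent weight $1$, first produces two $p$-adic overconvergent Hilbert eigenforms $f_1,f_2$ of parallel weight $1$, each associated to $\rho$, defined on strict neighbourhoods of the ordinary locus of $\Yrig$, with the same Hecke eigensystem away from $p$ but swapped $U_{\gerp_0}$-eigenvalues $\alpha_{\gerp_0}(\mathrm{Frob}_{\gerp_0})$ and $\beta_{\gerp_0}(\mathrm{Frob}_{\gerp_0})$ at one distinguished prime $\gerp_0\mid p$.

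By \cite{GorenKassaei}, any finite-slope overconvergent Hilbert modular form extends analytically to the canonical-subgroup locus $\calV\subset\Yrig$. The technical heart of the argument is to extend $f_1$ further, to a region $\calR\subset\Yrig$ chosen so that the complement of $\calR\cup w^{-1}(\calV)$ maps under the forgetful map $\pi\colon\Ybar\arr\Xbar$ into a codimension-$2$ closed subset of $\Xbar$. The strategy will be to iterate a carefully chosen partial $U_p$-correspondence, controlling its dynamics on the stratification of $\Ybar$ catalogued in \cite{GorenKassaei} by means of Breuil–Kisin module computations in the spirit of \cite{Hattori} and \cite{Tian}. Once $f_1$ is extended to $\calR$, the geometry of the intersections and interpositions of strata in $\Ybar$ permits gluing $f_1|_\calR$ with the Atkin–Lehner transform $w^*f_2$ on $w^{-1}(\calV)$ into a single rigid analytic form $F$ on $\calR\cup w^{-1}(\calV)$: agreement across the overlap is forced by the opposite choice of $U_{\gerp_0}$-eigenvalues of $f_1$ and $f_2$.

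The construction of $F$ is arranged so that it descends under the covering $\pi\colon\Yrig\arr\Xrig$ to a rigid analytic form on $\pi(\calR\cup w^{-1}(\calV))\subset\Xrig$, which, by the previous step, is the complement of the tube of a codimension-$2$ subvariety of $\Xbar$. A rigid analytic Koecher principle then automatically extends this form to all of $\Xrig$, yielding a classical Hilbert modular form $f$ of parallel weight $(1,\ldots,1)$ and level $\Gamma_{00}(N)$ for some integer $N$ prime to $p$. The Hecke eigensystem of $f$ matches that of $f_1$ by construction, so $\rho\cong\rho_f$.

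The step I expect to be by far the main obstacle is the analytic continuation of $f_1$ from $\calV$ to $\calR$. Unlike the case of modular curves, $\calV\cup w^{-1}(\calV)$ is very small in $\Yrig$: it misses tubes of Zariski-dense open subsets of the vertical components of $\Ybar$, and ordinary Coleman-style continuation across these components is blocked because the $U_p$-slope inequalities that would force it simply cannot hold in parallel weight $1$. Pinning down the region $\calR$ and verifying that $\pi(\calR\cup w^{-1}(\calV))$ misses only a codimension-$2$ locus in $\Xbar$ will require a delicate stratum-by-stratum analysis of $\Ybar$ together with Breuil–Kisin calculations that control precisely how the $U_p$-correspondence propagates sections across the non-ordinary Iwahori strata; the gluing in the penultimate step then further relies on the fine incidence structure between the 'horizontal' strata hit by $\calR$ and the 'vertical' strata hit by $w^{-1}(\calV)$.
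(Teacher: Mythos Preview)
Your outline has the right large-scale architecture, but there is a genuine gap at the gluing and descent step. You propose to glue $f_1|_{\calR}$ with $w^\ast f_2$ on $w^{-1}(\calV)$ and then descend the resulting $F$ along $\pi\colon\Yrig\to\Xrig$. This cannot work as stated: the value $f_1(\uA,C)$ genuinely depends on the subgroup $C$, so $f_1$ is not constant on $\pi$-fibres and does not descend. In the modular-curve case this is harmless because $\calV\cup w^{-1}(\calV)$ already covers the entire Iwahori curve and one gets a classical form on $Y$ directly; here $\calR\cup w^{-1}(\calV)$ only reaches the locus $\tYrigtau$, so descent to $X$ followed by Koecher is unavoidable, and for that one must glue a section that \emph{is} $\pi$-invariant.

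What the paper actually glues is the linear combination $a_{1,p}f_1-a_{2,p}f_2$ on $\calR$ with $w(f_1-f_2)$ on $w^{-1}(\calR')$; note in particular that \emph{both} $f_1$ and $f_2$ must be continued to $\calR$, not just $f_1$. The key extra ingredient you are missing is that the overlap $\calC=\calR\cap w^{-1}(\calR')$ is $\pi$-\emph{saturated}: if $(\uA,C)\in\calC$ then $(\uA,C')\in\calC$ for every other admissible $C'$. This saturation, proved via a Breuil--Kisin computation, is what forces $(a_{1,p}f_1-a_{2,p}f_2)(\uA,C)$ to be independent of $C$ on $\calC$, and hence what makes the trace $\pi_\ast$ give an honest form on $\tXrigtau$ to which Koecher applies. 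Finally, when $p$ is not inert two forms differing at a single $\gerp_0$ are not enough: one needs the full family $\{f_T:T\subset\SS\}$ of $2^{|\SS|}$ companion forms and an induction adding one prime of $\SS$ at a time, each step repeating the glue-and-saturate argument for the corresponding $w_\gerp$.
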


\begin{rmk} \label{Remark: ordinary lift}
When $p \geq 7$, the third assumption in the statement of Theorem \ref{Theorem: main-intro} 
is automatic as long as $[L(\zeta_p):L]>4$, by \cite[Theorem 6.1.10]{BLGG}. In applications to the Artin conjecture ($p=5$) this assumption can be arranged to be satisfied.
\end{rmk}

 In \cite{KST}, we generalize Theorem  \ref{Theorem: main-intro} to allow some ramification for the characters $\alpha_\gerp,\beta_\gerp$. In  \cite{Sasaki2,Sasaki3}, Sasaki employs Taylor's  method \cite{Taylor}  to prove the mod-$5$ modularity of certain Galois representations in the case of totally real fields in which $5$ is split. His method applies equally well to the case where $5$ is unramified in the totally real field, as explained in \cite[\S 4]{KST}.  Using these results, Theorem \ref{Theorem: main-intro}  immediately implies the following cases of the Strong Artin Conjecture over totally real fields.

\begin{thm}\label{Theorem: Artin-intro} Let $L$ be a totally real field in which $5$ is unramified.
Let
\[
\rho: \mathrm{Gal}(\overline{\mathbb{Q}}/L)\rightarrow GL_2(\mathbb{C})
\]
be a totally odd and continuous representation satisfying the following conditions:
\begin{itemize}
\item $\rho$ has the projective image $A_5$.
\item For every place $\mathfrak{p}$ of $L$ above $5$,   the projective image of the decomposition group $D_\mathfrak{p}$ at $\mathfrak{p}$ has order 2. Furthermore,   the quadratic extension of $L_\mathfrak{p}$ fixed by the kernel of $D_\mathfrak{p}\hookrightarrow G_L\stackrel{\mathrm{proj}\rho}{\rightarrow} A_5$ is \emph{not} $L_\mathfrak{p}(\sqrt{5})$.
\end{itemize}
Then, there exists a holomorphic Hilbert cuspidal eigenform $f$ of weight $(1,1,...,1)$ such that $\rho$ arises from $f$ in the sense of Rogawski-Tunnell, and the Artin $L$-function
$L(\rho, s)$ is entire.
\end{thm}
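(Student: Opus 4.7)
The plan is to reduce the Artin conjecture to Theorem~\ref{Theorem: main-intro} applied with $p=5$, via the exceptional isomorphism $A_5\cong \PSL_2(\FF_5)$. Since $\rho$ has finite image, after choosing a stable lattice we may regard it as a continuous representation $\rho: G_L\to \GL_2(\calO)$, where $\calO$ is the ring of integers in a suitable finite extension of $\QQ_5$; the reduction $\bar\rho$ modulo the maximal ideal $\germ$ then has projective image $A_5\subset \PGL_2(\calO/\germ)$.

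The first step is to produce the ordinary modular lift of $\bar\rho$ required by Theorem~\ref{Theorem: main-intro}. Here I would invoke the mod-$5$ modularity result of Sasaki~\cite{Sasaki2,Sasaki3}, whose extension from the split to the unramified case is spelled out in \cite[\S 4]{KST}. The two local hypotheses at $\gerp\mid 5$ in Theorem~\ref{Theorem: Artin-intro}---order-$2$ projective image of $D_\gerp$, and the exclusion of $L_\gerp(\sqrt 5)$ as the fixed quadratic extension $M$---are precisely those required for Sasaki's method to produce a Hilbert cuspidal eigenform $h$ of parallel weight $2$ with $\bar\rho_h\cong\bar\rho$ and $\rho_h$ potentially ordinary and potentially Barsotti--Tate at each $\gerp\mid 5$.

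Next, I would verify the remaining hypotheses of Theorem~\ref{Theorem: main-intro} for $\rho$. Finite image immediately gives unramifiedness outside a finite set. At each $\gerp\mid 5$, the cyclic order-$2$ projective image of $D_\gerp$ forces $\rho(D_\gerp)$ to be abelian (a group whose quotient by its center is cyclic is abelian), so $\rho|_{D_\gerp}$ splits as $\alpha_\gerp\oplus\beta_\gerp$; the combination of the $A_5$-structure and the hypothesis $M\neq L_\gerp(\sqrt 5)$ allows one to take $\alpha_\gerp,\beta_\gerp$ unramified, and they are automatically distinct modulo $\germ$ because $\alpha_\gerp/\beta_\gerp$ has order exactly $2$ and $2\in\calO^\times$. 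For the absolute irreducibility of $\bar\rho|_{G_{L(\zeta_5)}}$: since $G_{L(\zeta_5)}\subset G_L$ is normal of index $[L(\zeta_5):L]\leq 4$, its projective image is a normal subgroup of $A_5$ of index at most $4$; by the simplicity of $A_5$ this image must be all of $A_5$, which is nonabelian, so $\bar\rho|_{G_{L(\zeta_5)}}$ is absolutely irreducible.

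Applying Theorem~\ref{Theorem: main-intro} then yields a Hilbert modular eigenform $f$ of parallel weight one and level prime to $5$ with $\rho\cong\rho_f$, and $L(\rho,s)=L(f,s)$ is entire by the standard analytic theory of weight-one Hilbert cuspidal eigenforms. The main obstacle is the mod-$5$ modularity input of Sasaki, which is itself a substantial theorem; the exclusion $M\neq L_\gerp(\sqrt 5)$ is built into the hypotheses precisely so that both Sasaki's theorem and the local conditions at $5$ in Theorem~\ref{Theorem: main-intro} are simultaneously met.
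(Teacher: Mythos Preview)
Your proposal is correct and follows exactly the approach the paper takes. The paper's own argument is in fact a single sentence: it states that Sasaki's mod-$5$ modularity results \cite{Sasaki2,Sasaki3}, extended from the split to the unramified case as in \cite[\S4]{KST}, combine with Theorem~\ref{Theorem: main-intro} to yield Theorem~\ref{Theorem: Artin-intro} ``immediately.'' You have supplied considerably more detail---the passage from $\rho$ to a $5$-adic lattice, the verification that $\bar\rho|_{G_{L(\zeta_5)}}$ is absolutely irreducible via the simplicity of $A_5$, the argument that the order-$2$ projective image forces $\rho|_{D_\gerp}$ to split---none of which the paper spells out. The one step you flag as delicate, namely that the hypothesis $M\neq L_\gerp(\sqrt5)$ together with the $A_5$-structure lets one arrange $\alpha_\gerp,\beta_\gerp$ unramified, is also not argued in the paper; both treatments defer this local analysis to the cited works of Sasaki and \cite{KST}.
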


 In \cite{KST}, we use our generalization of   \ref{Theorem: main-intro} to prove a more general version of Theorem \ref{Theorem: Artin-intro}.

\

{\bf{Acknowledgments.}} We would like to thank B.
Conrad, F. Diamond, S. Sasaki, and R. Taylor for helpful conversations.  We are grateful to K. Buzzard, T. Gee, and C. Johannson, as well as E. Goren for reading a first draft of this article and providing valuable feedback. We would also like to thank the
Mathematics Department at Stanford University where the author spent some time in Winter 2011, when part of this work was completed. This work was supported by EPSRC Grant EP/H019537/1.
After this work was completed, and while under review in a journal, we were sent a preprint by V. Pilloni where results similar to those in this paper are presented.

\section{Geometry of Hilbert Modular Varieties}\label{section: stratifications}
In this section, we recall some results on the mod $p$ and $p$-adic  geometry of Hilbert modular varieties studied in \cite{GorenKassaei} and \cite{GorenOort}.

\subsection{Stratifications in characteritic $p$} Let $p$ be a prime number, and  $N\geq 4$ an integer prime to $p$. Let $L/\mathbb{Q}$ be a totally real field of degree $g>1$ in which $p$ is unramified, $\mathcal{O}_L$ its ring of integers, and $\gerd_L$ its different ideal.    For a  prime ideal $\gerp$  of $\ol$ dividing $p$, let $\kappa_\gerp = \ol/\gerp$, a finite field of order $p^{f_\gerp}$. Let $\kappa$ be a finite field containing an isomorphic copy of all $\kappa_\gerp$ and generated by them. We identify $\kappa_\gerp$ with a subfield of $\kappa$ once
 and for all. Let $\mathbb{Q}_\kappa$ be the fraction field of $W(\kappa)$.
 We fix embeddings $\mathbb{Q}_\kappa \subset \QQ_p^{\rm ur}
 \subset \Qpbar$.

Let $\mathbb{B}={\rm Emb}(L,\mathbb{Q}_\kappa)=\textstyle\coprod_{\mathfrak{p}|p}
\mathbb{B}_\mathfrak{p}$, where $\mathbb{B}_\mathfrak{p}= \{\beta\in\mathbb{B}\colon
\beta^{-1}(pW(\kappa)) = \mathfrak{p}\}$, for every prime ideal $\mathfrak{p}$ dividing $p$. Let $\sigma$ denote the
Frobenius automorphism of $\mathbb{Q}_\kappa$, lifting $x \mapsto
x^p$ modulo $p$. It acts on $\mathbb{B}$ via $\beta \mapsto \sigma
\circ \beta$, and transitively on each $\mathbb{B}_\mathfrak{p}$.
For $S \subseteq \mathbb{B}$, we let $S^c=\BB-S$, and
$\ell(S)=\{\sigma^{-1}\circ\beta\colon \beta \in S\}$, the left shift of $S$. Similarly, define the right shift of $S$, denoted $r(S)$. The decomposition \[\mathcal{O}_L
\otimes_\mathbb{Z} W(\kappa)=\bigoplus_{\beta \in \mathbb{B}}
W(\kappa)_\beta,\] where $W(\kappa)_\beta$ is $W(\kappa)$ with the
$\mathcal{O}_L$-action given by $\beta$, induces a decomposition,
\[M=\bigoplus_{\beta\in \mathbb{B}} M_\beta,\] on any $\mathcal{O}_L
\otimes_\mathbb{Z} W(\kappa)$-module $M$.

\

 Let $X/W(\kappa)$ be the Hilbert modular scheme representing the functor which associates to a $W(\kappa)$-scheme $S$, the set of all  polarized abelian schemes with real multiplication and $\Gamma_{00}(N)$-structure $\underline{A}/S=(A/S,\iota,\lambda,\alpha)$ as follows:  $A$ is an abelian scheme of relative
dimension $g$ over  $S$;
the real multiplication $\iota\colon\mathcal{O}_L \hookrightarrow {\rm End}_S(A)$ is a ring
homomorphism endowing $A$ with an action of $\ol$;  the map $\lambda$ is a polarization as in \cite{DP}; $\alpha$ is a rigid
$\Gamma_{00}(N)$-level structure,  that is, $\alpha\colon \mu_N
\otimes_{\ZZ} \gerd_L^{-1} \arr A$, an $\ol$-equivariant
closed immersion of group schemes. Since $p$ is unramified in $L$, the existence of $\lambda$ is equivalent to $\Lie(A)$ being a locally free $\ol\otimes\calO_S$-module.

 Let
$Y/W(\kappa)$ be the Hilbert modular scheme classifying
$(\underline{A}/S, H)$, where $\underline{A}$ is as above and  $H$
is a finite flat  $\mathcal{O}_L$-subgroup scheme of $A[p]$
of rank $p^g$ which is isotropic with respect  to the $\lambda$-Weil
pairing.

Let $\gerp$ be an ideal of $\ol$ dividing $p$.  Let $Y(\gerp)$ be the scheme which classifies $(\underline{A}/S, D)$, where $\uA$ is as above, and  $D\subseteq A[\gerp]$ is an
isotropic $\ol$-invariant finite-flat subgroup scheme of rank $p^{f_\gerp}$.

If $(\uA,H)$ is classified by $Y$, we can decompose 
\[
H=\oplus_{\gerp|p} H_\gerp,
\]
where the decomposition is induced from $\ol \otimes \ZZ_p \cong \Pi_{\gerp | p}\calO_{L,\gerp}$. It follows that each $(\uA,H_\gerp)$ is classified by $Y(\gerp)$, and we have $Y=\Pi_X Y(\gerp)$.

Let
$\pi\colon Y \rightarrow X$ be the natural morphism which on points
is $(\underline{A},H) \mapsto \underline{A}$. Let $\overline{X}, \mathfrak{X}, \mathfrak{X}_{\rm rig}$  be,
respectively,  the special fibre of $X$, the completion of $X$ along
$\overline{X}$, and the rigid analytic space associated to
$\mathfrak{X}$ in the sense of Raynaud. We use similar notation
$\overline{Y}, \mathfrak{Y}, \mathfrak{Y}_{\rm rig}$ for $Y$, and let
$\pi$ denote any of the induced morphisms.
For a point $P \in
\mathfrak{X}_{\rm rig}$, we denote by $\overline{P}={\rm sp}(P)$ its
specialization in $\overline{X}$, and similarly for $Y$.

Let $\gerp|p$ be a a prime ideal of $\ol$. Define 
$w_\gerp\colon Y \rightarrow Y$ to be the automorphism 
\[
(\underline{A},\oplus_{\gerp^\prime|p} H_{\gerp^\prime})
\mapsto (\underline{A}/H_{\gerp},A[\gerp]/H_{\gerp} \oplus \oplus_{\gerp^\prime\neq \gerp} \overline{H_{\gerp^\prime}}).
\]
Let $w\colon Y \arr Y$ be the automorphism $(\uA,H) \arr (\uA/H, \uA[p]/H)$. The morphisms $w_\gerp$ commute with each other, and their composition is $w$. We denote by the same notation, the analytifications $w,w_\gerp: \Yrig \arr \Yrig$.


In the following, we recall stratifications on $\Xbar$ and $\Ybar$ studied, respectively, in \cite{GorenOort} and \cite{GorenKassaei}. First, recall that to any $(\uA,H)$ in $Y$, one  can associate the data $(f\colon\uA
\arr \uB)$, where  $\uB =\uA/H$ is a polarized abelian scheme
with real multiplication and $\Gamma_{00}(N)$-structure, and $f:\uA \arr \uB$ is the natural projection, which is an $\ol$-isogeny killed by $p$ and of degree $p^g$. See Lemma 2.12 of \cite{GorenKassaei}.

 Let $k\supseteq \kappa$ be a field. For a closed point $\Qbar=(\uA,H)$ of $\Ybar$ defined over $k$, corresponding to $f\colon \uA \arr \uB$ as above, there is a unique $\ol$-isogeny $f^t\colon
\uB \arr \uA$ such that $f^t\circ f = [p_A]$ and $f\circ f^t = [p_B]$.
There are induced morphisms
\begin{align}
\bigoplus_{\beta\in \BB}\Lie(f)_\beta & \colon \bigoplus_{\beta\in
\BB}\Lie(\uA)_\beta \Arr \bigoplus_{\beta\in \BB}\Lie(\uB)_\beta,
\\\nonumber
 \bigoplus_{\beta\in \BB}\Lie(f^t)_\beta & \colon \bigoplus_{\beta\in
\BB}\Lie(\uB)_\beta \Arr \bigoplus_{\beta\in \BB}\Lie(\uA)_\beta.\label{Equation: Lie(A)}
\end{align}
We note that since $\Lie(\uA)$ is a free $\ol\otimes k$-module,
$\Lie(\uA)_\beta$ is a one dimensional $k$-vector space.  We let
\begin{align}
\varphi(\Qbar) & = \varphi(\uA, H)  = \{ \beta\in \BB\colon
\Lie(f)_{\sigma^{-1} \circ \beta} = 0\}, \notag\\
\eta(\Qbar) & = \eta(\uA, H)  = \{ \beta\in \BB\colon \Lie(f^t)_\beta =
0\}, \\
I(\Qbar) & = I(\uA, H) = \ell(\varphi(f)) \cap \eta(f) = \{ \beta\in
\BB\colon \Lie(f)_\beta = \Lie(f^t)_\beta= 0\}.\notag
\end{align}

Let $\pe$ be a pair of subsets of $\BB$. We say that $(\varphi,
\eta)$ is an \emph{admissible pair} if $\ell(\varphi^c) \subseteq \eta$.
There are $3^g$ admissible pairs and for any   $k$-rational point $\Qbar$ of $\Ybar$ the pair $(\varphi(\Qbar), \eta(\Qbar))$ is admissible.
Given another admissible pair $(\varphi', \eta')$, we say that
$(\varphi', \eta') \geq \pe$, if  $\varphi' \supseteq
\varphi$ and $\eta' \supseteq \eta$.

 In \cite{Stamm}  it was shown that for a closed point $\Qbar$ of $\Ybar$, defined
over a field $k \supseteq \kappa$, there is an isomorphism

\begin{equation}
\label{equation:local deformation ring at Q bar -- arithmetic version}
 \widehat{\calO}_{Y, \Qbar} \cong W(k) [\![ \{x_\beta,
 y_\beta: \beta \in I(\Qbar)\}, \{z_\beta: \beta \not\in I(\Qbar)\}]\!]/(\{x_\beta y_\beta - p: \beta\in
 I\}).
\end{equation}
inducing
\begin{equation}
\label{equation: local deformation ring at Q bar}
 \widehat{\calO}_{\Ybar, \Qbar} \cong k [\![ \{x_\beta,
 y_\beta: \beta \in I(\Qbar)\}, \{z_\beta: \beta \not\in I(\Qbar)\}]\!]/(\{x_\beta y_\beta: \beta\in
 I\}).
\end{equation}
The variables can be, and will always be, chosen such that the vanishing of $x_\beta$ cuts out the locus where $\beta\in \eta$, and the vanishing of $y_\beta$ cuts out the locus where $\beta\in \ell(\varphi)$.

 Let $\Pbar$ be a closed point of $\Xbar$ corresponding to $\uA$ defined over $k$ a prefect field containing $\kappa$. The
type of $\uA$ is defined by
\begin{equation} \tau(\uA) = \{\beta \in \BB: \DD\left(\Ker(\Fr_A)\cap
\Ker(\Ver_A)\right)_\beta\neq 0\},
\end{equation}where $\DD$ denotes the contravariant Dieudonn\'e module.

The relationship between the type and the $\pe$ invariants is explained in the following Lemma proven in \cite{GorenKassaei}, Corollary 2.3.4.

\begin{lem} \label{lemma: type and phi eta} Let $\Qbar$ be a closed point of $\Ybar$, and $\Pbar=\pi(\Qbar)$. We have
\[
\varphi(\Qbar) \cap \eta(\Qbar) \subseteq \tau(\Pbar)\subseteq  (\varphi(\Qbar) \cap \eta(\Qbar)) \cup (\varphi(\Qbar)^c \cap \eta(\Qbar)^c).
\]
 
\end{lem}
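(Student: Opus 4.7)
The plan is to reduce the statement to a pointwise semi-linear algebra calculation on the contravariant Dieudonn\'e module $M = \DD(A[p])$. This $M$ is a free $\ol \otimes_{\ZZ} k$-module of rank two, with Frobenius $F$ and Verschiebung $V$ satisfying $FV = VF = 0$, and decomposes as $M = \bigoplus_{\beta \in \BB} M_\beta$ with each $M_\beta$ a two-dimensional $k$-vector space; $F$ and $V$ shift the grading by $\sigma$ and $\sigma^{-1}$ respectively. The subgroup scheme $H \subset A[p]$ of rank $p^g$ corresponds to a sub-Dieudonn\'e module $N = \DD(A[p]/H) \subset M$ with each $N_\beta \subset M_\beta$ one-dimensional, and $N$ encodes the isogeny $f\colon\uA \to \uA/H$ and its dual $f^t$.

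The next step is to translate the three invariants into conditions on the quadruple $(M_\beta, N_\beta, F, V)$. Using the standard Dieudonn\'e-theoretic identifications of $\Lie(\uA)_\beta$ and $\Lie(\uB)_\beta$ with appropriate subquotients of $M$ and $M/N$, the condition $\beta \in \varphi(\Qbar)$, i.e.\ $\Lie(f)_{\sigma^{-1}\beta}=0$, becomes the statement that $V$ carries $M_\beta$ into $N_{\sigma^{-1}\beta}$. Analogously, $\beta \in \eta(\Qbar)$, i.e.\ $\Lie(f^t)_\beta = 0$, becomes a dual vanishing statement about $F$ on the $\beta$-component. On the other hand, since $\DD(\Ker \Fr_A \cap \Ker \Ver_A)\cong \Ker F \cap \Ker V \subset M$, the condition $\beta \in \tau(\Pbar)$ is precisely $(\Ker F \cap \Ker V)\cap M_\beta \neq 0$.

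Both containments then reduce to a short case analysis on each two-dimensional space $M_\beta$, subject to the constraint $FV = VF = 0$. If $\beta \in \varphi(\Qbar) \cap \eta(\Qbar)$, the two vanishing conditions together force the existence of a common nonzero element of $\Ker F$ and $\Ker V$ in $M_\beta$, yielding $\beta \in \tau(\Pbar)$; this proves the first inclusion. For the second inclusion it suffices to prove the contrapositive: if $\beta$ lies in the symmetric difference $\varphi(\Qbar) \bigtriangleup \eta(\Qbar)$, exactly one of the two vanishing conditions holds, and a direct calculation using $FV=0$ together with the surviving nonvanishing map forces $\Ker F \cap \Ker V$ to vanish on $M_\beta$, so $\beta \notin \tau(\Pbar)$.

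The main technical obstacle I anticipate is Step 2: selecting conventions (contravariant versus covariant Dieudonn\'e theory, the direction of the grading shift by $\sigma$, and the precise identification of $\Lie$ with a subquotient) so that the algebraic translations of $\varphi,\eta,\tau$ match the geometric definitions in \cite{GorenKassaei} and \cite{GorenOort} on the nose. Once the dictionary is fixed, the linear-algebra verification on each two-dimensional $M_\beta$ is elementary.
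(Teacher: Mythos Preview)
The paper does not give its own proof of this lemma; it simply records the statement and cites \cite{GorenKassaei}, Corollary~2.3.4. Your plan via the contravariant Dieudonn\'e module is exactly the natural approach and is, in essence, what is carried out in the cited reference: one sets up the dictionary between $(\varphi,\eta)$ and the position of the line $N_\beta$ relative to $\Ker F$ and $\Ker V$ inside the two-dimensional $M_\beta$, and between $\tau$ and the coincidence $(\Ker F)_\beta = (\Ker V)_\beta$, after which both inclusions are a two-line case check.

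Two small points worth tightening. First, in contravariant Dieudonn\'e theory the module of a \emph{subgroup} scheme is a \emph{quotient} of $M$, so $\DD(\Ker\Fr_A\cap\Ker\Ver_A)$ is naturally $M/(\Ima F+\Ima V)$, not literally $\Ker F\cap\Ker V$. Your identification becomes correct once you use that $A[p]$ is a truncated Barsotti--Tate group of level~1, so $\Ker F=\Ima V$ and $\Ker V=\Ima F$, and that both are lines in each $M_\beta$ (because $\Lie(A)$ is locally free of rank one over $\ol\otimes k$); then $(\Ima F+\Ima V)_\beta\neq M_\beta$ if and only if $(\Ker F\cap\Ker V)_\beta\neq 0$. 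Second, note the index shift in the definition of $\varphi$ (it involves $\Lie(f)_{\sigma^{-1}\circ\beta}$) versus $\eta$ (which involves $\Lie(f^t)_\beta$); when you write down the dictionary in Step~2, this shift is absorbed by the $\sigma$-semilinearity of $F$ and $V$, and you should check that the two conditions really do land on the \emph{same} $M_\beta$. You have already flagged this as the main hazard, and it is.
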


\

The Ekedahl-Oort stratification on $\Xbar$ was studied in \cite{GorenOort}. Let $\tau \subseteq \BB$.  There is a locally closed subset
$W_\tau$ of $\Xbar$ with the property that a closed point $\Pbar$ of
$\Xbar$ corresponding to $\uA$ belongs to $W_\tau$ if and only if
$\tau(\uA) = \tau$.  These subsets have the following properties:\begin{enumerate}
\item The collection $\{W_\tau: \tau \subseteq \BB\}$ is a
stratification of $\Xbar$ and $Z_\tau:=\overline{W}_\tau =
\bigcup_{\tau' \supseteq \tau} W_{\tau'}$.
\item Each $W_\tau$ is non-empty, regular, and equi-dimensional
of dimension $g - \sharp\; \tau$. \item The strata $\{W_\tau\}$
intersect transversally. In fact, let $\Pbar$ be a closed
$k$-rational point of $\Xbar$. There is a choice of isomorphism
\begin{equation}\label{equation: local def ring of X at P bar} \widehat{\calO}_{X, \Pbar} \cong
W(k)[\![t_\beta: \beta \in \BB ]\!],
\end{equation}
inducing
\begin{equation}\label{equation: local def ring at P bar} \widehat{\calO}_{\Xbar, \Pbar} \cong
k[\![t_\beta: \beta \in \BB ]\!],
\end{equation}
such  that for $\tau' \subseteq \tau(\Pbar)$, $W_{\tau'}$ (and $Z_{\tau'})$ are
given in $\Spf (\widehat{\calO}_{\Xbar, \Pbar})$ be the equations
$\{t_\beta = 0: \beta \in \tau'\}$.
\end{enumerate}

We now recall the Kottwitz-Rapoport stratification on $\Ybar$ which was studied in \cite{GorenKassaei}. For an admissible pair $\pe$ there is a locally closed subset $\Wpe$ of $\Ybar$ whose closed points are those $\Qbar$ in
$\Ybar$ with invariants $\pe$. Moreover, the subset
\[ \Zpe = \bigcup_{(\varphi', \eta') \geq \pe} W_{\varphi', \eta'}\]
is the Zariski closure of $\Wpe$. The following is Theorem 2.5.2 of \cite{GorenKassaei}.

\begin{thm}\label{theorem: fund'l facts about the stratificaiton of Ybar}
Let $\pe$ be an admissible pair.
\begin{enumerate}
\item $\Wpe$ is non-empty. The
collection $\{\Wpe\}$ is a stratification of $\Ybar$ by $3^g$
strata.
\item $\Wpe$ and $\Zpe$ are equidimensional, and
\[ \dim (\Wpe) = \dim(\Zpe) = 2g - (\sharp\; \varphi + \sharp \eta).\]
\item The irreducible components of $\Ybar$ are the irreducible
components of the strata $Z_{\varphi, \ell(\varphi^c)}$ for $\varphi
\subseteq \BB$.

\item We have $w(Z_{\varphi,\eta})=Z_{r(\eta),\ell(\varphi)}$. More generally, for any prime ideal $\gerp|p$, we have 
\[
\varphi(w_\gerp(\Qbar))=(\varphi(\Qbar) \cap (\BB-\BB_\gerp)) \cup (r(\eta(\Qbar)) \cap \BB_\gerp),
\]
\[
\eta(w_\gerp(\Qbar))=(\eta(\Qbar) \cap (\BB-\BB_\gerp)) \cup (\ell(\varphi(\Qbar)) \cap \BB_\gerp).
\]

\item Let $\Qbar$ be a closed point of $\Ybar$ with invariants
$\pe$, and $I = \ell(\varphi) \cap \eta$. For an admissible pair
$\peprime$, we have $\Qbar\in Z_{\varphi^\prime,\eta^\prime}$ if and only if we have:
\[ \varphi \supseteq \varphi' \supseteq \varphi - r(I), \qquad \eta \supseteq \eta' \supseteq \eta - I.\]
In that case, write $\varphi' = \varphi - J, \eta' = \eta - K$ (so that $\ell(J) \subseteq I, K \subseteq I$ and
$\ell(J) \cap K = \emptyset$). We have:
\[
\widehat{\calO}_{Z_{\varphi', \eta'}, \Qbar} = \widehat{\calO}_{\Ybar, \Qbar}/\calI,
\]
where $\calI = \left\langle \{x_\beta: \beta \in I - K\}, \{y_\gamma: \gamma \in I - \ell(J)\} \right\rangle.$
This implies that each stratum in the stratification $\{\Zpe\}$ is
non-singular.
\end{enumerate}
\end{thm}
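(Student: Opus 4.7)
The plan is to reduce every assertion to the explicit local description of the completed local ring at a closed point $\Qbar$ with invariants $\pe$ recorded in \eqref{equation: local deformation ring at Q bar}, together with the interpretation of the variables supplied by the paper: $\{x_\beta = 0\}$ cuts out the closed locus ``$\beta \in \eta$'', while $\{y_\beta = 0\}$ cuts out ``$\beta \in \ell(\varphi)$''. Consequently, any closed point $\Qbar'$ in a formal neighborhood of $\Qbar$ has invariants of the form $(\varphi - J, \eta - K)$, where $K = \{\beta \in I : x_\beta(\Qbar') \neq 0\}$ and $\ell(J) = \{\beta \in I : y_\beta(\Qbar') \neq 0\}$; the relation $x_\beta y_\beta = 0$ forces $\ell(J) \cap K = \emptyset$, and a short combinatorial check using the admissibility of $\pe$ shows that $(\varphi - J, \eta - K)$ is itself admissible.

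From this local picture, parts (2) and (5) follow directly. For (5), the closure $Z_{\varphi', \eta'}$ at $\Qbar$ (with $\varphi' = \varphi - J$, $\eta' = \eta - K$) corresponds to imposing $\varphi(\Qbar') \supseteq \varphi'$ and $\eta(\Qbar') \supseteq \eta'$, which translates into the vanishing of $y_\beta$ for $\beta \in I - \ell(J)$ and of $x_\beta$ for $\beta \in I - K$; after this quotient all remaining relations $x_\beta y_\beta = 0$ become trivial (thanks to $\ell(J) \cap K = \emptyset$), leaving a regular formal power series ring, which yields both the formula for $\widehat{\calO}_{Z_{\varphi', \eta'}, \Qbar}$ and the non-singularity of the stratum. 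Taking $J = K = \emptyset$ in particular gives $\widehat{\calO}_{W_{\varphi, \eta}, \Qbar} \cong k[\![z_\beta : \beta \notin I]\!]$, whence $\dim W_{\varphi, \eta} = g - |I|$; the admissibility identity $\ell(\varphi) \cup \eta = \BB$ (the contrapositive of $\ell(\varphi^c) \subseteq \eta$) lets me rewrite $|I| = |\varphi| + |\eta| - g$, yielding the dimension formula in (2). Part (3) follows by maximizing the dimension: $|\varphi| + |\eta| = g$ together with admissibility forces $\eta = \ell(\varphi^c)$, so every top-dimensional stratum is of the form $W_{\varphi, \ell(\varphi^c)}$, and since $\Ybar$ is covered by their closures the irreducible components of $\Ybar$ must be irreducible components of these $Z_{\varphi, \ell(\varphi^c)}$.

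For non-emptiness in (1), I would choose a closed point $\Qbar_0 \in \Ybar$ with $I(\Qbar_0) = \BB$: this is constructed by taking a superspecial $\uA$ in $\Xbar$ (whose existence is classical) and setting $H = \Ker(\mathrm{Fr}_A) \cap A[p]$, so that $f = \mathrm{Fr}$ and $f^t = \mathrm{Ver}$ both annihilate $\Lie$, giving invariants $(\BB, \BB)$. An elementary check shows that $(\varphi, \eta) \mapsto (\BB - \varphi, \BB - \eta)$ gives a bijection between admissible pairs and pairs $(J, K)$ of subsets of $\BB$ with $\ell(J) \cap K = \emptyset$, of which there are exactly $3^g$; combined with the local analysis above, each such pair produces a non-empty stratum meeting a formal neighborhood of $\Qbar_0$. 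The closure description $\Zpe = \bigcup_{(\varphi', \eta') \geq \pe} W_{\varphi', \eta'}$ falls out of the same local analysis together with the upper semi-continuity of $\varphi, \eta$. Finally, for (4), the morphism $w_\gerp$ replaces $H_\gerp$ by $A[\gerp]/H_\gerp$ and hence swaps $f_\gerp$ with $f_\gerp^t$ while leaving the other prime components untouched; since $\varphi \cap \BB_\gerp$ is determined by the vanishing of $\Lie(f_\gerp)_{\sigma^{-1}\beta}$ and $\eta \cap \BB_\gerp$ by that of $\Lie(f_\gerp^t)_\beta$, this swap, together with the $\sigma$-shift built into the definition of $\varphi$, yields the stated formulae. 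The main technical obstacle, in my view, is the construction of the starting point $\Qbar_0$: one must perform a concrete Dieudonn\'e-theoretic calculation on $E^g$ equipped with its $\ol$-action to verify that $\Ker(\mathrm{Fr}_A) \cap A[p]$ is indeed an isotropic $\ol$-stable subgroup scheme of rank $p^g$ with $(\varphi, \eta) = (\BB, \BB)$, since admissibility alone does not automatically produce such a point.
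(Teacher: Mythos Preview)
The paper does not supply its own proof of this theorem: it is quoted verbatim as ``Theorem~2.5.2 of \cite{GorenKassaei}'' and no argument is given here. So there is nothing in the present paper to compare your proposal against.

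That said, your outline is sound and is essentially the strategy one would expect the cited reference to follow. Your derivation of (5) and (2) from the Stamm local model \eqref{equation: local deformation ring at Q bar} and the stated interpretation of $x_\beta, y_\beta$ is correct, including the count $|I| = |\varphi| + |\eta| - g$ via $\ell(\varphi)\cup\eta = \BB$. The deduction of (3) by maximizing dimension is clean. For (1), anchoring the argument at a point with invariants $(\BB,\BB)$ and reading off all other strata from the local model is the right idea; note that the superspecial locus of $\Xbar$ is known to be non-empty (e.g.\ by \cite{GorenOort}), and for such $\uA$ the kernel of Frobenius is automatically $\ol$-stable, isotropic, and of rank $p^g$, so the ``technical obstacle'' you flag is in fact routine. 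Your argument for (4) is correct in spirit but slightly imprecise: applying $w_\gerp$ replaces the data $(f\colon \uA\to\uB)$ at $\gerp$ by $(f^t\colon \uB\to\uA)$ at $\gerp$ (up to the induced PEL structure), so the vanishing loci of $\Lie(f)_\beta$ and $\Lie(f^t)_\beta$ for $\beta\in\BB_\gerp$ are interchanged, and unwinding the $\sigma$-shift in the definition of $\varphi$ gives exactly the displayed formulae.
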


\begin{dfn}\label{Definition: horizontal
strata}   Let $\gert$ be an ideal of $\ol$ dividing $p$.  Let $\gert^\ast$ be defined via $\gert \gert^\ast =
p\ol$, and define $f_\gert = \sum_{\gerp\vert \gert} f_\gerp$. Let $\BB_\gert = \cup_{\gerp \vert \gert} \BB_\gerp$. The horizontal strata of $\Yrig$ are defined to be those of the form $Z_{\BB_\gert,\BB_{\gert^\ast}}$.
\end{dfn}

We recall the following result (Theorem 2.6.13 of \cite{GorenKassaei}) about the irreducible components of the strata $\Zpe$. This result rules out the existence of  irreducible components of strata which may be,  in some sense, ``isolated" without affecting the general combinatorics of the stratification. This is used in this article to prove connectedness of certain regions of $\Yrig$.

\begin{thm}\label{Theorem: C cap YF cap YV}
Let $C$ be an irreducible component of $\Zpe$. Then
\[C \cap Z_{\BB,\emptyset} \cap Z_{\emptyset,\BB} \neq
\emptyset.\]
\end{thm}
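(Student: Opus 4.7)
The plan is to prove the theorem by downward induction on the admissible pair $(\varphi,\eta)$ in the poset with maximum $(\BB,\BB)$, using the local description of the stratification furnished by part (5) of Theorem \ref{theorem: fund'l facts about the stratificaiton of Ybar}. The first step is to note that a point lies in $Z_{\BB,\emptyset}\cap Z_{\emptyset,\BB}$ if and only if its invariants satisfy $\varphi\supseteq\BB$ and $\eta\supseteq\BB$, so this intersection coincides with $W_{\BB,\BB}=Z_{\BB,\BB}$. The theorem therefore reduces to showing that every irreducible component $C$ of every stratum $Z_{\varphi,\eta}$ meets $W_{\BB,\BB}$, which I would establish by induction on $\dim Z_{\varphi,\eta}=2g-\#\varphi-\#\eta$.

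The base case $(\varphi,\eta)=(\BB,\BB)$ is immediate, since $Z_{\BB,\BB}=W_{\BB,\BB}$. For the inductive step, take $(\varphi,\eta)<(\BB,\BB)$ and an irreducible component $C$ of dimension $d$. Since $W_{\varphi,\eta}\subsetneq Z_{\varphi,\eta}$, one may pick a point $\Qbar\in C\setminus W_{\varphi,\eta}$ lying in $W_{\varphi_0,\eta_0}$ for some admissible $(\varphi_0,\eta_0)>(\varphi,\eta)$. I would then select $\beta_0 \in (\varphi_0-\varphi)\cup(\eta_0-\eta)$ and form the immediate successor $(\varphi'',\eta'')$ of $(\varphi,\eta)$ in the poset by adjoining $\beta_0$ to $\varphi$ or to $\eta$ accordingly. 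Admissibility is preserved in either case: enlarging $\varphi$ shrinks $\ell(\varphi^c)$, and enlarging $\eta$ only helps the containment $\ell(\varphi^c)\subseteq\eta$. By construction $(\varphi,\eta)<(\varphi'',\eta'')\leq(\varphi_0,\eta_0)$, and in particular $\Qbar\in Z_{\varphi'',\eta''}$.

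The technical heart of the argument---the step I expect to be the main obstacle---is the assertion that $Z_{\varphi'',\eta''}$ is cut out locally at $\Qbar$ inside $Z_{\varphi,\eta}$ by a single regular equation, i.e.\ is a Cartier divisor there. This should follow by a careful unwinding of part (5) of the theorem at $\Qbar$ (which has invariants $(\varphi_0,\eta_0)$) with $J=\varphi_0-\varphi$ and $K=\eta_0-\eta$: both $\widehat{\calO}_{Z_{\varphi,\eta},\Qbar}$ and $\widehat{\calO}_{Z_{\varphi'',\eta''},\Qbar}$ will turn out to be regular formal power series rings over the residue field, and the latter is obtained from the former by setting exactly one additional variable to zero---namely $y_{\ell\beta_0}$ if $\beta_0\in J$, or $x_{\beta_0}$ if $\beta_0\in K$. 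The bookkeeping with the shift operator $\ell$ and the disjointness condition $\ell(J)\cap K=\emptyset$ is the place where one has to be careful.

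Once this Cartier-divisor assertion is in place, the rest is formal: $C$ is not contained in the lower-dimensional $Z_{\varphi'',\eta''}$, so Krull's Hauptidealsatz yields $\dim(C\cap Z_{\varphi'',\eta''})\geq d-1$ in a neighborhood of $\Qbar$. The equidimensionality from part (2) of Theorem \ref{theorem: fund'l facts about the stratificaiton of Ybar} then forces any irreducible component $C'$ of $C\cap Z_{\varphi'',\eta''}$ through $\Qbar$ to be an irreducible component of $Z_{\varphi'',\eta''}$ itself. Applying the inductive hypothesis to $C'\subseteq C$ produces a point of $W_{\BB,\BB}$ in $C$, completing the induction.
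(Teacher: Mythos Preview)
Your reduction to showing that every irreducible component of every $Z_{\varphi,\eta}$ meets $W_{\BB,\BB}$ is correct, and the local Cartier-divisor computation via part~(5) of Theorem~\ref{theorem: fund'l facts about the stratificaiton of Ybar} together with the Hauptidealsatz step is sound. The problem lies at the very start of the inductive step. You write: ``Since $W_{\varphi,\eta}\subsetneq Z_{\varphi,\eta}$, one may pick a point $\Qbar\in C\setminus W_{\varphi,\eta}$.'' This inference is invalid: the proper inclusion $W_{\varphi,\eta}\subsetneq Z_{\varphi,\eta}$ only tells you that \emph{some} component of $Z_{\varphi,\eta}$ meets the boundary, not that the particular component $C$ does. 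Since $Z_{\varphi,\eta}$ is nonsingular, $C$ is a connected component, and there is no purely formal reason it must touch $\bigcup_{(\varphi',\eta')>(\varphi,\eta)}W_{\varphi',\eta'}$. In fact, the assertion $C\setminus W_{\varphi,\eta}\neq\emptyset$ is essentially the content of the theorem itself---as the paper remarks, the result ``rules out the existence of irreducible components of strata which may be, in some sense, `isolated'.'' Your argument is therefore circular: it assumes that the very phenomenon the theorem is meant to exclude does not occur, and then uses that assumption to drive the induction.

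Closing this gap requires input specific to the geometry of Hilbert modular varieties---for instance, an ampleness argument involving partial Hasse invariants on a compactification, or an analysis of how components of strata interact under $\pi\colon\Ybar\to\Xbar$. The present paper does not supply a proof but cites \cite{GorenKassaei}, Theorem~2.6.13; the argument there is not the purely local-combinatorial one you propose.
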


We end this section with Theorem 2.6.4 (3) of  \cite{GorenKassaei}.

\begin{thm}\label{Theorem: generic type}
On every irreducible component of $\Zpe$, the type is generically $\varphi \cap \eta$.
\end{thm}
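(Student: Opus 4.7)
The plan is to pass to completed local rings at a general point of an irreducible component and reformulate the statement as a non-vanishing assertion for pull-backs of the Ekedahl--Oort coordinates, which I would then verify via a Dieudonn\'e-module (equivalently, Breuil--Kisin) calculation on the universal deformation.

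\textbf{Setup and reduction.} I would begin with an arbitrary irreducible component $C$ of $\Zpe$. Since $\dim \Wpe=\dim \Zpe=2g-|\varphi|-|\eta|$ by Theorem \ref{theorem: fund'l facts about the stratificaiton of Ybar}(2), $\Wpe\cap C$ is open and dense in $C$; and the pre-image under $\pi$ of every closed Ekedahl--Oort stratum $Z_{\tau_0}\subseteq\Xbar$ is closed in $\Ybar$, so there is a well-defined generic type $\tau_{\rm gen}$ on $C$. Lemma \ref{lemma: type and phi eta} gives
\[
\varphi\cap\eta\;\subseteq\;\tau_{\rm gen}\;\subseteq\;(\varphi\cap\eta)\cup(\varphi^c\cap\eta^c),
\]
so it will remain to show that for every $\beta\in\varphi^c\cap\eta^c$ the locus $\{Q\in C\colon \beta\in\tau(\pi(Q))\}$ is a proper subset of $C$.

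\textbf{Localization.} Next I would fix a closed point $\Qbar\in\Wpe\cap C$ and set $\Pbar=\pi(\Qbar)$. With $I=\ell(\varphi)\cap\eta$, Theorem \ref{theorem: fund'l facts about the stratificaiton of Ybar}(5) applied to $(\varphi',\eta')=(\varphi,\eta)$ yields a presentation
\[
\widehat{\calO}_{\Zpe,\Qbar}\;\cong\;k[\![\,z_\gamma\colon\gamma\notin I\,]\!],
\]
a regular ring of dimension $2g-|\varphi|-|\eta|$. On the $X$-side, $\widehat{\calO}_{\Xbar,\Pbar}=k[\![\,t_\beta\colon\beta\in\BB\,]\!]$ with $Z_{\{\beta\}}$ cut out by $t_\beta=0$. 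For $\beta\in\varphi^c\cap\eta^c\subseteq\eta^c$ one has $\beta\notin I$, so $z_\beta$ really is an indeterminate, and the problem reduces to proving
\[
\pi^\ast(t_\beta)\neq 0\ \text{in}\ \widehat{\calO}_{\Zpe,\Qbar}\qquad\text{for every}\ \beta\in\varphi^c\cap\eta^c.
\]
Granted this, each $\pi^\ast(t_\beta)$ defines a proper closed formal subscheme of $\Spf\widehat{\calO}_{\Zpe,\Qbar}$, and a general point of $C$ avoids their finite union, yielding $\tau(\pi(Q))=\varphi\cap\eta$ at such a point.

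\textbf{Key local computation (principal obstacle).} The hard part will be to prove the displayed non-vanishing, which I would attack via a direct Dieudonn\'e-module (equivalently, Breuil--Kisin module) calculation on the universal deformation of $(\uA,H)$. Concretely, I would restrict to the formal line $z_\gamma=0$ for all $\gamma\neq\beta$ in $\Spf\widehat{\calO}_{\Zpe,\Qbar}$. Because $\beta\notin\varphi\cup\eta$, neither $\Lie(f)_{\sigma^{-1}\beta}$ nor $\Lie(f^t)_\beta$ vanishes identically along this line, so the universal $F$ and $V$ acquire non-trivial $\beta$-components at general points; tracking their specialization through the Ekedahl--Oort coordinate $t_\beta$ on $\Xbar$ should produce
\[
\pi^\ast(t_\beta)\;\equiv\;u\cdot z_\beta\pmod{(z_\gamma\colon\gamma\notin I)^2}
\]
for some unit $u$, whence $\pi^\ast(t_\beta)\neq 0$. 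This linear-order identification of $\pi^\ast(t_\beta)$ is the main technical obstacle: it refines the calculation producing the first containment in Lemma \ref{lemma: type and phi eta} (which only delivers $\pi^\ast(t_\beta)\in\mathfrak{m}$ for $\beta\in\varphi\cap\eta$), and is the place where the specific Dieudonn\'e- or Breuil--Kisin-theoretic input is needed; once it is in hand, the rest of the argument is formal.
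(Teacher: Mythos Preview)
The paper does not contain a proof of this theorem: it is quoted verbatim as Theorem 2.6.4(3) of \cite{GorenKassaei} and used as a black box. So there is no in-paper argument to compare your proposal against.

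That said, a couple of remarks on your outline. First, a small slip in the localization step: the description ``$Z_{\{\beta\}}$ cut out by $t_\beta=0$'' is only valid for $\beta\in\tau(\Pbar)$, per the paragraph following \eqref{equation: local def ring at P bar}. You should say explicitly that if $\beta\notin\tau(\Pbar)$ for your chosen $\Qbar$, then $\Pbar\notin Z_{\{\beta\}}$ and hence $C\not\subseteq\pi^{-1}(Z_{\{\beta\}})$ already; thus you may assume $\beta\in\tau(\Pbar)$ before invoking the coordinate $t_\beta$. Second, your ``key local computation'' is indeed the entire content of the theorem, and you have only indicated how you would attack it. The assertion $\pi^\ast(t_\beta)\equiv u\cdot z_\beta\pmod{\germ^2}$ for a unit $u$ is precisely what the deformation-theoretic analysis of $\pi^\ast$ in \cite{GorenKassaei} establishes (via the explicit description of $\pi^\ast$ on completed local rings coming from displays/Dieudonn\'e theory), and it does not follow from the qualitative statement of Lemma~\ref{lemma: type and phi eta} alone. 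Your plan is the right one and matches the strategy of \cite{GorenKassaei}, but as written it defers the substantive step rather than carrying it out.
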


 \subsection{$p$-adic Dissections}  Let $\CC_p$ be the completion of an algebraic closure of $\QQ_p$. It
has a valuation $\val:\CC_p \arr \QQ \cup \{\infty\}$ normalized so
that $\val(p)=1$. Define
\[
\nu(x)=\min\{\val(x),1\}.
\]

For a point $P\in\Xrig$ (respectively, $Q\in\Yrig$), we denote its specialization by $\Pbar$ (respectively, $\Qbar$). We recall the definition of valuation vectors for points on $\Xrig$ and $\Yrig$ given in \cite{GorenKassaei}. Let $P\in\Xrig$.  Let $D_P={\rm
sp}^{-1}(\Pbar)$. The parameters $t_\beta$ in (\ref{equation: local def ring of X at P bar}) are functions on $D_P$. We define $\nu_\gerX(P)=(\nu_\beta(P))_{\beta\in
\mathbb{B}}$, where the entries $\nu_\beta(P)$ are given by
\[
\nu_\beta(P)=\begin{cases} \nu(t_\beta(P)) & \beta \in
\tau(\overline{P}),\\ 0 & \beta \not\in \tau(\overline{P}).
\end{cases}
\]
The above definition
is independent of the choice of parameters as in (\ref{equation: local def ring of X at P bar}).   Similarly, for $Q\in\Yrig$, we can define
$\nu_\gerY(Q)=(\nu_\beta(Q))_{\beta\in \mathbb{B}}$, where
 \[
 \nu_\beta(Q)=\begin{cases}
1 & \beta \in \eta(\overline{Q})-I(\overline{Q})=\ell(\varphi(\Qbar))^c,\\
\nu(x_\beta(Q))&\beta \in I(\overline{Q}),\\
0&\beta \not\in \eta(\overline{Q}).
\end{cases}
\]
This
definition is independent of the choice of parameters as in  (\ref{equation: local deformation ring at Q bar}).
 \medskip

It is easy to see the following.
\begin{prop}\label{Proposition: valuations under w}
For any $Q \in \Yrig$, $\beta\in \BB_\gerp$, we have
$\nu_\beta(Q)+\nu_\beta(w_\gerp(Q))=1$. In particular, for any $\beta\in \BB$, we have $\nu_\beta(Q)+\nu_\beta(w(Q))=1$.
\end{prop}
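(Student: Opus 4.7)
The strategy is to split into cases according to the position of $\beta \in \BB_\gerp$ relative to the invariants $\varphi(\Qbar), \eta(\Qbar)$, then reduce the only non-trivial case to the local equation $x_\beta y_\beta = p$ of (\ref{equation:local deformation ring at Q bar -- arithmetic version}) together with the interpretation of how $w_\gerp$ acts on the local coordinates. First I would use Theorem~\ref{theorem: fund'l facts about the stratificaiton of Ybar}(4) to extract the following basic identities for $\beta \in \BB_\gerp$:
\[
\beta \in \eta(w_\gerp\Qbar) \iff \beta \in \ell(\varphi(\Qbar)), \qquad \beta \in \ell(\varphi(w_\gerp\Qbar)) \iff \beta \in \eta(\Qbar),
\]
from which one immediately reads off $I(w_\gerp\Qbar)\cap\BB_\gerp = I(\Qbar)\cap\BB_\gerp$, and moreover that, on $\BB_\gerp$, the involution $w_\gerp$ swaps the roles of $\eta$ and $\ell(\varphi)$.

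With these identities in hand, the case analysis is short. If $\beta \notin \eta(\Qbar)$, then $\nu_\beta(Q)=0$; by admissibility $\ell(\varphi(\Qbar)^c) \subseteq \eta(\Qbar)$, so $\beta \in \ell(\varphi(\Qbar))$, hence $\beta \in \eta(w_\gerp\Qbar)$, and since also $\beta \notin \eta(\Qbar) = \ell(\varphi(w_\gerp\Qbar))$ we get $\beta \in \eta(w_\gerp\Qbar) - I(w_\gerp\Qbar)$, so $\nu_\beta(w_\gerp(Q))=1$. The symmetric case $\beta \in \eta(\Qbar) - I(\Qbar)$ works identically.

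The heart of the proof is the case $\beta \in I(\Qbar) \cap \BB_\gerp$, where by the paragraph above also $\beta \in I(w_\gerp \Qbar)$, so both valuations are given by parameters of type $x_\beta$. Here I would argue that under the isomorphism $w_\gerp^{\#}\colon\widehat{\calO}_{Y,w_\gerp(\Qbar)}\xrightarrow{\sim}\widehat{\calO}_{Y,\Qbar}$ induced by the automorphism $w_\gerp$, the parameter $x_\beta$ at $w_\gerp(\Qbar)$ pulls back to a unit multiple of $y_\beta$ at $\Qbar$ (for $\beta \in \BB_\gerp$). Indeed, the normalization of the $x_\beta, y_\beta$ in (\ref{equation: local deformation ring at Q bar}) is precisely that $x_\beta=0$ cuts out $\{\beta \in \eta\}$ and $y_\beta = 0$ cuts out $\{\beta \in \ell(\varphi)\}$; since $w_\gerp$ swaps these two loci on $\BB_\gerp$ as shown above, the uniqueness (up to units) of such parameters forces $w_\gerp^{\#}(x_\beta) = u\, y_\beta$ for a unit $u$, and then the defining relation $x_\beta y_\beta = p$ from (\ref{equation:local deformation ring at Q bar -- arithmetic version}) yields $\nu(x_\beta(Q)) + \nu(x_\beta(w_\gerp(Q))) = \nu(x_\beta(Q)) + \nu(y_\beta(Q)) = \nu(p) = 1$, which is the desired identity.

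The statement for $w$ then follows because $w = \prod_\gerp w_\gerp$ and, for $\beta\in\BB_\gerp$, the action of $w_{\gerp'}$ ($\gerp'\neq \gerp$) leaves the $\BB_\gerp$-component of the invariants $\pe$ untouched and therefore does not alter $\nu_\beta$. The main obstacle, as indicated, is the identification $w_\gerp^{\#}(x_\beta) = u\,y_\beta$; this requires that the Stamm-type local description (\ref{equation:local deformation ring at Q bar -- arithmetic version}) be sufficiently canonical to detect the swap of the two divisors $\{\beta\in\eta\}$ and $\{\beta\in\ell(\varphi)\}$ up to units, which one verifies from the moduli interpretation of $w_\gerp$ on deformations (an Iwahori-level analogue of the Atkin-Lehner relation), exactly as in the arguments of \cite{GorenKassaei} that the formulas of Theorem~\ref{theorem: fund'l facts about the stratificaiton of Ybar}(4) are obtained from.
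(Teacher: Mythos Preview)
Your proof is correct and supplies exactly the argument the paper omits (the paper simply asserts ``It is easy to see the following'' and gives no proof). The case analysis via Theorem~\ref{theorem: fund'l facts about the stratificaiton of Ybar}(4), together with the identification $w_\gerp^{\#}(x_\beta)=u\,y_\beta$ and the relation $x_\beta y_\beta=p$, is precisely the intended route.
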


\begin{dfn}\label{Definition: total valuation}Let $Q\in \Yrig$. We define $\nu(Q)=\sum_{\beta\in\BB} \nu_\beta(Q)$. 
\end{dfn}

This valuation has values between $0$ and $g$. By the above proposition, we have $\nu(w(Q))=g-\nu(Q)$. For any interval $I \subset [0,g]$, there is an admissible open subset $\Yrig I$ whose points are $\{Q\in \Yrig: \nu(Q) \in I\}$.

\subsubsection{The valuation hypercube}

 Let $\Theta=[0,1]^\BB$ be the unit cube in $\RR^\BB$, whose $3^g$ ``open faces'' can be encoded by vectors ${\bf a}=(a_\beta)_{\beta\in\BB}$ such that
$a_\beta\in\{0,\ast,1\}$.  The face  corresponding to ${\bf a}$ is
the set
\[
\calF_{\bf a}:=\{ {\bf v}=(v_\beta)_{\beta\in\BB}\in \Theta: v_\beta=a_\beta\ {\rm if}\
a_\beta\neq\ast,\ {\rm and}\ 0<v_\beta<1\ {\rm otherwise}\}.
\]
We define  ${\rm Star}(\calF)=\cup_{\overline{\calF^\prime} \supseteq \calF}
\calF^\prime$, where the union is over all open faces $\calF^\prime$
whose topological closure contains $\calF$. For ${\bf a}$ as above,
we define
\begin{align*}
\eta({\bf a})&=\{\beta\in\BB: a_\beta\neq 0\},\\
I({\bf a})&=\{\beta\in\BB:a_\beta=\ast\},\\
\varphi({\bf a})&=r(\eta({\bf a})^c\cup I({\bf a}))=\{\beta\in\BB:
a_{\sigma^{-1}\circ\beta}\neq 1\}.
\end{align*}

\medskip

The following is proven in \cite{GorenKassaei}. It allows us to visualize various regions of $\Yrig$ using combinatorial data on the $g$-dimensional hypercube
as well as their specializations in terms of the strata $\Wpe$. This is important in understanding the process of analytic continuation.

\begin{thm} \label{theroem: theorem of cube} There is a  one-to-one correspondence  between the open faces of $\Theta$ and the strata
$\{W_{\varphi,\eta}\}$ of $\Ybar$, given by
\[
\calF_{\bf a} \mapsto
W_{\varphi({\bf a}),\eta({\bf a})}.
\]
It has the following properties:

\begin{enumerate}

\item  $\nu_\gerY(Q) \in \calF_{\bf a}$ if and only if $\Qbar\in W_{\varphi({\bf a}),\eta({\bf a})}$.

\item $\dim(W_{\varphi({\bf a}),\eta({\bf a})})=g-\dim(\calF_{\bf a})=\sharp\,\{\beta: a_\beta\neq\ast\}$.

\item The above correspondence is order-reversing: If $\calF_{\bf a}\subseteq\overline{\calF}_{\bf b}$, then $W_{\varphi({\bf b}),\eta({\bf b})}\subseteq\overline{W_{\varphi({\bf a}),\eta({\bf a})}}$ and vice versa. In particular,
$\nu_\gerY(Q)\in {\rm Star}(\calF_{\bf a}) \Longleftrightarrow \Qbar
\in Z_{\varphi({\bf a}),\eta({\bf a})}$.

\end{enumerate}
\end{thm}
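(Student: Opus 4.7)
The plan is to first establish the claimed bijection between labels ${\bf a} \in \{0,\ast,1\}^{\BB}$ and admissible pairs, then to derive all three assertions from the local description (2.7) together with the Kottwitz--Rapoport facts recalled in Theorem \ref{theorem: fund'l facts about the stratificaiton of Ybar}.

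For the bijection, I would unpack the admissibility condition $\ell(\varphi({\bf a})^c)\subseteq \eta({\bf a})$: since $\ell$ and $r$ are mutually inverse, $\ell(\varphi({\bf a}))=\eta({\bf a})^c\cup I({\bf a})$, and the condition reduces to the automatic $\eta({\bf a})\cap I({\bf a})^c \subseteq \eta({\bf a})$. To invert, given admissible $(\varphi,\eta)$ I would set $I=\ell(\varphi)\cap\eta$ and put $a_\beta=0$ for $\beta\notin\eta$, $a_\beta=\ast$ for $\beta\in I$, $a_\beta=1$ for $\beta\in\eta-I$. Admissibility forces $\eta^c\subseteq\ell(\varphi)$ and hence $\ell(\varphi)=\eta^c\cup I$, which shows that this really inverts the original map. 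Both sides have cardinality $3^g$.

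For property (1), take $\Qbar\in W_{\varphi,\eta}$ with $I=\ell(\varphi)\cap\eta=I(\Qbar)$. By (2.7) and the convention recorded after it, the parameters may be chosen so that $x_\beta$ cuts out $\{\beta\in\eta\}$ and $y_\beta$ cuts out $\{\beta\in\ell(\varphi)\}$ locally at $\Qbar$. For $Q\in\Yrig$ specializing to $\Qbar$, the definition of $\nu_\beta(Q)$ directly gives $\nu_\beta(Q)=1$ for $\beta\in\eta-I=\ell(\varphi)^c\cap \eta$, $\nu_\beta(Q)=0$ for $\beta\notin\eta$, and $\nu_\beta(Q)=\nu(x_\beta(Q))$ for $\beta\in I$. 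The point requiring care is the strict inequality $\nu(x_\beta(Q))\in(0,1)$ for $\beta\in I$: the arithmetic relation $x_\beta y_\beta=p$ in (2.7) yields $\val(x_\beta(Q))+\val(y_\beta(Q))=1$, and since $\Qbar$ lies in the common vanishing locus of $x_\beta$ and $y_\beta$ inside $\Ybar$, both valuations are strictly positive. This places $\nu_{\gerY}(Q)$ in the open face $\calF_{\bf a}$ attached to $(\varphi,\eta)$ by the bijection, which is exactly what (1) asserts.

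For (2) I would combine Theorem \ref{theorem: fund'l facts about the stratificaiton of Ybar}(2) with the count $\sharp\eta({\bf a})+\sharp\varphi({\bf a})=\sharp\{a_\beta\neq 0\}+\sharp\{a_\beta\neq 1\}=g+\sharp I({\bf a})$, which gives $\dim W_{\varphi({\bf a}),\eta({\bf a})}=2g-\sharp\varphi({\bf a})-\sharp\eta({\bf a})=g-\sharp I({\bf a})=g-\dim\calF_{\bf a}$. For (3) I would translate the closure relation on $\Theta$ combinatorially: $\calF_{\bf a}\subseteq\overline{\calF_{\bf b}}$ iff for each $\beta$ either $a_\beta=b_\beta$ or $b_\beta=\ast$, which unfolds to the conjunction $\eta({\bf a})\subseteq\eta({\bf b})$ and $\varphi({\bf a})\subseteq\varphi({\bf b})$; by the definition of $Z_{\varphi,\eta}$ this matches the closure order on strata. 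The ``in particular'' statement about $\mathrm{Star}(\calF_{\bf a})$ then follows by combining (1) with this order-reversing correspondence. The only real subtlety is the strictness in (1) — it is precisely this strict inequality that ensures the assignment lands in an \emph{open} face of $\Theta$ rather than a general one; the rest of the argument is a direct book-keeping exercise on the symbols $\{0,\ast,1\}$.
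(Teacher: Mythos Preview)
Your argument is correct. Note, however, that the paper does not actually prove this theorem: it is quoted verbatim as a background result from \cite{GorenKassaei}, so there is no ``paper's own proof'' against which to compare. What you have written is a clean self-contained verification built directly from the definitions of $\varphi({\bf a}),\eta({\bf a}),I({\bf a})$, the local model (2.6)--(2.7), and the dimension/closure facts recalled in Theorem~\ref{theorem: fund'l facts about the stratificaiton of Ybar}. The key technical point---that $\nu(x_\beta(Q))\in(0,1)$ for $\beta\in I(\Qbar)$, coming from $x_\beta y_\beta=p$ together with both coordinates lying in the maximal ideal---is exactly what is needed to land in the \emph{open} face, and you have identified it correctly. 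One small remark: in (1) you argue only the direction $\Qbar\in W_{\varphi({\bf a}),\eta({\bf a})}\Rightarrow \nu_{\gerY}(Q)\in\calF_{\bf a}$; the converse follows immediately since the open faces partition $\Theta$ and the strata partition $\Ybar$, but it would not hurt to say so explicitly.
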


We recall a result that provides the first step of our analytic continuation process (see Proposition \ref{Proposition: canonical analytic continuation})  . For $\gerp|p$, let
\begin{align*}
\calV_\gerp&:=\{Q\in
\Yrig:\nu_\beta(Q)+p\nu_{\sigma^{-1}\circ\beta}(Q)<p\quad \forall
\beta \in \BB_\gerp\},\\
\calW_\gerp&:=\{Q\in
\Yrig:\nu_\beta(Q)+p\nu_{\sigma^{-1}\circ\beta}(Q)>p\quad \forall
\beta \in \BB_\gerp\}.
\end{align*}
In \S 5.3 of \cite{GorenKassaei}, it is shown that these are admissible open sets and the following Lemma is proved.

\begin{lem}\label{lemma: valuations under pi}
Let $\gerp|p$ and $\beta\in\BB_\gerp$. Let $Q\in\Yrig$, and
$P=\pi(Q)$.
\begin{enumerate}
\item If $Q\in\calV_\gerp$ then $\nu_\beta(P)=\nu_\beta(Q)$.
\item If $Q\in\calW_\gerp$ then $\nu_\beta(P)=p(1-\nu_{\sigma^{-1}\circ\beta}(Q))$.
\end{enumerate}
\end{lem}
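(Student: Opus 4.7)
The plan is to prove the identities by a local computation at each closed point $\Qbar$ of $\Ybar$, using the explicit presentations \eqref{equation:local deformation ring at Q bar -- arithmetic version} and \eqref{equation: local def ring of X at P bar} of the completed local rings at $\Qbar$ and $\Pbar = \pi(\Qbar)$.

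First I would work out the map $\pi^\ast\colon \OhatP \to \OhatQ$ at the level of universal deformations. Using the Dieudonn\'e/display theory of $\uA$ with its $\ol$-action (or, equivalently, Breuil--Kisin modules along the lines of \cite{Hattori} and \cite{Tian}), I expect to obtain, for each $\beta \in I(\Qbar) \cap \BB_\gerp$, a formula of the shape
\[
\pi^\ast(t_\beta) \;=\; u_\beta\, x_\beta \,+\, v_\beta\, y_{\sigma^{-1}\circ\beta}^{\,p} \qquad (u_\beta, v_\beta \in \OhatQ^\times).
\]
The first summand records how deforming $H_\gerp$ in its $\beta$-component perturbs $\uA$, while the second comes from Frobenius acting on the deformation coordinate attached to $\uA/H_\gerp$ at $\sigma^{-1}\circ\beta$; the exponent $p$ is the imprint of this Frobenius twist. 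On the boundary, when $\beta \notin I(\Qbar)$ or $\sigma^{-1}\circ\beta \notin I(\Qbar)$, one of the two symbols $x_\beta$, $y_{\sigma^{-1}\circ\beta}$ is replaced by the appropriate $z$-variable or absorbed into a unit multiple of $p$ in the corresponding component of the universal ring, as dictated by Theorem \ref{theorem: fund'l facts about the stratificaiton of Ybar}(5).

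Granting this formula, the lemma becomes a standard ultrametric estimate. Using the relation $x_\beta y_\beta = p$ from \eqref{equation:local deformation ring at Q bar -- arithmetic version}, we have
\[
\nu(x_\beta(Q)) \;=\; \nu_\beta(Q), \qquad \nu\bigl(y_{\sigma^{-1}\circ\beta}^{\,p}(Q)\bigr) \;=\; p\bigl(1 - \nu_{\sigma^{-1}\circ\beta}(Q)\bigr).
\]
The defining inequality of $\calV_\gerp$ (resp.\ $\calW_\gerp$) is precisely the statement that the first of these valuations is strictly less than (resp.\ strictly greater than) the second. Since they are unequal throughout $\calV_\gerp \cup \calW_\gerp$, the valuation of the sum equals the minimum of the two, from which one reads off $\nu(t_\beta(P))$ as claimed. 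Lemma \ref{lemma: type and phi eta} guarantees that $\beta \in \tau(\Pbar)$ in each regime, so this coincides with $\nu_\beta(P)$. The remaining degenerate cases ($\beta \notin \eta(\Qbar)$, or $\sigma^{-1}\circ\beta$ lying in one of the three non-$I$ sub-strata) reduce either to the trivial identity $\nu_\beta(P) = \nu_\beta(Q) \in \{0,1\}$ or to the same estimate with a $z$-variable in place of $x_\beta$ or $y_{\sigma^{-1}\circ\beta}$, compatible with the stratification combinatorics of Theorem \ref{theroem: theorem of cube}.

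The main obstacle is the first step: pinning down the precise formula for $\pi^\ast(t_\beta)$. This demands a careful choice of compatible bases in the Breuil--Kisin modules of $\uA[\gerp]$ and of $H_\gerp$, a bookkeeping of the $\beta$-isotypic components of Frobenius and Verschiebung on the universal deformation, and a verification that the coordinates coming out of the display calculation match those on $X$ at $\Pbar$ up to units. Once this normalization is fixed, the two summands in the displayed formula correspond directly to the two ways in which a deformation of $(\uA, H_\gerp)$ can deform $\uA$ in the $\beta$-direction: by moving $H_\gerp$ (encoded by $x_\beta$), or through the Frobenius twist of the dual (encoded by $y_{\sigma^{-1}\circ\beta}^{\,p}$).
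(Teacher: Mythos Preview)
The paper does not actually prove this lemma; it merely quotes it from \cite[\S 5.3]{GorenKassaei}. So there is no in-paper proof to compare against. That said, your outline is exactly the strategy used in \cite{GorenKassaei}: one computes the map $\pi^\ast$ on completed local rings and obtains, for $\beta\in\tau(\Pbar)$, a formula of the shape
\[
\pi^\ast(t_\beta)=u_\beta\,x_\beta+v_\beta\,y_{\sigma^{-1}\circ\beta}^{\,p}\qquad(u_\beta,v_\beta\ \text{units}),
\]
with the expected degenerations when $\beta$ or $\sigma^{-1}\circ\beta$ falls outside $I(\Qbar)$. Once this is in hand, the ultrametric argument you give is precisely how the two cases of the lemma are read off, and the defining inequalities of $\calV_\gerp$ and $\calW_\gerp$ are indeed tailored to make the two summands have distinct valuations.

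Your self-diagnosis of the gap is accurate. The only substantive content is the derivation of the displayed formula, and you have not supplied it. In \cite{GorenKassaei} this is done not via Breuil--Kisin modules but via crystalline/display deformation theory: one trivializes the first crystalline cohomology of the universal abelian scheme over $\OhatQ$, writes Frobenius in matrix form with respect to a basis adapted to the Hodge filtration and to $H$, and identifies the parameters $t_\beta$ on $X$ with (units times) the partial Hasse invariants. The exponent $p$ on $y_{\sigma^{-1}\circ\beta}$ arises exactly as you say, from the Frobenius-semilinearity when passing from the $(\sigma^{-1}\circ\beta)$-component to the $\beta$-component. So your plan is correct; what remains is to execute the display computation, which is routine but requires care with the normalizations and with the boundary cases you flag.
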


Before ending this section, we recall a  useful rigid analytic Koecher principle which was proved in  Theorem 2 of \cite{Lutkebohmert} or Theorem 3.5 of \cite{Bartenwerfer}. This was first applied in the context of analytic continuation of automorphic forms by Pilloni in \cite{Pilloni} to prove automatic extension of overconvergent Siegel modular forms to points of bad reduction.

\begin{prop}\label{Proposition: Koecher}
Let $\gerZ$ be an admissible formal scheme with a normal special fibre $\overline{Z}$ and rigid analytic fibre $\gerZ_{\rm rig}$. Let $W$ be a closed subset of $\overline{Z}$ of codimension bigger than $1$. Then, any rigid analytic function on $\spe^{-1}(\overline{Z}-W)$ extends to a rigid analytic function on $\gerZ_{\rm rig}$.
\end{prop}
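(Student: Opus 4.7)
My plan is to reduce the rigid-analytic Koecher principle to the classical algebraic Koecher principle (depth-$2$ extension on a normal Noetherian scheme) and then lift through $p$-adic approximation; the essential technical input is the rigid-analytic boundedness statement worked out in \cite{Lutkebohmert, Bartenwerfer}.

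First, I would reduce to the affine case. Since $\gerZ$ is quasi-compact and $\overline{Z}$ is normal (in particular reduced), analytic continuation on $\gerZ_{\rm rig}$ is unique, so it suffices to extend the given rigid function $f$ over each of finitely many open formal affines $\gerZ_i = \Spf(A_i)$ covering $\gerZ$ and then to glue along overlaps using uniqueness. Fix one such chart and write $U_i = \spe^{-1}(\overline{Z}_i - W) \cap \gerZ_{i,\rm rig}$, where $\gerZ_{i,\rm rig} = \Spm(A_i[1/p])$; the restriction of $f$ to $U_i$ I continue to denote by $f$.

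The crux is to show that $f$ is bounded on $U_i$. Near each point of $W$, the codimension-$\geq 2$ hypothesis lets me pick two non-zero-divisors $g_1, g_2 \in A_i$ whose joint reduction cuts out $W$ locally. On each rational subdomain $\{|g_j| = 1\} \subset \gerZ_{i,\rm rig}$, the function $f$ lies in an affinoid algebra and is therefore bounded; covering $\overline{Z}_i$ by finitely many such rational pieces together with the preimages of the interior of $\overline{Z}_i - W$ then forces a uniform bound. After rescaling by a power of $p$ we may assume $|f|_{U_i} \leq 1$.

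Once $f$ is bounded, each reduction $f_n := f \bmod p^{n+1}$ is a section of $\scrO_{\gerZ_i / p^{n+1}}$ defined over the complement of $W$. Normality of $\overline{Z}$ gives Serre's condition $S_2$ on the mod-$p$ fibre, which combined with $\mathrm{codim}_{\overline{Z}_i} W \geq 2$ is the standard hypothesis of the algebraic Koecher principle; it extends each $f_n$ uniquely to a global section on $\gerZ_i / p^{n+1}$. The extensions are compatible across $n$ by uniqueness, so the inverse limit produces $\widetilde{f} \in A_i$, and the class $\widetilde{f}/1 \in A_i[1/p]$ is the sought-after extension of $f$. The main obstacle is the boundedness step: the codimension hypothesis is used precisely to rule out rigid-analytic analogues of essential singularities along $W$, and making this rigorous is the technical core of \cite{Lutkebohmert, Bartenwerfer}.
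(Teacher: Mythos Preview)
The paper does not actually prove this proposition; it is simply recalled as a known result, with the proof attributed to Theorem~2 of \cite{Lutkebohmert} and Theorem~3.5 of \cite{Bartenwerfer}. There is therefore no in-paper argument to compare against. Your outline --- reduce to the affine case, establish boundedness, then extend each $f_n$ over $\gerZ_i/p^{n+1}$ via algebraic Hartogs and pass to the inverse limit --- is a reasonable sketch of how such a proof goes, and you correctly isolate the boundedness step as the substantive content of the cited references.

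Two minor remarks on the sketch itself. First, a closed subset of codimension at least $2$ need not be locally cut out by two non-zero-divisors, so your covering argument for boundedness is only heuristic; but you already acknowledge that making this rigorous is precisely what \cite{Lutkebohmert, Bartenwerfer} do. Second, you invoke $S_2$ on the special fibre $\overline{Z}_i$ but then apply the extension to $f_n$, which lives on the thickening $\gerZ_i/p^{n+1}$; strictly you need $S_2$ on $A_i/p^{n+1}$. This does hold (the filtration of $A_i/p^{n+1}$ by powers of $p$ has successive quotients isomorphic to $A_i/p$ by flatness, and depth behaves well in such filtrations; alternatively one can lift inductively in $n$ using only $S_2$ on the special fibre), but it deserves a sentence.
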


\section{Kisin Modules and valuations}\label{Section: Kisin modules}
In the following, we show that for a point $Q=(\uA,C)$ of $\Yrig$,  the valuations $\nu_\beta(Q)$ can be read-off from the  Breuil-Kisin module of $C$.  Our references in this section are \cite{Breuil}, \cite{Kisin}, \cite{Savitt}.

 \subsection{Notation} Let $\calO_K$ be a finite extension of $W(\kappa)$ of residue field $\kappa^\prime$ and absolute ramification index $\nu_K(p)=e$. Let $\gerS_1$ denote the power series ring $\kappa^\prime[[u]]$ equipped with its canonical Frobenius morphism $\phi$ that sends $h(u) \mapsto (h(u))^p$. For a finite flat $p$-torsion group scheme $G$ over  $\calO_K$, we denote the Breuil-Kisin module associated to it by $\gerM(G)$, and its Frobenius morphism by $\Phi$.

 \subsection{Subgroups of Abelian Varietes}  Let $\uA$ correspond to a point $P$ on $\Xrig$, and assume that all points $Q\in \pi^{-1}(P)$ are  defined over $\calO_K$. Let $\gerM=\gerM(A[p])$, which is a free $\gerS_1$-module of rank $2g$ equipped with a Frobenius morphism $\Phi$. The $\ol$-action on $A$ induces an $\ol$-action on $\gerM$, and the induced $\ol\otimes W(\kappa^\prime)$-action on $\gerM$ allows us to decompose
\[
\gerM=\bigoplus_{\beta\in\BB}\  \gerM_\beta,
\]
where each $\gerM_\beta$ is a free $\gerS_1$-module of rank $2$, and $\Phi(\gerM_\beta) \subset \gerM_{\sigma\circ\beta}$. The subgroup schemes $C$ of $A[p]$ such that $(\uA,C)\in\Yrig$ are in bijection with free $\ol$-invariant Breuil-Kisin submodules $\gerC$ of $\gerM(A[p])$ of rank $p^g$, via
\[
C \mapsto \gerC:=\gerM(A[p]/C).
\]
The induced action of $\ol \otimes W(\kappa^\prime)$ on $\gerC$ produces a decomposition
\[
\gerC=\bigoplus_{\beta\in\BB} \gerC_\beta,
\]
where each $\gerC_\beta$ is a rank-one free submodule of $\gerM_\beta$. Let ${\bf e}_\beta$ denote a generator of $\gerC_\beta$, and assume
\[
\Phi({\bf e}_\beta)=a_\beta u^{r_\beta} {\bf e}_{\sigma\circ\beta},
\]
where $a_\beta$ is a unit in $\gerS_1$.

\begin{prop}\label{Proposition: valuations via Kisin modules} With the above notation, we have $\nu_\beta(\uA,C)=r_\beta/e$.
\end{prop}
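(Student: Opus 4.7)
The plan is a case analysis on how $\beta$ sits with respect to $\eta(\Qbar)$ and $I(\Qbar)$, combining the dictionary between Breuil-Kisin modules reduced mod $u$ and Dieudonn\'e modules (for the extremal cases) with a universal deformation-theoretic computation over $\widehat{\calO}_{\gerY,\Qbar}$ (for the generic case). The reduction $\gerC/u\gerC$ recovers, compatibly with the $\beta$-decomposition, the contravariant Dieudonn\'e module of $(A[p]/C)_{\kappa^\prime}$, with $\Phi\bmod u$ playing the role of its Frobenius; reading off the behaviour on the $\beta$-component pins down the extremal values $r_\beta\in\{0,e\}$.

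If $\beta\notin\eta(\Qbar)$, then $\Lie(f^t)_\beta\neq 0$ forces $\Phi\bmod u$ to be an isomorphism on the $\beta$-component of $\gerC/u\gerC$, giving $r_\beta=0$ and matching $\nu_\beta(Q)=0$. If $\beta\in\eta(\Qbar)-I(\Qbar)=\eta(\Qbar)\cap\ell(\varphi(\Qbar))^c$, then $\Lie(f^t)_\beta=0$ while $\Lie(f)_\beta\neq 0$, which (via the dual interpretation of $\Phi$ in terms of Verschiebung on the dual group scheme) forces $\Phi$ to vanish on $\gerC_\beta$ modulo $E(u)$, i.e.\ $r_\beta=e$, matching $\nu_\beta(Q)=1$. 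Both translations use only the standard Kisin-module-to-Dieudonn\'e dictionary together with the description of $\varphi(\Qbar),\eta(\Qbar)$ via $\Lie(f),\Lie(f^t)$ and Lemma~\ref{lemma: type and phi eta}.

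The main case is $\beta\in I(\Qbar)$, where both $\nu_\beta(Q)\in[0,1]$ and $r_\beta\in\{0,\ldots,e\}$ depend on the chosen lift. Here the plan is to spread the calculation out over the universal deformation ring \eqref{equation:local deformation ring at Q bar -- arithmetic version}: choose an $\ol$-equivariant basis of the universal Breuil-Kisin module $\gerM$ adapted to the universal $\gerC$, so that in the $\beta$-component one generator is ``on the canonical side'' (its Frobenius carrying a factor involving $y_\beta$) and the other is ``on the anticanonical side'' (its Frobenius carrying a factor involving $x_\beta$). The distinguished generator ${\bf e}_\beta$ of $\gerC_\beta$ is then read off, and its Frobenius, after specializing $u\mapsto\pi_K$ at the $\calO_K$-valued point $Q$ and using that $E(u)\equiv u^e\pmod p$ together with the relation $x_\beta y_\beta=p$, yields $r_\beta=e\cdot\val(x_\beta(Q))=e\cdot\nu_\beta(Q)$.

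The main obstacle is producing this normal form: choosing bases of the universal $\gerM$ and its rank-one $\ol$-invariant submodule $\gerC$ compatible with the local coordinates $x_\beta,y_\beta,z_\beta$ of \eqref{equation:local deformation ring at Q bar -- arithmetic version}. I plan to model the computation on the Breuil-Kisin calculations in \cite{Hattori}, used similarly by Tian in \cite{Tian} in the study of canonical subgroups, adapted here to the Iwahori-level setting. A useful self-consistency check is provided by Proposition~\ref{Proposition: valuations under w}: the analogous integer $s_\beta$ attached to the complementary Kisin module $\gerM/\gerC=\gerM(C)$ should satisfy $r_\beta+s_\beta=e$, matching $\nu_\beta(Q)+\nu_\beta(w(Q))=1$.
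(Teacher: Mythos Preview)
Your plan is not wrong in spirit, but it is considerably more involved than the paper's argument, and the main case is only sketched.  The paper avoids both the Dieudonn\'e translation and the universal normal-form computation by exiting the Kisin-module framework immediately via Savitt's explicit description of Raynaud $\kappa_\gerp$-vector space schemes.  Concretely: each $G[\gerp]:=(A[p]/C)[\gerp]$ is a one-dimensional Raynaud scheme, and \cite[Prop.~2.5]{Savitt} produces from $\gerM(G[\gerp])$ an explicit coordinate ring $\calO_K[T_\beta]/\langle T_\beta^p-d_\beta T_{\sigma\circ\beta}\rangle$ with $\nu_K(d_\beta)=e-r_\beta$.  From this one reads off directly that $(\omega_G)_\beta\cong \calO_K/d_\beta\calO_K$.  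On the other hand, since $G=\ker(f^t)$, one has $(\omega_G)_\beta=(\omega_{A/C})_\beta/(f^t)^*_\beta(\omega_A)_\beta$, and the proof of \cite[Thm.~2.4.1]{GorenKassaei} identifies this cokernel with $\calO_K/y_\beta(Q)\calO_K$ when $\beta\in I(\Qbar)$.  Combining gives $\nu_K(y_\beta(Q))=e-r_\beta$, whence $e\nu_\beta(Q)=e-\nu_K(y_\beta(Q))=r_\beta$ via $x_\beta y_\beta=p$.  The boundary cases $\beta\notin I(\Qbar)$ then fall out from the same $\omega_G$ identification, since either $(f^t)^*_\beta$ or $(f)^*_\beta$ is already an isomorphism mod $\germ_K$.

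What the paper's route buys is uniformity and the complete avoidance of your ``main obstacle'': there is no need to produce a normal form for the Kisin module over $\widehat{\calO}_{\gerY,\Qbar}$, because the entire computation happens at the single $\calO_K$-point through differentials of the Raynaud scheme.  Your Dieudonn\'e reduction for the extremal cases is correct in principle but needs the $\sigma$-shift bookkeeping done carefully (cf.\ the shift noted in the proof of Lemma~\ref{Lemma: Diedonne module via Kisin module}); and your proposed universal computation, while a reasonable strategy, is both harder to execute and unnecessary once one notices that Savitt's result hands you $(\omega_G)_\beta$ for free.
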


\begin{proof}

Let $Q=(\uA,C)$, $G=\uA[p]/C$, and $\gerM=\gerM(G)$, as above. We can write $G=\oplus_{\gerp|p} G[\gerp]$. Each $G[\gerp]$ is a Raynaud $\kappa_\gerp$-vector space scheme of dimension $1$. We have $\gerM(G[\gerp])=\oplus_{\beta \in \BB_\gerp} \gerM_\beta$.

By proposition 2.5 of \cite{Savitt}, the group scheme associated to $\gerM(G[\gerp])$, i.e., $G[\gerp]$, has coordinate ring isomorphic to $\calO_K[T_\beta: \beta \in \BB_\gerp]/\calI$, where $\calI$ is an ideal of the form $\langle T_\beta^p-d_\beta  T_{\sigma\circ\beta}: \beta \in \BB_\gerp\rangle$, where  $d_\beta \in \calO_K$, and $\nu_K(d_\beta)=e-r_\beta$. If we decompose $\omega_G=\oplus (\omega_G)_\beta$ according to the $\calO_L \otimes W(\kappa)$-action, we deduce that for all $\beta \in \BB$, $(\omega_G)_\beta$ is isomorphic to $\calO_K/d_\beta\calO_K$ as an $\calO_K$-module.

Now consider $(f^t)^*_\beta: (\omega_{\uA})_\beta \arr (\omega_{\uA/C})_\beta$ which is dual to $\Lie(f^t)_\beta$ defined in Equation \ref{Equation: Lie(A)}. The $\calO_K$-module $(\omega_G)_\beta$ can be identified with $(\omega_{\uA/C})_\beta/(f^t)^*_\beta(\omega_{\uA})_\beta$. Proof of Theorem 2.4.1. of \cite{GorenKassaei} shows that, if $\beta\in I(\Qbar)$,  this is isomorphic to $\calO_K/y_\beta(Q)\calO_K$, where $y_\beta$ appears in Equation \ref{equation:local deformation ring at Q bar -- arithmetic version}. It follows that $e\nu_\beta(Q)=e\nu(x_\beta(Q))=e-\nu_K(y_\beta(Q))=e-\nu_K(d_\beta)=r_\beta$.

It remains to treat the case $\beta \not \in I(\Qbar)=\ell(\varphi(\Qbar)) \cap \eta(\Qbar)$. If $\beta \not \in \ell(\varphi(\Qbar))$, by definition, $(f^*)_\beta$ is an isomorphism, and hence $r_\beta=e-\nu_K(d_\beta)=e$. It also follows from definition of valuations that $\nu_\beta(Q)=1$. The other remaining case follows similarly.
 \end{proof}

We fix such a subgroup scheme  $C$ of $A[p]$.  Let ${\bf w}_\beta\in \gerM_\beta$ be such that $\{{\bf e}_\beta,{\bf w}_\beta\}$ is  a basis for $\gerM_\beta$.  Noting that $\gerM(C)=\gerM((\uA/C)[p])/\gerM(\uA[p]/C)$, we can apply Proposition \ref{Proposition: valuations via Kisin modules} to $(\uA/C, A[p]/C)$ and invoke Proposition \ref{Proposition: valuations under w} to deduce that, for all $\beta \in \BB$, we have
\[
\Phi({\bf w}_\beta)=h_\beta {\bf e}_{\sigma\circ\beta} + b_\beta u^{e-r_\beta}{\bf w}_{\sigma\circ\beta},
\]
where $b_\beta \in \gerS_1^\times$, $h_\beta\in \gerS_1$, and $r_\beta=e\nu_\beta(\uA,C)$. If $(\uA,D)$ is  another point on $\Yrig$, it will also be defined over $\calO_K$ by our assumption, and we can write
\[
\gerD:=\gerM(\uA[p]/D)=\bigoplus_\beta \gerD_\beta \subset \bigoplus_\beta \gerM_\beta.
\]
By rescaling and modifying the choice of ${\bf w}_\beta$ if
necessary, we can write  $\gerD_\beta=\gerS_1({\bf
e}_\beta+\mu_\beta{\bf w}_\beta)$ for some $0 \neq \mu_\beta \in
\gerS_1$. If we set $s_\beta=e\nu_\beta(\uA,D)$, Proposition
\ref{Proposition: valuations via Kisin modules} implies
\[
\Phi({\bf e}_\beta+\mu_\beta{\bf w}_\beta)=c_\beta u^{s_\beta}({\bf e}_{\sigma\circ\beta}+\mu_{\sigma\circ\beta}{\bf w}_{\sigma\circ\beta}),
\]
where $c_\beta\in\gerS_1^\times$. From this, one can extract relations among $\mu_\beta$'s. We summarize these in the following proposition.

\begin{prop}\label{Proposition: Kisin Modules}
Let $(\uA,C)$ be a point in $\Yrig$ defined over $\calO_K$ with $\nu_\beta(\uA,C)=r_\beta/e$ for $\beta\in\BB$, where each $r_\beta$ is an integer between $0$ and $e$. There is a basis $\{{\bf e}_\beta,{\bf w}_\beta\}_{\beta\in\BB}$ for $\gerM(\uA[p])$ such that
\[
\Phi({\bf e}_\beta)=a_\beta u^{r_\beta} {\bf e}_{\sigma\circ\beta},
\]
\[
\Phi({\bf w}_\beta)=h_\beta {\bf e}_{\sigma\circ\beta} + b_\beta u^{e-r_\beta}{\bf w}_{\sigma\circ\beta},
\]
where $a_\beta,b_\beta\in \gerS_1^\times$, and $h_\beta\in\gerS_1$. All other points $(\uA,D)$ in $\Yrig$, if all defined over $\calO_K$, are in bijection with choices of $\{\mu_\beta \in \gerS_1-\{0\}: \beta \in \BB\}$ satisfying
\[
a_\beta u^{r_\beta}+\mu_\beta^p h_\beta=c_\beta u^{s_\beta},
\]
\[
b_\beta u^{e-r_\beta}\mu_\beta^p=c_\beta u^{s_\beta}\mu_{\sigma\circ\beta},
\]
for some choice of integers $0\leq s_\beta\leq e$, and $c_\beta \in \gerS_1^\times$,    for $\beta \in \BB$, via 
\[
(\uA,D) \mapsto \gerM(\uA[p]/D)=\oplus_{\beta \in \BB}\ \gerS_1({\bf e}_\beta+\mu_\beta{\bf w}_\beta).
\]
 Under this bijection, we have $\nu_\beta(\uA,D)=s_\beta/e$.
\end{prop}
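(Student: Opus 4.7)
The plan is to treat the proposition in two stages: first, the existence of the adapted basis $\{{\bf e}_\beta,{\bf w}_\beta\}$ of $\gerM$ with the stated Frobenius action; second, the parametrization of other points $(\uA,D) \in \Yrig$ by tuples $(\mu_\beta)$ with $\mu_\beta \neq 0$.

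Stage 1 is essentially already assembled in the paragraphs preceding the statement. To make it rigorous, I would take ${\bf e}_\beta$ to be a generator of the rank-one free $\gerS_1$-module $\gerC_\beta = \gerM(\uA[p]/C)_\beta$, so that the relation $\Phi({\bf e}_\beta)=a_\beta u^{r_\beta}{\bf e}_{\sigma\circ\beta}$ with $r_\beta = e\nu_\beta(\uA,C)$ is precisely the content of Proposition \ref{Proposition: valuations via Kisin modules}. To produce a complementary ${\bf w}_\beta$, first verify that $\gerC_\beta$ is a direct summand of $\gerM_\beta$ — this holds because $\gerM_\beta/\gerC_\beta = \gerM(C)_\beta$ is itself a free $\gerS_1$-module of rank one (being the Breuil--Kisin module of a Raynaud-type component at $\beta$). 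Extend to a basis, and derive the shape of $\Phi({\bf w}_\beta)$ by reapplying Proposition \ref{Proposition: valuations via Kisin modules} to the point $(\uA/C,\uA[p]/C) = w(\uA,C)$; Proposition \ref{Proposition: valuations under w} identifies its $\beta$-valuation as $1-\nu_\beta(\uA,C) = (e-r_\beta)/e$, forcing the coefficient of ${\bf w}_{\sigma\circ\beta}$ in $\Phi({\bf w}_\beta)$ to be a unit times $u^{e-r_\beta}$.

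For Stage 2, the idea is to pass through a concrete chart and then impose $\Phi$-stability. Given $(\uA,D) \in \Yrig$, each $\gerD_\beta$ is a saturated rank-one $\gerS_1$-submodule of $\gerM_\beta$. Writing a generator as $\alpha {\bf e}_\beta + \gamma {\bf w}_\beta$, saturation means $(\alpha,\gamma)$ is unimodular; the chart corresponding to $\alpha$ a unit lets us normalize uniquely to $\gerD_\beta = \gerS_1({\bf e}_\beta + \mu_\beta {\bf w}_\beta)$ with $\mu_\beta \in \gerS_1$. The condition $\mu_\beta \neq 0$ for every $\beta$ is then equivalent to $\gerD_\beta \neq \gerC_\beta$ at every $\beta$, i.e., $D$ differs from $C$ at every prime of $\ol$ above $p$. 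Direct computation gives
\[
\Phi({\bf e}_\beta+\mu_\beta{\bf w}_\beta) = (a_\beta u^{r_\beta}+\mu_\beta^p h_\beta)\,{\bf e}_{\sigma\circ\beta} + \mu_\beta^p b_\beta u^{e-r_\beta}\,{\bf w}_{\sigma\circ\beta},
\]
and $\Phi$-stability of $\gerD$ forces this to equal some scalar $\lambda_\beta \in \gerS_1$ times the generator ${\bf e}_{\sigma\circ\beta}+\mu_{\sigma\circ\beta}{\bf w}_{\sigma\circ\beta}$ of $\gerD_{\sigma\circ\beta}$, yielding the two displayed equations of the proposition. To identify $\lambda_\beta = c_\beta u^{s_\beta}$ with $c_\beta\in\gerS_1^\times$ and $s_\beta = e\nu_\beta(\uA,D)$, I would reapply Proposition \ref{Proposition: valuations via Kisin modules} now to the point $(\uA,D)$ itself, reading off $s_\beta$ as the valuation from the Frobenius action on the chosen Kisin generator. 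The converse — that any tuple $(\mu_\beta)$ satisfying the displayed equations defines a $\Phi$-stable $\gerD$, hence via the Kisin correspondence a subgroup scheme $D$ with $(\uA,D) \in \Yrig$ — is formal.

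The main obstacle I anticipate is not algebraic but notational: keeping the chart definition and normalization consistent across all $\beta$'s simultaneously, and verifying that the bijection between tuples $(\mu_\beta)_{\beta \in \BB}$ satisfying the relations and points $(\uA,D)$ with $D_\gerp \neq C_\gerp$ for all $\gerp | p$ is genuinely one-to-one. Once the framework is set up, everything reduces to two clean invocations of Proposition \ref{Proposition: valuations via Kisin modules} together with the transformation rule in Proposition \ref{Proposition: valuations under w}.
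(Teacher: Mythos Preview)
Your proposal is correct and follows essentially the same route as the paper: the proposition is really a summary of the discussion immediately preceding it, and both you and the paper build the adapted basis by applying Proposition~\ref{Proposition: valuations via Kisin modules} first to $(\uA,C)$ and then to $w(\uA,C)=(\uA/C,\uA[p]/C)$ via Proposition~\ref{Proposition: valuations under w}, and then parametrize other $D$'s by normalizing a generator of $\gerD_\beta$ and reading off the displayed relations from $\Phi$-stability together with one more application of Proposition~\ref{Proposition: valuations via Kisin modules}. The only cosmetic difference is that the paper phrases the normalization as ``rescaling and modifying the choice of ${\bf w}_\beta$ if necessary,'' whereas you phrase it as working in the chart where the ${\bf e}_\beta$-coefficient is a unit; these amount to the same thing.
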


The following results will be useful later on.

\begin{lem}\label{Lemma: valuation calculation} Let $\beta \in \BB_\gerp$. Let $m_\beta$ denote the $u$-adic valuation of $\mu_\beta \in \gerS_1-\{0\}$. We have 
\[
(p^{f_\gerp}-1)m_\beta=\sum_{i=0}^{f_\gerp-1} p^{f_\gerp-1-i} (r_{\sigma^i\circ\beta} + s_{\sigma^i\circ\beta} -e).
\]
\end{lem}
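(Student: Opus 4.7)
The plan is to extract an iterative relation on the $u$-adic valuations $m_\beta$ directly from the second of the two equations in Proposition \ref{Proposition: Kisin Modules}, and then telescope around the $\sigma$-orbit of $\beta$ inside $\BB_\gerp$.

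First I would look at the displayed relation
\[
b_\beta u^{e-r_\beta} \mu_\beta^p = c_\beta u^{s_\beta} \mu_{\sigma\circ\beta},
\]
where $b_\beta, c_\beta \in \gerS_1^\times$. Taking $u$-adic valuations on both sides (units contribute nothing) gives the single recursive identity
\[
(e - r_\beta) + p\, m_\beta = s_\beta + m_{\sigma\circ\beta},
\]
which I would rewrite as
\[
p\, m_\beta - m_{\sigma\circ\beta} = r_\beta + s_\beta - e.
\]

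Next I would iterate this relation around the Frobenius orbit. Writing $\beta_i := \sigma^i\circ\beta$, I multiply the $i$-th instance by $p^{f_\gerp - 1 - i}$ to obtain
\[
p^{f_\gerp - i}\, m_{\beta_i} - p^{f_\gerp-1-i}\, m_{\beta_{i+1}} = p^{f_\gerp-1-i}\bigl(r_{\beta_i} + s_{\beta_i} - e\bigr),
\]
and sum for $i = 0, 1, \dots, f_\gerp - 1$. The left-hand side telescopes to $p^{f_\gerp}\, m_{\beta_0} - m_{\beta_{f_\gerp}}$, and since $\sigma$ acts on $\BB_\gerp$ with order exactly $f_\gerp$, we have $\beta_{f_\gerp} = \beta_0 = \beta$. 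This yields $(p^{f_\gerp}-1)\, m_\beta$ on the left and the required sum on the right, proving the lemma.

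There is really no main obstacle: the only subtlety is the well-definedness of $m_\beta$, which is guaranteed by the hypothesis $\mu_\beta \neq 0$ (so $m_\beta \in \ZZ_{\geq 0}$ is finite), together with the fact that $\mu_{\sigma\circ\beta}$ is nonzero whenever $\mu_\beta$ is (visible from the same equation, since the left-hand side is then nonzero). The first equation of Proposition \ref{Proposition: Kisin Modules} is not needed for this valuation identity.
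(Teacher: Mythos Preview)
Your proof is correct and is precisely the ``simple calculation from Proposition \ref{Proposition: Kisin Modules}'' that the paper invokes without details: taking $u$-adic valuations of the relation $b_\beta u^{e-r_\beta}\mu_\beta^p = c_\beta u^{s_\beta}\mu_{\sigma\circ\beta}$ and telescoping around the $\sigma$-orbit in $\BB_\gerp$ is the intended argument.
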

 
 \begin{proof}
This follows by a simple calculation from Proposition \ref{Proposition: Kisin Modules}.
\end{proof}

\begin{cor}\label{Corollary: inequalities} Let $\beta \in \BB_\gerp$. With the above notation, we have
\[
\sum_{i=0}^{f_\gerp-1} p^{f_\gerp-1-i} (\nu_{\sigma^i\circ\beta}(\uA,C)) \geq \sum_{i=0}^{f_\gerp-1} p^{f_\gerp-1-i} (1-\nu_{\sigma^i\circ\beta}(\uA,D)).
\]
\end{cor}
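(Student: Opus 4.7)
The plan is to derive the corollary as an immediate consequence of Lemma \ref{Lemma: valuation calculation} together with the observation that, since $\mu_\beta \in \gerS_1 = \kappa'[[u]]$, its $u$-adic valuation satisfies $m_\beta \geq 0$.

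More precisely, I would proceed as follows. First, recall from Proposition \ref{Proposition: Kisin Modules} that $r_\beta = e \cdot \nu_\beta(\uA,C)$ and $s_\beta = e \cdot \nu_\beta(\uA,D)$, both of which are integers in $[0,e]$. Next, apply Lemma \ref{Lemma: valuation calculation} to rewrite
\[
(p^{f_\gerp}-1)\, m_\beta \;=\; \sum_{i=0}^{f_\gerp-1} p^{f_\gerp-1-i}\bigl(r_{\sigma^i\circ\beta} + s_{\sigma^i\circ\beta} - e\bigr).
\]
Since the left-hand side is non-negative (as $m_\beta \geq 0$ and $p^{f_\gerp}-1 > 0$), the right-hand side is non-negative as well. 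Dividing through by $e$ and substituting the expressions for $r_{\sigma^i\circ\beta}/e$ and $s_{\sigma^i\circ\beta}/e$ in terms of the valuations yields
\[
\sum_{i=0}^{f_\gerp-1} p^{f_\gerp-1-i}\bigl(\nu_{\sigma^i\circ\beta}(\uA,C) + \nu_{\sigma^i\circ\beta}(\uA,D) - 1\bigr) \;\geq\; 0,
\]
which is exactly the claimed inequality after moving the $\nu_{\sigma^i\circ\beta}(\uA,D)$ and constant terms to the right-hand side.

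There is no real obstacle here; the content of the corollary is packaged in the preceding lemma, and the only additional input is the non-negativity $m_\beta \geq 0$, which is automatic from $\mu_\beta$ being an element (rather than a Laurent series) in $\gerS_1$. The one sanity check I would perform is to verify that the hypothesis $\mu_\beta \neq 0$ (needed for $m_\beta$ to be defined as a finite integer) is in place—this is precisely the condition under which Lemma \ref{Lemma: valuation calculation} is stated, and it parametrizes the points $(\uA,D) \in \Yrig$ distinct from $(\uA,C)$ in the relevant sense.
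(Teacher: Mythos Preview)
Your proposal is correct and follows exactly the paper's own argument: apply Lemma~\ref{Lemma: valuation calculation} and use $m_\beta \geq 0$. The additional details you spell out (dividing by $e$ and rearranging terms) are straightforward and implicit in the paper's one-line proof.
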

\begin{proof} Let $m_\beta$ denote the $u$-adic valuation of $\mu_\beta \in \gerS_1-\{0\}$. The result follows from Lemma \ref{Lemma: valuation calculation}, and the fact that $m_\beta \geq 0$.
\end{proof}

\begin{rmk} These results can be stated in terms of the notion of {\it degree} of a finite flat group schemes as studied in \cite{Fargues}. In fact, for every finite flat group scheme $C$ corresponding to a point  $Q\in \Yrig$, this notion can be refined to define partial degrees ${\deg}_\beta(C) \in [0,1]\cap \QQ$, for every $\beta \in \BB$  (See \cite[Def 3.6]{Tian}). One can then show that $\nu_\beta(Q)=1-{\deg}_\beta(C)$. Cast in this langauge, Corollary \ref{Corollary: inequalities} has been proven in both \cite{Tian} and \cite{PilloniStroh}.
\end{rmk}

\begin{lem}\label{Lemma: Diedonne module via Kisin module}  If $0<\nu_\beta(\uA,C)<1$, then $h_\beta \in \gerS_1^\times$.
\end{lem}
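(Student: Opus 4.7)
The plan is to reduce the Kisin-module data of Proposition \ref{Proposition: Kisin Modules} modulo $u$ and translate $h_\beta\in\gerS_1^\times$ into a rank-one assertion for the Frobenius on the Dieudonn\'e module of $\bar A[p]$. Setting $r_\beta=e\,\nu_\beta(\uA,C)$, the hypothesis gives $0<r_\beta<e$, so both $u^{r_\beta}$ and $u^{e-r_\beta}$ vanish modulo $u$. In the basis $\{\bar{\bf e}_\beta,\bar{\bf w}_\beta\}$ of the $\beta$-component of $\gerM(\uA[p])/u\gerM(\uA[p])$, the induced map $\bar\Phi$ thus has matrix
\[
\begin{pmatrix} 0 & h_\beta(0) \\ 0 & 0 \end{pmatrix}.
\]
A direct check shows that the only remaining basis freedom, ${\bf w}_\beta\mapsto{\bf w}_\beta+\lambda_\beta{\bf e}_\beta$ (once ${\bf e}_\beta$ is fixed as a generator of $\gerC_\beta$), modifies $h_\beta$ only by elements of $u\gerS_1$ when $0<r_\beta<e$. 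Hence the non-vanishing of $h_\beta(0)\in\kappa'$ is a well-defined invariant of $(\uA,C)$, and the Lemma is equivalent to its non-vanishing.

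I would then invoke the standard identification $\gerM(\uA[p])/u\gerM(\uA[p])\cong\DD(\bar A[p])$ as contravariant Dieudonn\'e modules, compatibly with the $\ol\otimes\kappa'$-decomposition, under which $\bar\Phi$ corresponds to the Dieudonn\'e Frobenius $F\colon\DD(\bar A[p])_\gamma\to\DD(\bar A[p])_{\sigma\circ\gamma}$. The matrix above shows that the $\kappa'$-rank of $F$ on the $\beta$-component is at most one, and the Lemma amounts to excluding rank zero. For this, note that $\Ker F_{\bar A}\subset\bar A$ is a finite flat $\ol$-subgroup scheme of order $p^g$ with cotangent space $\omega_{\Ker F_{\bar A}}\cong\omega_{\bar A}$ (since $p\geq 2$ and $F_{\bar A}$ has zero differential). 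The polarization of $\uA$ together with the unramifiedness of $p$ in $L$ makes $\omega_{\bar A}$ free of rank one over $\ol\otimes\kappa'$, so its $\beta$-component is one-dimensional for every $\beta\in\BB$. Since $F_{\bar A}$ vanishes on $\Ker F_{\bar A}$, the standard relation $\omega_{\Ker F_{\bar A}}\cong\DD(\Ker F_{\bar A})$ gives one-dimensional $\beta$-components for $\DD(\Ker F_{\bar A})$, and the presentation $\DD(\Ker F_{\bar A})\cong\DD(\bar A[p])/\Ima(F)$ (from the exact sequence $0\to\Ker F_{\bar A}\to\bar A\to\bar A^{(p)}\to 0$) yields $\dim_{\kappa'}F(\DD(\bar A[p])_\gamma)=1$ for every $\gamma\in\BB$. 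In particular, $\rank(F|_\beta)=1$, which forces $h_\beta(0)\ne 0$.

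The main obstacle I anticipate is bookkeeping with Dieudonn\'e conventions: matching the Kisin-module Frobenius with the correct operator (Frobenius vs.\ Verschiebung, covariant vs.\ contravariant) on $\DD(\bar A[p])$, and the precise presentation of $\omega_G$ as a quotient of $\DD(G)$ for $G$ an infinitesimal group scheme of height one. Once these conventions are consistently fixed, both the matrix reduction and the one-dimensionality of $\DD(\Ker F_{\bar A})_\beta$ are straightforward; the essential input is the $\ol\otimes\kappa'$-freeness of $\omega_{\bar A}$, which is where the unramifiedness of $p$ in $L$ really enters.
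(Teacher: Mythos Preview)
Your proposal is correct and follows essentially the same route as the paper: reduce $\gerM(\uA[p])$ modulo $u$, identify the result with the Dieudonn\'e module of $\bar A[p]$ so that $\bar\Phi$ becomes the Frobenius $F$, and observe that $h_\beta(0)=0$ would force $F$ to vanish on a full two-dimensional $\beta$-component, contradicting the fact that each $F_\gamma$ has one-dimensional kernel. The paper states this last fact directly, while you unpack it via $\DD(\Ker F_{\bar A})\cong\DD(\bar A[p])/\Ima(F)$ and the $\ol\otimes\kappa'$-freeness of $\omega_{\bar A}$; the paper also makes explicit the $\sigma^{-1}$-shift in the identification $\DD_\beta\cong(\gerM_\beta/u\gerM_\beta)^{\sigma^{-1}}$ (citing \cite{BrinonConrad}), which is exactly the convention issue you flagged as a potential obstacle.
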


\begin{proof} Let $\gerM=\oplus_{\beta\in\BB} \gerM_\beta$ denote $\gerM(\uA[p])$. Let $\DD=\oplus_{\beta\in\BB} \DD_\beta$ denote the Dieudonn\'e module
 of the reduction of $\uA[p]$, and $F$ be the Frobenius.  Then $\DD_\beta$ is isomorphic to the shift by $\sigma^{-1}$ of $\gerM_\beta/u\gerM_\beta$, where
   $F$ matches with $(\Phi\ {\rm mod}\ u)$ (see, for instance, Example 12.2.3 of \cite{BrinonConrad} ). If $h_\beta$ is not a unit, then, using
    Proposition  \ref{Proposition: Kisin Modules}, we see that $F_{\sigma\circ\beta}: \DD_{\sigma\circ\beta} \arr \DD_{\sigma^2\circ\beta}$ must be
     the zero morphism which is impossible since $\Ker(F_{\sigma\circ\beta})$ is $1$-dimensional.

\end{proof}
\section{Overconvergent Hilbert Modular Forms}\label{Section: overconvergent}


Let $X_{\QQ_\kappa}$, $Y_{\QQ_\kappa}$ denote, respectively, the base extensions of $X$, $Y$ to $\QQ_\kappa$, as well as the rigid analytic varieties associated to  them.

Let $\tilde{X}$, $\tilde{Y}$, $\tilde{\overline{X}}$, $\tilde{\overline{Y}}$ denote, respectively, toroidal compactifications of $X$, $Y$, $\Xbar$, $\Ybar$ based on a common fixed choice of rational polyhedral cone decompositions (one for each representative of $cl^+(F)$). Let $\tilde{\gerX}$, $\tilde{\gerY}$ denote the completions of $\tilde{X}$, $\tilde{Y}$ along their special fibres, and $\tXrig$, $\tYrig$ their associated rigid analytic varieties in the sense of Raynaud. We have inclusions $\Yrig \subset Y_{\QQ_\kappa} \subset \tYrig$, and similarly for $X$.

For any admissible $(\varphi,\eta)$,  define $\tilde{Z}_{\varphi,\eta}$ to be the Zariski closure of $Z_{\varphi,\eta}$ in $\tilde{\overline{Y}}$. Define 
\[
\tilde{W}_{\varphi,\eta}=\tilde{Z}_{\varphi,\eta} -  \bigcup_{(\varphi^\prime,\eta^\prime)>(\varphi,\eta)}{Z_{\varphi^\prime,\eta^\prime}}.
\]
Let $I\subseteq [0,g]$ be an interval. We define 
\[
\tYrig I=\Yrig I \cup \bigcup_{\gert|p,f_{\gert^\ast} \in I}  \spe^{-1}(\tilde{W}_{\BB_\gert,\BB_{\gert^\ast}}),
\]
where, $\BB_\gert$, $\BB_{\gert^\ast}$, $f_{\gert^\ast}$ are given in Definition \ref{Definition: horizontal strata}, and $\Yrig I$ is introduced after Definition \ref{Definition: total valuation}. We also define $Y_{\QQ_\kappa} I=\tYrig I \cap Y_{\QQ_\kappa}$. For any open subdomain $\calW$ of $\tYrig$, we define $\calW I=\calW \cap \tYrig I$. A similar notation is used for $Y_{\QQ_\kappa}$.

We also define auxiliary modular varieties as follows. Let $\gerp|p$ be a prime ideal of $\ol$.  Let $Y^{\gerp}_{\QQ_\kappa}$ be the scheme which classifies triples $(\uA,C,D)$ over $\QQ_\kappa$-schemes, where $(\uA,C)$ is classified by $Y_{\QQ_\kappa}$, $D$ is classified by $Y(\gerp)_{\QQ_\kappa}$, and $C\cap D=0$. We let $\tYrigp$ denote the toroidal compactification of $Y^\gerp_{\QQ_\kappa}$, obtained using our fixed choice of collection of cone decompositions. We use the same notation to denote the associated rigid analytic variety.  There are  two finite flat morphisms
\[
\pi_{1,\gerp},\pi_{2,\gerp}:\tYrigp \arr \tYrig,
\]
defined by
$\pi_{1,\gerp}(\uA,C,D)=(\uA,C)$ and $\pi_{2,\gerp}(\uA,C,D)=(\uA/D,C+D/D)$ on the non-cuspidal part. When $p$ is inert in $\ol$, we simply denote these maps by 
\[
\pi_1,\pi_2: \tilde{\gerY}_{\rm{rig}}^{(p)} \arr \tYrig.
\]

Let $\epsilon: \uA^{\rm univ} \arr X$ be the universal abelian scheme.
The Hodge bundle $\omega = \epsilon_\ast \Omega^1_{\uA^{\rm
univ}/X}$ is a locally free sheaf of
$\ol\otimes\calO_{X}$-modules and decomposes as a direct sum of line bundles
\[ \omega = \oplus_\beta\  \omega_\beta.
\] Let $\underline{k}=\{k_\beta:\beta \in \BB\}$ be a multiset of integers. We define
\[
\omega^{\underline{k}}=\otimes_{_\beta} \ \omega_\beta^{k_\beta}.
\]
We continue to denote by $\omega^{\underline{k}}$ the pullback of $\omega^{\underline{k}}$ to $Y$ under $\pi$. The locally free sheaf $\omega^{\underline{k}}$ extends to a locally free sheaf on $\tilde{X}$ (and similarly for $Y$), and hence to $\tXrig$ (and similarly $\tYrig$). Let $\omega^{\underline{k}}$ also denote the pullback of $\omega^{\underline{k}}$ to any $\tYrigp$ under $\pi_1$.

An overconvergent Hilbert modular form $f$ of weight $\underline{k}$ and level $\Gamma_{00}(N)$  is an element of the following space:
\[
\calM_{\underline{k}}^\dagger=\varinjlim_{r \in \QQ^{>0}}  H^0(\tYrig[0,r],\omega^{\underline{k}}).
\]
The subspace of classical modular forms is defined as
\[
\calM_{\underline{k}}=H^0(\tYrig,\omega^{\underline{k}}),
\]
where the inclusion $\calM_{\underline{k}} \arr \calM_{\underline{k}}^\dagger$ is the natural map induced by restriction.
\begin{rmk} Using Rapoport's work \cite{Rapoport}, one can show that for each $r<1$, all the natural maps
\[
H^0(\tYrig[0,r],\omega^{\underline{k}}) \arr H^0(Y_{\QQ_\kappa}[0,r],\omega^{\underline{k}}) \arr H^0(\Yrig[0,r],\omega^{\underline{k}})
\]
are isomorphisms, if $L \neq \QQ$. See Lemma 4.1.4 of  \cite{KisinLai} for details. This implies that we can define overconvergent Hilbert modular forms using any of the three rigid analytic varieties $\Yrig$, $Y_{\QQ_\kappa}$, and $\tYrig$.
\end{rmk}

There is a full Hecke algebra acting on $\calM_{\underline{k}}^\dagger$ preserving $\calM_{\underline{k}}$. In \S \ref{Section: U_p}, we will briefly recall the geometric construction of the $U$-operators at prime ideals dividing $p$.

\section{Analytic continuation}

In this section, we show that every overconvergent Hilbert modular form of finite slope extends automatically to a  region of $\Yrig$ which is large enough for our arguments.

\subsection{The $U_\gerp$ operators} \label{Section: U_p}

Recall  the standard construction of overconvergent correspondences explained in Definition 2.19 of \cite{Kassaeiclassicality}: If $\gerp |p$, and we have  $\pi_{1,\gerp}^{-1}({\calU_1})\subset \pi_{2,\gerp}^{-1}(\calU_2)$ for admissible opens $\calU_1,\calU_2$ of $\tYrig$, we can define an operator
\[
U_{\gerp}: \omega^{\underline{k}}(\calU_2)\arr \omega^{\underline{k}}(\calU_1),
\]
via
\[
U_{\gerp}(f)=(1/p^{f_\gerp}) (\pi_{1,\gerp})_{*}(res({\rm pr}^*\pi_{2,\gerp}^*(f))),
\]
where $res$ is restriction from $\pi_{2,\gerp}^{-1}(\calU_2)$ to $\pi_{1,\gerp}^{-1}({\calU_1})$, and ${\rm pr}^*: \pi_{2,\gerp}^* \omega^{\underline{k}} \arr \pi_{1,\gerp}^* \omega^{\underline{k}} $ is a morphism of sheaves on $\tYrigp$, which at $(\uA,C,D)$ is induced by ${\rm pr}^*: \Omega_{A/D} \arr \Omega_A$, the pullback map of differentials under the natural projection ${\rm pr}: A \arr A/D$.

\begin{dfn}
The $U_\gerp$ operator on $\calM_{\underline{k}}^\dagger$ is obtained as above, using the fact that, by Lemma \ref{lemma: valuations under pi}, for any $0<r<1$, we have
\[
\pi_{1,\gerp}^{-1}(\tYrig[0,r]) \subset \pi_{2,\gerp}^{-1}(\tYrig[0,r)).
\]
The $U_\gerp$ operators commute with each other and their composition is called $U_p$.
\end{dfn}

 We also define, for any $\calU \subset \tYrig$, and any $\gerp|p$,
\[
w_\gerp:\omega^{\underline{k}}(\calU) \arr \omega^{\underline{k}}(w_\gerp(\calU)),
\]
by $w_\gerp(f)={\rm pr}^* w_\gerp^* (f)$. We can similarly define 
\[
w:\omega^{\underline{k}}(\calU) \arr \omega^{\underline{k}}(w(\calU)).
\]

For $Q=(\uA,H) \in \Yrig$, we can decompose $H=\oplus_{\gerp|p} H_\gerp$. For a fixed $\gerp_0|p$, we define
\[
\Sib_{\gerp_0}(Q):=\{(\uA,H^\prime)\in \Yrig: H^\prime_{\gerp}=H_\gerp \iff \gerp\neq \gerp_0\},
\]
\[
\Sib(Q):=\pi^{-1}(\pi(Q))-\{Q\}.
\]
Elements of $\Sib(Q)$ are points of $\Yrig$ corresponding to all $(\uA,H^\prime)$, in which $H^\prime$ is different from $H$. From definition, we have  $U_\gerp(Q)=(1/p^{f_\gerp}) \sum_{Q^\prime \in \Sib_\gerp(Q)} w_\gerp(Q^\prime)$ as a correspondence on $\tYrig$. 

We recall an analytic continuation result proved essentially in \cite{GorenKassaei}.

\begin{prop}\label{Proposition: canonical analytic continuation} Let $f \in \calM_{\underline{k}}^\dagger$ be a $U_p$-eigenform with eigenvalue $a_p\neq 0$. Then, $f$ can be extended to a section of $\omega^{\underline{k}}$ on $\calV:=\{Q \in \tYrig: \nu_\beta(Q)+p\nu_{\sigma\circ\beta}(Q)<p,\  \forall \beta \in \BB\}$.
\end{prop}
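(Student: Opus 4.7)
The overarching strategy is the standard iterative analytic continuation, due in essence to Buzzard--Taylor, powered by the eigenvalue equation $f = a_p^{-1} U_p f$ refined prime-by-prime to $f = a_\gerp^{-1} U_\gerp f$. This refinement is legitimate since the $U_\gerp$ commute and each $a_\gerp \neq 0$ as $a_p = \prod_{\gerp \mid p} a_\gerp$. By definition of $\calM_{\underline{k}}^\dagger$, the form $f$ is already defined on $\tYrig[0,r]$ for some $r>0$; for $r$ small enough, $\nu_\beta(Q) \leq \nu(Q) \leq r$ yields $\nu_\beta(Q) + p\,\nu_{\sigma\circ\beta}(Q) \leq (p+1)r < p$, so $\tYrig[0,r] \subseteq \calV$, providing the base case of the induction.

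The core input is the dynamics of $U_\gerp$ on $\calV$, extracted from the Breuil--Kisin module analysis of Section \ref{Section: Kisin modules}. Fix $\gerp \mid p$, a point $Q = (\uA,C) \in \calV$, and $Q' = (\uA, D) \in \Sib_\gerp(Q)$; since siblings at $\gerp$ only alter the subgroup scheme at $\gerp$, the valuations $\nu_\beta$ outside $\BB_\gerp$ are unaffected. The identification preceding the proposition gives
\[
f(Q) \;=\; \frac{1}{a_\gerp\, p^{f_\gerp}} \sum_{Q' \in \Sib_\gerp(Q)} f\bigl(w_\gerp(Q')\bigr),
\]
while Proposition \ref{Proposition: valuations under w} gives $\nu_\beta(w_\gerp(Q')) = 1 - \nu_\beta(Q')$ for $\beta \in \BB_\gerp$. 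Fixing $\beta_0 \in \BB_\gerp$ and setting $d_\gerp(R) := \sum_{i=0}^{f_\gerp - 1} p^{\,f_\gerp - 1 - i}\, \nu_{\sigma^i\circ\beta_0}(R)$, an application of Corollary \ref{Corollary: inequalities} to $(\uA,C)$ and $(\uA,D)$ yields $d_\gerp(Q') \geq (p^{f_\gerp} - 1)/(p-1) - d_\gerp(Q)$, equivalently
\[
d_\gerp\bigl(w_\gerp(Q')\bigr) \;\leq\; d_\gerp(Q).
\]
The defining inequality of $\calV$ is then used to upgrade this to a \emph{strict} inequality for $Q$ off the ordinary locus, providing a genuine contraction on the depth $d_\gerp$.

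To conclude, one inductively enlarges the domain of definition: having $f$ analytically extended to an admissible open $\calU \subseteq \calV$ containing $\tYrig[0,r]$, for any $Q \in \calV$ just outside $\calU$ the points $w_\gerp(Q')$ lie at strictly smaller $d_\gerp$-level, hence in $\calU$, so the displayed equation \emph{defines} $f(Q)$; cycling through the primes $\gerp \mid p$ and passing to the limit exhausts $\calV$. The main obstacle is not the pointwise inequality but its \emph{geometric} upgrade: one must verify that the successively enlarged domains are admissible open in the sense of Raynaud, that the pushforward along $\pi_{1,\gerp}$ remains meaningful on them (so the sibling fibres summed over behave well, including over the cuspidal boundary coming from toroidal compactification), and that the successive definitions agree as analytic sections of $\omega^{\underline{k}}$ rather than as bare functions on points. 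This is precisely where the formal-model geometry and stratification results of \cite{GorenKassaei} are used, and is why the proposition is attributed there.
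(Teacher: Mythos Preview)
Your proposal has two genuine gaps.

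First, the refinement $f = a_\gerp^{-1} U_\gerp f$ is unjustified: the hypothesis is only that $f$ is a $U_p$-eigenform, and an eigenvector of a product of commuting operators need not be an eigenvector of each factor (take $A=\diag(1,2)$, $B=\diag(2,1)$: every vector is an $AB$-eigenvector). The paper works with $U_p$ throughout and never needs individual $U_\gerp$-eigenvalues here.

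Second, and more seriously, the dynamical input you invoke is the wrong one for this proposition. Corollary~\ref{Corollary: inequalities} gives only the non-strict bound $d_\gerp(w_\gerp(Q')) \leq d_\gerp(Q)$ on the weighted sum $d_\gerp$. But for $f_\gerp \geq 3$ the quantity $d_\gerp$ is not the quantity $\nu_\beta + p\,\nu_{\sigma\circ\beta}$ defining $\calV$, so controlling $d_\gerp$ does not produce nested admissible opens exhausting $\calV$. Moreover, a non-strict (or even strict) inequality with no definite contraction factor cannot drive a finite induction: you need $w_\gerp(Q')$ to land in a \emph{fixed} previously-constructed open, not merely at smaller depth. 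Your ``upgrade to strict via the defining inequality of $\calV$'' is asserted but not argued, and would not suffice even if true.

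The paper's argument uses instead Lemma~\ref{lemma: valuations under pi}, which computes $\nu_\beta(\pi(Q))$ exactly on the regions $\calV_\gerp$ and $\calW_\gerp$; combined with Proposition~\ref{Proposition: valuations under w} this yields a clean factor-of-$p$ contraction on the quantity $\nu_\beta + p\,\nu_{\sigma\circ\beta}$ itself. One filters $\calV$ by the explicit admissible opens $\calV(s)=\{\nu_\beta + p\,\nu_{\sigma\circ\beta} \leq s\ \forall\beta\}$, verifies $\pi_1^{-1}(\calV(s)) \subset \pi_2^{-1}(\calV(s/p))$, and concludes in finitely many steps via Proposition~3.1 of \cite{Kassaeiclassicality}. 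The Breuil--Kisin inequality you cite is reserved for the deeper continuation into $\calR$ (Proposition~\ref{Proposition: Analytic continuation}), where the canonical-subgroup dichotomy underlying Lemma~\ref{lemma: valuations under pi} no longer applies.
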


\begin{proof} Assume $f\in H^0(\tYrig[0,r],\omega^{\underline{k}})$. Let $\calV(s)=\{Q \in \tYrig: \nu_\beta(Q)+p\nu_{\sigma\circ\beta}(Q)\leq s,\  \forall \beta \in \BB\}$. It is enough to show that for any $s < p$, we can extend $f$ to $\calV(s)$. Let $M\geq 0$ be an integer  such that $\calV(p^{-M}s) \subset \tYrig[0,r]$. We consider $f$ as a section of $\omega^{\underline{k}}$ over $\calV(p^{-M}s)$ by restriction. Consider the  sequence of admissible opens in $\tYrig$
\[
\calV(p^{-M}s) \subset \calV(p^{-M+1}s)  \subset \cdots \subset \calV(s).
\]
By Proposition 3.1 of \cite{Kassaeiclassicality}, to extend $f$ from $\calV(p^{-M}s)$ to $\calV(s)$, it is enough to show that for all $1\leq i\leq M$, we have $\pi_1^{-1} (\calV(p^{-M+i}s)) \subset \pi_2^{-1} (\calV(p^{-M+i-1}s))$. This follows immediately from Lemma \ref{lemma: valuations under pi} and Proposition \ref{Proposition: valuations under w}.
\end{proof}

\subsection{The locus $|\tau|\leq 1$.} \label{Section: tau}By properties of the stratification $\{W_\tau\}$ of $\Xbar$ recalled in \S \ref{section: stratifications}, the union 
\[
W_1:=\bigcup_{|\tau \cap \BB_\gerp|> 1, \exists\gerp}W_\tau
\]
 is a closed subscheme of $\Xbar$, and, in fact, a closed subscheme of $\tilde{\Xbar}$. We define
\[
\tilde{\Xbar}^{|\tau| \leq 1}:=\spe^{-1}(\tilde{\Xbar}-W_1),
\]
\[
\tilde{\Ybar}^{|\tau| \leq 1}:=\pi^{-1}(\tilde{\Xbar}^{|\tau| \leq 1}).
\]
We also define $\tXrigtau:=\spe^{-1}(\tilde{\Xbar}^{|\tau| \leq 1})$ and $\tYrigtau:=\spe^{-1}(\tilde{\Ybar}^{|\tau| \leq 1})=\pi^{-1}(\tXrigtau).$ These are admissible open subsets of $\tXrig$ and $\tYrig$, respectively.   We will study the dynamics of the Hecke correspondences $U_\gerp$ on $\tYrigtau$.

\subsection{Analytic Continuation} \label{inert assumptiom} From this point on, to keep the notation less cluttered, and the presentation clearer, we will assume that $p$ is inert in $\ol$. In \S \ref{Section: Appendix}, we will indicate how to extend these arguments to the general case where $p$ is unramified in $\ol$. The passage from the inert case to the general case is mostly of combinatorial nature.
If $p$ is inert in $\ol$, then the notation simplifies as follows: 
\[
\Sib(Q)=\Sib_{(p)}(Q),\ 
U_p=U_{(p)},\ 
W_1=\bigcup_{|\tau|>1} W_\tau,\ \pi_1:=\pi_{1,(p)}, \pi_2:=\pi_{2,(p)}, w=w_{(p)}.
\]

First, we prove a lemma regarding valuations of points in $\tYrigtau$.

\begin{lem}\label{Lemma: type one valuations}
Let $Q \in \tYrigtau$ be of non-ordinary reduction. Let $\tau(\pi(Q))=\{\beta_0\}$. There exists a unique $0\leq m \leq g-1$, such that
\[
\nu_\beta(Q)=\begin{cases}
1 & \beta=\sigma^j\circ\beta_0\ \ \ 0\leq j < m\\
\in [0,1) &\beta=\sigma^{m}\circ\beta_0\\
0&\beta=\sigma^j\circ\beta_0\ \ \ m < j \leq g-1.
\end{cases}
\]
In particular, for such points, $\nu(Q)$ determines $\nu_\beta(Q)$ for all $\beta\in\BB$, by simply choosing $m$ to be the integral part of $\nu(Q)$. Furthermore, we have $I(\Qbar)=\{\sigma^m \circ \beta_0\}$ if $\nu(Q)\not\in\ZZ$, and $I(\Qbar)=\emptyset$ otherwise.
\end{lem}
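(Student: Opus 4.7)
The plan is to deduce the shape of the valuation vector at $Q$ from the combinatorial constraints that $\tau(\Pbar)=\{\beta_0\}$ places on $(\varphi(\Qbar),\eta(\Qbar))$, combined with the admissibility relation $\ell(\varphi^c)\subseteq\eta$ and the local description of $\widehat{\calO}_{\Ybar,\Qbar}$ recalled in (\ref{equation:local deformation ring at Q bar -- arithmetic version}). Throughout, I abbreviate $\beta_j=\sigma^j\circ\beta_0$ for $0\leq j\leq g-1$, and write $(\varphi,\eta,I):=(\varphi(\Qbar),\eta(\Qbar),I(\Qbar))$.

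The first step is to apply Lemma \ref{lemma: type and phi eta} to $\tau(\Pbar)=\{\beta_0\}$, yielding $\varphi\cap\eta\subseteq\{\beta_0\}$ together with $\beta_0\in(\varphi\cap\eta)\cup(\varphi^c\cap\eta^c)$. A cyclic admissibility argument rules out the second alternative: if $\beta_0\notin\eta$ and $\varphi\cap\eta=\emptyset$, then admissibility gives $\beta_1\in\varphi$, and disjointness forces $\beta_1\notin\eta$; iterating around the cycle produces $\beta_j\in\varphi$, $\beta_j\notin\eta$ for every $1\leq j\leq g-1$ and eventually $\beta_0\in\varphi$, contradicting $\beta_0\in\varphi^c$. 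Hence $\varphi\cap\eta=\{\beta_0\}$.

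The second step is to identify $(\varphi,\eta)$ explicitly. Let $m\in\{0,\dots,g-1\}$ be the unique index with $\{\beta_0,\dots,\beta_m\}\subseteq\eta$ and either $m=g-1$ or $\beta_{m+1}\notin\eta$. An analogous cyclic admissibility induction, combined with $\varphi\cap\eta=\{\beta_0\}$, forces $\beta_{m+1},\dots,\beta_{g-1}\notin\eta$, $\beta_{m+2},\dots,\beta_{g-1}\in\varphi$, and $\beta_1,\dots,\beta_m\notin\varphi$. The only remaining freedom is whether $\beta_{m+1}\in\varphi$, yielding exactly two admissible configurations: one with $\beta_{m+1}\in\varphi$ (in which case $\ell(\varphi)=\{\beta_m,\dots,\beta_{g-1}\}$ and $I=\{\beta_m\}$), and one with $\beta_{m+1}\notin\varphi$ (in which case $I=\emptyset$, necessarily requiring $m\leq g-2$).

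The third step is to read off the valuations directly from the definition of $\nu_\beta(Q)$. In the $I=\{\beta_m\}$ configuration, $\nu_{\beta_j}(Q)=1$ for $j<m$ (as $\beta_j\in\eta-I$), $\nu_{\beta_j}(Q)=0$ for $j>m$ (as $\beta_j\notin\eta$), and $\nu_{\beta_m}(Q)=\nu(x_{\beta_m}(Q))$ lies strictly in $(0,1)$ because both $x_{\beta_m}$ and $y_{\beta_m}=p/x_{\beta_m}$ lie in the maximal ideal of $\widehat{\calO}_{\Ybar,\Qbar}$; hence $\nu(Q)\notin\ZZ$ and $I(\Qbar)=\{\sigma^m\circ\beta_0\}$. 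In the $I=\emptyset$ configuration, $\nu_{\beta_j}(Q)=1$ for $j\leq m$ and $\nu_{\beta_j}(Q)=0$ otherwise, giving $\nu(Q)=m+1\in\ZZ$; here the lemma's index should be read as $m+1$, placing the ``$[0,1)$'' slot at $\beta_{m+1}$ with fractional value $0$, and $I(\Qbar)=\emptyset$. In both scenarios $m=\lfloor\nu(Q)\rfloor$, and the non-ordinary hypothesis rules out the degenerate cases $\nu(Q)\in\{0,g\}$, since $\eta=\emptyset$ or $\eta=\BB$ would each force $\tau=\emptyset$ via Lemma \ref{lemma: type and phi eta}. The main delicate point is running the cyclic admissibility induction cleanly so as to isolate exactly the two configurations above; after that, uniqueness of $m$ and the ``Furthermore'' clause follow by matching the integer versus non-integer values of $\nu(Q)$ with the two cases.
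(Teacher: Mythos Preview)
Your argument is correct, but it takes a genuinely different route from the paper's. The paper works directly with the valuations: starting from any index $r$ with $\nu_{\sigma^r\circ\beta_0}(Q)\neq 1$, it invokes Theorem~\ref{theroem: theorem of cube}(1) to deduce $\sigma^{r+1}\circ\beta_0\in\varphi(\Qbar)$, then uses $\varphi\cap\eta\subseteq\tau=\{\beta_0\}$ from Lemma~\ref{lemma: type and phi eta} to conclude $\sigma^{r+1}\circ\beta_0\notin\eta(\Qbar)$, hence $\nu_{\sigma^{r+1}\circ\beta_0}(Q)=0$, and iterates. You instead classify the admissible pairs $(\varphi,\eta)$ compatible with $\tau=\{\beta_0\}$ completely, isolating the two explicit configurations ($I=\{\beta_m\}$ versus $I=\emptyset$), and only then read off the valuation vector from the definition. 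Your approach makes the ``Furthermore'' clause about $I(\Qbar)$ entirely transparent---the two configurations correspond precisely to the non-integer and integer cases of $\nu(Q)$---whereas the paper dispatches it in one sentence; the paper's approach is in turn shorter and avoids the case split.

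One small inaccuracy in your closing remark: $\eta=\BB$ does \emph{not} force $\tau=\emptyset$ via Lemma~\ref{lemma: type and phi eta} (take $\varphi=\{\beta_0\}$, $\eta=\BB$, which gives $\tau=\{\beta_0\}$). This is harmless, however, since your Step~1 already establishes $\varphi\cap\eta=\{\beta_0\}$, and $\nu(Q)=g$ would require $\varphi=\emptyset$, which that equality rules out directly.
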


\begin{proof}  

Assume that for some $0 \leq r \leq g-1$, we have  $\nu_{\sigma^r\circ\beta_0}(Q)\neq 1$. We show that $\nu_{\sigma^s\circ\beta_0}(Q)=0$ for all $r<s<g$. If $r=g-1$, then there is nothing to prove. Otherwise, using part (1) of Theorem \ref{theroem: theorem of cube}, it follows that $\sigma^{r+1}\circ\beta_0 \in \varphi(\Qbar)$. Since $\sigma^{r+1}\circ\beta_0 \not\in \tau(\pi(\Qbar))$, Lemma
\ref{lemma: type and phi eta} implies that $\sigma^{r+1} \circ \beta_0 \not\in \eta(\Qbar)$. This, in turn, implies that $\nu_{\sigma^{r+1} \circ \beta_0}(Q)=0$. We can now replace $r$ with $r+1$, and repeat the same argument until we arrive at $r=g-1$ and the claim is proved. Since $Q$ has non-ordinary reduction, there is $0 \leq r \leq g-1$, such that $\nu_{\sigma^{r}\circ\beta}(Q) \neq 1$. Taking $m$ to be the smallest such $r$ proves the first part of the Lemma.

If we write $\nu(Q)=[\nu(Q)]+\{\nu(Q)\}$ as the sum of its integer and fractional parts,  the valuation vector of $Q$ can be determined as in the statement of the lemma with $m=[\nu(Q)]$, and with $\nu_{\sigma^m\circ\beta_0}(Q)=\{\nu(Q)\}$. The final statement follows immediately from the definition of valuations.

\end{proof}

\begin{dfn} \label{Definition: type one valuations} For future reference, we denote the valuation vector obtained in the Lemma $v(\beta_0,m,g,x)$ where $x=v_{\sigma^m\circ\beta_0}(Q)$.
\end{dfn}
We will use the following lemma  in what follows.

\begin{lem}\label{Lemma: automatic type one} Let $Q=(\uA,C) \in \tYrig$ have a valuation vector as in  Lemma \ref{Lemma: type one valuations}, and assume that $\nu(Q) \not\in\ZZ$. Then, $Q \in \tYrigtau$.
\end{lem}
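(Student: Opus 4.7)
My plan is to read off the stratum of $\Qbar$ directly from the hypercube combinatorics of Theorem \ref{theroem: theorem of cube}, compute the intersections $\varphi \cap \eta$ and $\varphi^c \cap \eta^c$, and then use Lemma \ref{lemma: type and phi eta} to squeeze $\tau(\pi(\Qbar))$ into a set of size at most one.

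Since the valuation vector of $Q$ is $v(\beta_0, m, g, x)$ and $\nu(Q) = m + x \notin \ZZ$, the entry $x$ must lie strictly in $(0, 1)$. Consequently $\nu_\gerY(Q)$ lies in the open face $\calF_{\bf a}$ whose coordinates are $a_{\sigma^j \circ \beta_0} = 1$ for $0 \leq j < m$, $a_{\sigma^m \circ \beta_0} = *$, and $a_{\sigma^j \circ \beta_0} = 0$ for $m < j \leq g-1$. By part (1) of Theorem \ref{theroem: theorem of cube}, $\Qbar$ lies in $W_{\varphi, \eta}$ with $\eta = \{\sigma^j \circ \beta_0 : 0 \leq j \leq m\}$, while the formula $\varphi({\bf a}) = \{\beta : a_{\sigma^{-1} \circ \beta} \neq 1\}$, combined with the cyclic identity $\sigma^{-1} \circ \beta_0 = \sigma^{g-1} \circ \beta_0$, yields
\[
\varphi = \{\beta_0\} \cup \{\sigma^j \circ \beta_0 : m+1 \leq j \leq g-1\}.
\]

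A direct inspection then gives
\[
\varphi \cap \eta = \{\beta_0\}, \qquad \varphi^c \cap \eta^c = \emptyset,
\]
the second equality being precisely where the hypothesis $x \neq 0$ enters: if $x$ were $0$, the index $\sigma^m \circ \beta_0$ would lie in both $\varphi^c$ and $\eta^c$. Invoking Lemma \ref{lemma: type and phi eta} with these identifications forces $\tau(\pi(\Qbar)) \subseteq \{\beta_0\}$, so $|\tau(\pi(\Qbar))| \leq 1$, i.e., $\pi(\Qbar) \notin W_1$, and therefore $Q \in \tYrigtau$, as required.

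I do not expect any real obstacle here: the argument is a direct combinatorial unwinding of the definitions. The only mild care needed is in the cyclic wraparound on $\BB_\gerp$ (which is precisely what pushes $\beta_0$ into $\varphi$) and in the boundary cases $m = 0$ and $m = g-1$, each of which collapses to the same two equalities above.
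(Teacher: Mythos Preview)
Your proof is correct and follows essentially the same approach as the paper's: both read off $(\varphi(\Qbar),\eta(\Qbar))$ from the valuation vector via Theorem \ref{theroem: theorem of cube}, compute $\varphi\cap\eta=\{\beta_0\}$ and $\varphi^c\cap\eta^c=\emptyset$, and then apply Lemma \ref{lemma: type and phi eta}. The paper in fact concludes the slightly sharper statement $\tau(\pi(\Qbar))=\{\beta_0\}$ (using both inclusions of Lemma \ref{lemma: type and phi eta}), but your inclusion $\tau(\pi(\Qbar))\subseteq\{\beta_0\}$ is all that is needed for the lemma.
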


\begin{proof} Part (1) of Theorem \ref{theroem: theorem of cube} allows us to calculate the $(\varphi,\eta)$-invariants of $\Qbar$ using the valuation vector of $Q$, and observe that $\varphi(\Qbar)\cap\eta(\Qbar)=\{\beta_0\}$, and $\varphi(\Qbar)^c \cap \eta(\Qbar)^c=\emptyset$. Lemma \ref{lemma: type and phi eta} implies that $\tau(\pi(\Qbar))=\{\beta_0\}$ and the lemma follows.
\end{proof}

\begin{dfn}
Let $\calR$ be the admissible open subset of $\tYrigtau$ defined as
\[
\calR=\tYrigtau[0,g-\sum_{i=1}^{g-1} 1/p^i).
\]
\end{dfn}

\begin{prop}\label{Proposition: Analytic continuation}
Let $f$ be an overconvergent Hilbert modular form of weight $\underline{k}$, which is a $U_p$ eigenform with a nonzero eigenvalue $a_p$. Then $f$ extends analytically from $\tYrig[0,0]$ to $\calR$.
\end{prop}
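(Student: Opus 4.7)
The plan is the standard Buzzard--Kassaei iterative extension. Since $U_p f = a_p f$ with $a_p \ne 0$, the identity $f = a_p^{-1} U_p f$ combined with the correspondence description
\[
U_p(Q) = \frac{1}{p^g} \sum_{Q' \in \Sib(Q)} w(Q')
\]
from \S\ref{Section: U_p} gives
\[
f(Q) = \frac{1}{a_p p^g} \sum_{Q' \in \Sib(Q)} f(w(Q')),
\]
extending $f$ from any admissible open $\calU \subset \tYrig$ to the larger open $T(\calU) := \{Q : w(Q') \in \calU \text{ for all } Q' \in \Sib(Q)\}$, with the verification of admissibility reducing (as in the proof of Proposition~\ref{Proposition: canonical analytic continuation}) to the inclusion $\pi_1^{-1}(T(\calU)) \subseteq \pi_2^{-1}(\calU)$ of admissible opens in $\tilde{\gerY}_{\rm rig}^{(p)}$. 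Starting with $\calU_0 = \calV \cap \tYrigtau$ (where $f$ is defined by Proposition~\ref{Proposition: canonical analytic continuation}; this contains $\tYrigtau[0,1)$, i.e.\ all $Q$ with valuation vector $v(\beta_0, 0, g, x)$ in the notation of Lemma~\ref{Lemma: type one valuations}), I apply $T$ iteratively to produce a chain $\calU_0 \subset \calU_1 \subset \cdots$ and aim to prove $\calR \subset \calU_N$ for some finite $N = N(g,p)$.

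The core valuation estimate goes as follows. For $Q \in \tYrigtau$ non-ordinary with valuation vector $v(\beta_0, m, g, x)$, and any sibling $Q' \in \Sib(Q)$ (automatically in $\tYrigtau$ since $\pi(Q') = \pi(Q)$), Lemma~\ref{Lemma: type one valuations} gives $Q'$ a valuation vector $v(\beta_0, m', g, x')$. The Kisin-module bijection of Proposition~\ref{Proposition: Kisin Modules} parametrizes $Q'$ by a tuple $(\mu_\beta)_{\beta\in\BB}$ with $\mu_\beta \in \gerS_1 \setminus \{0\}$, and Lemma~\ref{Lemma: valuation calculation} together with the non-negativity of the $u$-adic valuations of the $\mu_\beta$ yields (Corollary~\ref{Corollary: inequalities}) the family of inequalities
\[
T_\beta(Q) + T_\beta(Q') \geq \frac{p^g - 1}{p - 1}, \qquad T_\beta(R) := \sum_{i=0}^{g-1} p^{g-1-i} \nu_{\sigma^i\beta}(R),
\]
for every $\beta \in \BB$. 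Choosing $\beta = \sigma^m \beta_0$ (the ``step'' position of $Q$), a direct computation gives $T_\beta(Q) = p^{g-1} x + (p^m - 1)/(p-1)$, whence $T_\beta(Q') \geq (p^g - p^m)/(p-1) - p^{g-1} x$. Combined with the discrete structure of $v(\beta_0, m', g, x')$ this rules out certain shapes -- in particular, it forces $m' \geq m$ once $x$ is small enough -- and produces a sharp upper bound on $\nu(w(Q')) = g - \nu(Q')$; in the critical case $m = m' = g-1$ it takes the clean form
\[
\nu(w(Q')) \leq x + \sum_{i=1}^{g-1} \frac{1}{p^i},
\]
which matches precisely the bound defining $\calR$ and explains why a single iteration of $T$ already takes the top stratum of $\calR$ into $\calV$.

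Feeding the estimate back into the iteration, an induction on $m$ from $m=0$ up to $m=g-1$, applying $T$ repeatedly within each level to enlarge the allowed range of $x$ and then jumping to the next level once the range is fully absorbed, shows $\calR \subset \calU_N$ for $N$ depending only on $g$. The $\pi_1/\pi_2$ admissibility at each stage reduces to the same valuation estimate via Proposition~\ref{Proposition: valuations under w} ($\nu_\beta(Q) + \nu_\beta(w(Q')) = 1$) and Lemma~\ref{lemma: valuations under pi}. The main obstacle is the combinatorial bookkeeping in the core estimate: the set $\Sib(Q)$ contains $p^g$ siblings, each with potentially different $(m', x')$, and one must classify which shapes are actually \emph{forced} by the Kisin-module inequalities rather than merely permitted, handling in particular the ``anti-canonical'' siblings (with $m'$ far from $m$) that can otherwise produce $w(Q')$ with $\nu$ close to $g$ -- the precise bound $g - \sum_{i=1}^{g-1} 1/p^i$ is exactly the threshold beyond which such siblings would derail the iteration by escaping every previously extended region.
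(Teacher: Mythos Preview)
Your strategy is sound and the core Kisin-module estimate is correct, but the argument is considerably more elaborate than the paper's. The paper's proof is a \emph{single} application of the $U_p$-correspondence: after Proposition~\ref{Proposition: canonical analytic continuation} extends $f$ to $\tYrigtau[0,1)$, one shows directly that $\pi_1^{-1}(\calR) \subset \pi_2^{-1}(\tYrigtau[0,1))$, i.e., for every $Q \in \calR$ (regardless of its level $m$) and every $Q' \in \Sib(Q)$, one has $w(Q') \in \tYrigtau[0,1)$. There is no iteration and no induction on $m$; your scheme would in fact terminate after one step.

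The difference lies in the choice of $\beta$ in Corollary~\ref{Corollary: inequalities}. You take the ``step position'' $\beta = \sigma^m\circ\beta_0$ of $Q$, which yields a level-dependent bound and leads you to climb through the $m$-levels. The paper instead always takes $\beta = \sigma^{g-1}\circ\beta_0$. Assuming for contradiction that $\nu_{\sigma^{g-1}\circ\beta_0}(Q') = 0$, and bounding all $\nu_\beta(Q) \leq 1$ crudely, the inequality reduces to
\[
p^{g-1}\nu_{\sigma^{g-1}\circ\beta_0}(Q) + \sum_{i=1}^{g-1} p^{g-1-i} \geq p^{g-1},
\]
whence $\nu_{\sigma^{g-1}\circ\beta_0}(Q) \geq 1 - \sum_{i=1}^{g-1} 1/p^i$; by Lemma~\ref{Lemma: type one valuations} this forces $\nu(Q) \geq g - \sum_{i=1}^{g-1} 1/p^i$, contradicting $Q \in \calR$. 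Thus every sibling satisfies $\nu_{\sigma^{g-1}\circ\beta_0}(Q') \neq 0$, i.e., $\nu(Q') > g-1$, uniformly in the level of $Q$. Your approach would eventually reach the same conclusion, but the paper's choice of $\beta$ buys you the whole of $\calR$ at once rather than stratum by stratum.
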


\begin{proof}
By Proposition \ref{Proposition: canonical analytic continuation}, $f$ extends to $\calV \cap \tYrigtau=\tYrigtau[0,1)$.   By Proposition 3.1 of \cite{Kassaeiclassicality}, to extend $f$ further to $\calR$, it is enough to show that
\[
\pi_1^{-1}(\calR) \subset \pi_2^{-1}(\tYrigtau[0,1))
\]
Let $R=(\uA,C,D)$ be such that $Q=(\uA,C) \in \calR$. We can assume that $Q$ has non-ordinary reduction, and, hence, $\tau(\uA)=\{\beta_0\}$, for some $\beta_0\in \BB$. We want to show that $\pi_2(R)\in \tYrigtau[0,1)$. This is equivalent to showing that for every $Q^\prime \in \Sib(Q)$, we have $w(Q^\prime) \in \tYrigtau[0,1)$. Equivalently, we must show that $|\tau(w(Q^\prime))|\leq 1$ and $Q^\prime\in \tYrig(g-1,g]$. We argue that the the first assertion follows from the second one. Since $Q^\prime\in \Yrigtau$, the valuation vector of $Q^\prime$ is as in Lemma \ref{Lemma: type one valuations} for some $m$. If $Q^\prime\in \tYrig(g-1,g]$, then $m=g-1$, which implies that the valuation vector of $w(Q^\prime)$ must also be as in Lemma \ref{Lemma: type one valuations} (for $m=0$). Lemma \ref{Lemma: automatic type one}, then, implies that $|\tau(w(Q^\prime))|\leq 1$.  The second assertion, i.e.,  $Q^\prime\in \tYrig(g-1,g]$, is equivalent to   $\nu_{\sigma^{g-1}\circ\beta_0}(Q^\prime)\neq 0$ by Lemma \ref{Lemma: type one valuations}. Assume it doesn't hold. Then, writing the inequality from Corollary \ref{Corollary: inequalities} for  $\beta=\sigma^{g-1}\circ\beta_0$ and noting that $\nu_\beta(Q)\leq 1$ for all $\beta\in\BB$, we deduce
\[
p^{g-1}\nu_{\sigma^{g-1}\circ\beta_0}(Q)+\sum_{i=1}^{g-1} p^{g-1-i} \geq p^{g-1},
\]
whence, $\nu_{\sigma^{g-1}\circ\beta_0}(Q) \geq 1-\sum_{i=1}^{g-1} 1/p^i$. Lemma \ref{Lemma: type one valuations} now implies that $\nu(Q) \geq g-\sum_{i=1}^{g-1} 1/p^i$, which contradicts our choice of $Q$. This proves the claim.
\end{proof}

\begin{rmk}\label{Remark: correspondence on R} It follows immediately from the proof of Proposition \ref{Proposition: Analytic continuation} that $\pi_1^{-1}(\calR)\subset\pi_2^{-1}(\calR)$.
\end{rmk}

The following is a key result. It is more refined than necessary for the purpose of the analytic continuation in Proposition \ref{Proposition: Analytic continuation}, but it is important for the gluing process and the proof of classicality that follow.

\begin{lem}\label{Lemma: saturated} Let $Q \in \tYrigtau(g-1,g-\sum_{i=1}^{g-1} 1/p^i)$. Then
\[
\Sib(Q)\subset \tYrigtau(g-1,g-\sum_{i=1}^{g-1} 1/p^i).
\]
In other words, $\tYrigtau(g-1,g-\sum_{i=1}^{g-1} 1/p^i)$ is saturated with respect to $\pi:\tYrig \arr \tXrig$.

\end{lem}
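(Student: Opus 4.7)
The plan is to parametrise the fibre $\pi^{-1}(\pi(Q))$ by Breuil-Kisin modules (Proposition \ref{Proposition: Kisin Modules}) and combine the resulting valuation equalities with the normal form for type-$\leq 1$ points (Lemma \ref{Lemma: type one valuations}). Since $\nu(Q) > g-1 > 0$, the point $Q$ has non-ordinary reduction, and $Q \in \tYrigtau$ forces $\tau(\pi(Q)) = \{\beta_0\}$ for a unique $\beta_0 \in \BB$. Lemma \ref{Lemma: type one valuations} then pins the valuation vector of $Q$ down to $v(\beta_0, g-1, g, x)$, with $x := \nu(Q) - (g-1) \in (0, 1-S)$ and $S := \sum_{k=1}^{g-1} p^{-k}$. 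Any sibling $Q' = (\uA, D) \in \Sib(Q)$ shares $\tau(\pi(Q')) = \{\beta_0\}$, hence automatically lies in $\tYrigtau$ with vector $v(\beta_0, m'', g, x'')$ for some $m'' \in \{0, \ldots, g-1\}$ and $x'' \in [0, 1)$. The task reduces to showing $m'' = g-1$ and $x'' \in (0, 1-S)$.

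I would fix bases $\{{\bf e}_\beta, {\bf w}_\beta\}$ adapted to $C$ as in Proposition \ref{Proposition: Kisin Modules}. The hypothesis $D \neq C$ forces every $\mu_\beta$ to be nonzero, since otherwise Frobenius-stability of $\gerM(\uA[p]/D)$ would propagate $\mu_\beta = 0$ around the whole $\sigma$-cycle, giving $D = C$. Writing $\lambda_j := m_{\sigma^j \circ \beta_0}/e$ and $y_j := \nu_{\sigma^j \circ \beta_0}(Q')$, the second relation of Proposition \ref{Proposition: Kisin Modules} yields the cyclic recurrence $\lambda_{j+1} = (1 - \nu_{\sigma^j \circ \beta_0}(Q)) + p\lambda_j - y_j$. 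Specialising to the vector of $Q$ and closing the cycle gives $\lambda_0 = \bigl(\sum_{i=0}^{g-1} p^{g-1-i} y_i - (1-x)\bigr)/(p^g-1)$, and the binding non-negativity $\lambda_{g-1} \geq 0$ is equivalent to
\[
y_{g-1} + \sum_{i=0}^{g-2} y_i/p^{i+1} \geq 1 - x. \qquad (\dagger)
\]

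The lower bound $\nu(Q') > g-1$ then drops out: if $m'' < g-1$ then $y_{g-1} = 0$ and the left-hand side of $(\dagger)$ is strictly less than $\sum_{k=1}^{m''+1} p^{-k} \leq S$ (using $x'' < 1$ and $m''+1 \leq g-1$), contradicting $1 - x > S$. Hence $m'' = g-1$, and in this case $(\dagger)$ becomes $x'' \geq 1 - x - S > 0$.

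The main obstacle is the matching upper bound $x'' < 1-S$, since the second relation combined with $(\dagger)$ alone permits $x''$ to reach or exceed $1-S$. To close the gap I would invoke the first relation of Proposition \ref{Proposition: Kisin Modules} at $\beta = \sigma^{g-1}\circ\beta_0$: since $x \in (0,1)$, Lemma \ref{Lemma: Diedonne module via Kisin module} ensures $h_\beta \in \gerS_1^\times$, so $a_\beta u^{ex} + \mu_{g-1}^p h_\beta = c_\beta u^{ex''}$ becomes a sharp three-term $u$-adic valuation comparison. In the case $m'' = g-1$ the closed form $\lambda_{g-1} = p^{g-1}(\lambda_0 - S)$ holds; setting $T := \lambda_0 - S \geq 0$ and $q := p^g$, the cyclic identity rewrites as $p\lambda_{g-1} = qT$ and $x'' = (1-S) - x + (q-1)T$. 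A trichotomy on the sign of $x - qT$ then yields: if $x < qT$, the first relation forces $x'' = x < 1-S$; if $x > qT$, it forces $x'' = qT$, and the case condition itself implies $x > q(1-S)/(q+1) > (q-1)(1-S)/q$, whence $x'' = q(1-S-x) < 1-S$; if $x = qT$, then $T = x/q$ and $x'' = (1-S) - x/q < 1-S$ directly. This exhausts the cases and completes the proof.
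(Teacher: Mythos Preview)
Your proof is correct and takes essentially the same approach as the paper: both establish the lower bound $m'' = g-1$ from non-negativity of $m_{\sigma^{g-1}\circ\beta_0}$ (your $(\dagger)$ is exactly the inequality invoked in the proof of Proposition \ref{Proposition: Analytic continuation}, which the paper cites), and then extract the upper bound on $x''$ from the first relation of Proposition \ref{Proposition: Kisin Modules} at $\beta = \sigma^{g-1}\circ\beta_0$ together with Lemma \ref{Lemma: Diedonne module via Kisin module} and the cyclic valuation identity of Lemma \ref{Lemma: valuation calculation}. The paper's case split $s \leq r$ versus $s > r$ is slightly more economical than your trichotomy on $x$ versus $qT$ --- in your Case 2 you could simply observe $x'' = qT < x < 1-S$ and skip the detour through $x'' = q(1-S-x)$, and the paper's $s > r$ case is precisely your Case 3 with strict inequality --- but the substance is identical.
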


\begin{proof} Assume $\tau(\Qbar)=\{\beta_0\}$, and let $Q^\prime \in \Sib(Q)$. We  freely use the notation from \S \ref{Section: Kisin modules} by setting $Q=(\uA,C)$ and $Q^\prime=(\uA,D)$. By Proposition \ref{Proposition: valuations via Kisin modules} and Lemma \ref{Lemma: automatic type one}, we have $r_{\sigma^i\circ\beta_0}=e$ for all $0 \leq i \leq g-2$ and $r:=r_{\sigma^{g-1}\circ\beta_0} \in (0, e(1-\sum_{i=1}^{g-1} 1/p^i))$. From the proof of Proposition \ref{Proposition: Analytic continuation}, we have $s_{\sigma^i\circ\beta_0}=e$ for all $0 \leq i \leq g-2$ and $s:=s_{\sigma^{g-1}\circ\beta_0} \in (0,e)$. If $s\leq r$, we are done. Assume $s>r$.

Proposition \ref{Proposition: Kisin Modules} gives us  $a_{\sigma^{g-1}\circ\beta_0} u^r+\mu_{\sigma^{g-1}\circ\beta_0}^p h_{\sigma^{g-1}\circ\beta_0}=c_{\sigma^{g-1}\circ\beta_0} u^{s}$. By Lemma  \ref {Lemma: Diedonne module via Kisin module}, we have $h_{\sigma^{g-1}\circ\beta_0}\in \gerS_1^\times$.  Therefore, comparing $u$-adic valuations, we deduce that the valuation of $\mu_{\sigma^{g-1}\circ\beta_0}$ equals $r/p$. This valuation is calculated in Lemma \ref{Lemma: valuation calculation}. It follows that
\[
s=e-(r/ep^g)-e\sum_{i=1}^{g-1} 1/p^i < e(1-\sum_{i=1}^{g-1} 1/p^i).
\]
 Therefore, $\nu(Q^\prime) \in (g-1,g-\sum_{i=1}^{g-1} 1/p^i)$, as desired.


\end{proof}

\section{The Main Result}

\subsection{The statement} Let $p>2$ be a prime number and $L$ a totally real
field in which $p$ is unramified.  Let $\SS=\{\gerp|p\}$ be the set of all prime ideals of $\ol$ dividing $p$. For every $\gerp\in\SS$, let $D_\gerp\subset Gal(\overline{\QQ}/L)$ denote a decomposition group at $\gerp$.

\begin{thm}\label{Theorem: modularity}   Let $\rho: Gal(\overline{\QQ}/L) \arr \GL_2(\calO)$ be a continuous representation, where $\calO$ is the ring of integers in a finite extension of $\QQ _p$, and $\germ$ its maximal ideal. Assume

\begin{itemize}

\item $\rho$ is unramified outside a finite set of primes,

\item For every prime $\gerp | p$, we have $\rho_{|_{D_\gerp}} \cong \alpha_\gerp \oplus \beta_\gerp$, where $\alpha_\gerp,\beta_\gerp: D_\gerp \arr \calO^\times$ are unramified characters distinct modulo $\germ$,

\item $\overline{\rho}:=(\rho\ {\rm mod}\ \germ)$ is ordinarily modular, i.e., there exists a classical Hilbert modular form $h$  of parallel weight $2$, such that ${\rho}\equiv \rho_h\ (\mathrm{mod}\  \germ)$, and $\rho_{h}$ is potentially ordinary and potentially Barsotti-Tate at every prime of $L$ dividing $p$ (see Remark \ref{Remark: ordinary lift}),

\item $\overline{\rho}:=(\rho\ {\rm mod}\ \germ)$ is absolutely irreducible when restricted to $Gal(\overline{\QQ}/L(\zeta_p))$.

\end{itemize}

Then, $\rho$ is isomorphic to $\rho_f$,  the Galois representation
associated to a Hilbert modular eigenform $f$ of weight
$(1,1,\cdots,1)$ and level $\Gamma_{00} (N)$ for some integer $N$
prime to $p$.

\end{thm}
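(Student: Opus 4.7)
The plan is to follow the Buzzard--Taylor strategy \cite{BuzzardTaylor}, using the analytic continuation and saturation results developed above in place of the elementary gluing available for modular curves. I will work under the inert hypothesis of \S\ref{inert assumptiom}; the general unramified case is obtained from the same argument via the combinatorial bookkeeping promised in \S\ref{Section: Appendix}. The first step is to produce two overconvergent Hilbert modular eigenforms $f_\alpha, f_\beta \in \calM^\dagger_{(1,\ldots,1)}$ of tame level $\Gamma_{00}(N)$, both of which give rise to $\rho$, with $U_p$-eigenvalues $a_\alpha := \alpha(\mathrm{Frob}_p)$ and $a_\beta := \beta(\mathrm{Frob}_p)$ respectively; both are $\calO^\times$-valued and distinct modulo $\germ$ by hypothesis. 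These arise from the weight-two ordinary lift $h$ of $\bar\rho$ via an $R=\mathbb{T}$ modularity lifting argument in weight two (applied after suitable unramified twisting by $\alpha$ or $\beta$): one obtains ordinary Hida families through the appropriate $p$-stabilizations of $h$ whose weight-$(1,\ldots,1)$ limits lie, \emph{a priori}, only in the finite-slope overconvergent space $\calM^\dagger_{(1,\ldots,1)}$ and realize $\rho$.

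Next I analytically continue and glue. By Proposition \ref{Proposition: canonical analytic continuation} each of $f_\alpha, f_\beta$ extends to $\calV$, and applying Proposition \ref{Proposition: Analytic continuation} to $f_\alpha$ extends it further to the admissible open $\calR = \tYrigtau[0, g - \sum_{i=1}^{g-1} p^{-i})$. The Atkin--Lehner involution $w$ turns $f_\beta$ into a section $w^* f_\beta$ of $\omega^{(1,\ldots,1)}$ on $w^{-1}(\calV)$; the companion-forms identity forced by the fact that $f_\alpha$ and $f_\beta$ share the same attached Galois representation $\rho$ will imply that $f_\alpha$ and a fixed scalar multiple of $w^* f_\beta$ agree on the overlap $\calV \cap w^{-1}(\calV)$. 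Consequently the pair glues to a single section $F$ of $\omega^{(1,\ldots,1)}$ on the admissible open $\calU := \calR \cup w^{-1}(\calV)$.

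The decisive step is to descend $F$ through $\pi\colon \tYrig \to \tXrig$ and invoke the Koecher principle. Lemma \ref{Lemma: saturated} shows that the strip $\tYrigtau(g-1, g - \sum_{i=1}^{g-1} p^{-i})$ is $\pi$-saturated, and $w^{-1}(\calV)$ is evidently $\pi$-saturated as well, so $\calU$ is saturated under $\pi$. The $U_p$-eigenform relation $U_p f_\alpha = a_\alpha f_\alpha$ then forces the restriction of $F$ to each fibre of $\pi$ to descend to a single value, yielding a section $F^\flat$ of $\omega^{(1,\ldots,1)}$ on $\pi(\calU) \subset \tXrig$. Using Theorem \ref{theorem: fund'l facts about the stratificaiton of Ybar}, Theorem \ref{Theorem: generic type}, and Lemma \ref{lemma: type and phi eta}, I will verify that the complement $\tXrig \setminus \pi(\calU)$ specializes into a closed subset of $\tilde{\overline{X}}$ supported on the locus $|\tau| \geq 2$, which has codimension two. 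The rigid analytic Koecher principle (Proposition \ref{Proposition: Koecher}) then extends $F^\flat$ to a classical Hilbert modular form $f$ of parallel weight one and level $\Gamma_{00}(N)$ on all of $\tXrig$. Since $f$ agrees with $f_\alpha$ on the ordinary locus it is an eigenform with the prescribed Hecke eigenvalues, and $\rho_f \cong \rho$ follows by Chebotarev.

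The main obstacle is the gluing and descent step: unlike for classical modular curves, $\calV \cup w^{-1}(\calV)$ is too small, and only the extended region $\calR$ brings $\calU$ large enough to cover all but a codimension-two locus downstairs in $\tilde{\overline{X}}$. This hinges on the precise value $g - \sum_{i=1}^{g-1} p^{-i}$ of the analytic continuation range, which is sharp enough to capture every fibre of $\pi$ containing a point barely outside $\calV$; the Breuil--Kisin bound of Corollary \ref{Corollary: inequalities}, fed into Lemma \ref{Lemma: saturated}, is what makes this work. Establishing the companion-forms identity so that the gluing is possible, and checking that the complement of $\pi(\calU)$ really lies in $\{|\tau| \geq 2\}$, will each require careful type- and $\pe$-invariant bookkeeping via Lemma \ref{lemma: type and phi eta} and the stratum combinatorics of Theorem \ref{Theorem: C cap YF cap YV}.
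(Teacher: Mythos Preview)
Your broad architecture is right, but the gluing and descent steps are misconceived in a way that would make the argument fail.

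You propose to glue $f_\alpha$ on $\calR$ with a scalar multiple of $w^*f_\beta$ on $w^{-1}(\calV)$, asserting they agree on the overlap. This is not the correct identity, and in fact no such identity holds. What the paper proves (Proposition \ref{Proposition: equality of sections} via a $q$-expansion calculation on $\pi_1^{-1}(\calR)$, together with the connectedness Lemma \ref{Lemma: connectedness}) is the relation
\[
\pi_1^*(a_\alpha f_\alpha - a_\beta f_\beta) = {\rm pr}^*\pi_2^*(f_\alpha - f_\beta),
\]
from which one deduces (Proposition \ref{Proposition: Intersection}) that the section $a_\alpha f_\alpha - a_\beta f_\beta$ on $\calR$ agrees with $w(f_\alpha - f_\beta)$ on $w^{-1}(\calR')$, where $\calR' = \tYrigtau[0,1)$. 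Both sides involve \emph{both} forms; it is this specific linear combination, not $f_\alpha$ alone, that extends to all of $\tYrigtau$. Your ``companion-forms identity'' is not available as stated.

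This error propagates into the descent. You assert that the $U_p$-eigen relation forces $F$ to be constant on fibres of $\pi$, and that $\calU$ is $\pi$-saturated. Neither is true: $w^{-1}(\calV)$ is certainly not $\pi$-saturated (over an ordinary point the fibre contains the canonical point with $\nu=0$), and an individual $U_p$-eigenform has no reason to be constant on $\pi$-fibres. The paper instead works with the glued combination $a_\alpha f_\alpha - a_\beta f_\beta$, which is defined on all of $\tYrigtau = \pi^{-1}(\tXrigtau)$, and descends it via the trace $\pi_*$ to $h$ on $\tXrigtau$; the saturation of $\calC$ (Lemma \ref{Lemma: saturated}) is used only to show, via Corollary \ref{Corollary: average}, that $\pi^*h = (p^g+1)(a_\alpha f_\alpha - a_\beta f_\beta)$ on $\calC$ and hence everywhere. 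Koecher then extends $h$ across the codimension-two locus $W_1$, and one unwinds at the end using $a_\alpha \neq a_\beta$ to conclude that $f_\alpha$ and $f_\beta$ are themselves classical.
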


For any $\gerp\in\SS$, let $a_{\gerp}=\alpha_\gerp(Frob_\gerp)$, and $b_{\gerp}=\beta_\gerp(Frob_\gerp)$. Then, $a_{\gerp},b_{\gerp}$ are nonzero, and  $a_{\gerp} \neq b_{\gerp}$ for all $\gerp\in \SS$. One can repeat Buzzard-Taylor's argument \cite{BuzzardTaylor} using  Gee's results on companion Hilbert modular forms \cite{Gee}, and  applying Taylor-Wiles arguments \`a la Diamond/Fujiwara \cite{Fujiwara}, to produce a collection of $2^{|\SS|}$ overconvergent Hilbert eigenforms
\[
 \{f_T: T \subset \SS\},
 \]
 of weight $(1,\cdots,1)$ with  identical Hecke eigenvalues corresponding to prime-to-$p$ ideals, and such that  $U_\gerp(f_T)=a_{\gerp} f_T$ if $\gerp \in T$, and $U_\gerp(f_T)=b_{\gerp} f_T$ if $\gerp\not\in T$, which satisfy  $\rho\cong\rho_{f_T}$ for all $T\subset \SS$. A detailed argument is written in  \S 7 of \cite{Sasaki2}, where Kisin's modularity results have been used instead. The assumption that $p$ is split in the totally real field is not necessary for the argument presented there, as long as $p\neq 2$. In what follows, we show that the existence of $\{f_T: T \subset \SS\}$ as described above, implies that they are all classical, proving the theorem.

\subsection{Analytic continuation and gluing.} In the following, we prove that certain overconvergent Hilbert modular forms are classical. This applies to finish the proof of Theorem \ref{Theorem: modularity}. As before (starting from \S \ref{inert assumptiom}), for clarity of presentation, we will write the proof in the case $p$ is inert in $L$. The general case will be discussed in \S \ref{Section: Appendix}.

\begin{thm}\label{Theorem: Main}
Assume $p$ is inert in $\ol$. Let $f_1,f_2$ be two overconvergent Hilbert modular eigenforms of level $\Gamma_{00}(N)$ and weight $\underline{k}$. Assume that for $i=1,2$, $f_i$ is a $U_p$-eigenform with eigenvalue $a_{i,p}\neq 0$, and that $a_{1,p}\neq a_{2,p}$. Furthermore, assume that the Hecke eigenvalues of $f_1$ and $f_2$ are equal at all ideals prime to $p\ol$. Then $f_1$ and $f_2$ are classical and, indeed, the two $p$-stabilizations of a Hilbert modular form of level $\Gamma_{00}(N)$.
\end{thm}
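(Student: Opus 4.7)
The strategy follows the \emph{glue--descend--Koecher} scheme of Buzzard--Taylor \cite{BuzzardTaylor}, with the enlarged extension region $\calR$ of Proposition~\ref{Proposition: Analytic continuation} compensating for the fact that $\calV\cup w^{-1}(\calV)$ is too small in the inert Hilbert setting. The plan is to construct a classical Hilbert modular form on $\tXrig$ whose two $p$-stabilisations are $f_1$ and $f_2$. The starting point is Proposition~\ref{Proposition: Analytic continuation}, which extends $f_1$ and $f_2$ from a neighbourhood of the ordinary locus to all of $\calR$; simultaneously, $w^{\ast}f_1$ and $w^{\ast}f_2$ are automatically defined on the anti-canonical locus $w^{-1}(\calV)\subset\tYrigtau$, since $f_1$ and $f_2$ themselves extend to $\calV$ by Proposition~\ref{Proposition: canonical analytic continuation}.

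Motivated by the classical identity $f_i=\pi^{\ast}F-a_{3-i,p}\,w^{\ast}(\pi^{\ast}F)$ characterising the two $p$-stabilisations of a putative classical $F$ (using $\pi'=\pi\circ w$), I would define
\[
F_1:=\frac{a_{1,p}f_1-a_{2,p}f_2}{a_{1,p}-a_{2,p}}\ \text{ on }\calR,\qquad F_2:=\frac{w^{\ast}f_1-w^{\ast}f_2}{a_{1,p}-a_{2,p}}\ \text{ on }w^{-1}(\calV).
\]
Each is a candidate for $\pi^{\ast}F$ on its domain, once one notes that $\pi^{\ast}F=w^{\ast}((\pi\circ w)^{\ast}F)$ and $w^{2}=\mathrm{id}$. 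The first gluing task is to show $F_1=F_2$ on the overlap $\calR\cap w^{-1}(\calV)\subset\tYrigtau(g-1,g-\sum_{i=1}^{g-1}1/p^{i})$. This should be forced by the $U_p$-eigenform relations for $f_1,f_2$ combined with Lemma~\ref{Lemma: saturated}, which ensures that on that annulus $\pi$-fibres lie entirely inside $\calR$, so one can compare the two expressions by expanding $U_p f_j=a_{j,p}f_j$ as an average over $\Sib(Q)$ and matching term by term.

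The resulting glued section $F_{\mathrm{glued}}$ on $\calR\cup w^{-1}(\calV)$ is, by the same fibre-wise comparison, constant on those $\pi$-fibres it contains in full, hence descends to a section $F$ of $\omega^{\underline k}$ on the admissible open $U:=\pi(\calR\cup w^{-1}(\calV))\subset\tXrig$. To finish, I would show that $\mathrm{sp}(\tXrig\setminus U)$ is contained in a closed subset of $\tilde{\overline X}$ of codimension $\geq 2$, by enumerating which $W_{\varphi,\eta}$-strata of $\Ybar$ are missed by $\calR\cup w^{-1}(\calV)$ and checking that their $\pi$-images meet at least two transverse Ekedahl--Oort divisors in $\Xbar$. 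The rigid analytic Koecher principle (Proposition~\ref{Proposition: Koecher}) then extends $F$ uniquely to a section of $\omega^{\underline k}$ on all of $\tXrig$, i.e.\ a classical Hilbert modular form of weight $\underline{k}$ and level $\Gamma_{00}(N)$; a direct Hecke-eigenvalue check identifies $f_1,f_2$ as its two $p$-stabilisations.

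The principal obstacles are twofold. The gluing identity $F_1=F_2$ on the overlap is the technical heart: it demands a precise understanding of how $w$ and $U_p$ interact on points $Q$ with $\nu(Q)\in(g-1,g-\sum_{i=1}^{g-1}1/p^{i})$, and this is exactly where the Breuil--Kisin computations of \S\ref{Section: Kisin modules}---in particular Corollary~\ref{Corollary: inequalities} and the explicit normal form of sibling subgroups from Proposition~\ref{Proposition: Kisin Modules}---must be invoked to pin down the shape of $\Sib(Q)$. The codimension-two estimate, although combinatorial, is equally delicate because one must rule out ``isolated'' missed components in the Kottwitz--Rapoport stratification of $\Ybar$; here Theorems~\ref{Theorem: C cap YF cap YV} and~\ref{Theorem: generic type}, asserting respectively that every irreducible component of $Z_{\varphi,\eta}$ meets $Z_{\BB,\emptyset}\cap Z_{\emptyset,\BB}$ and has generic type $\varphi\cap\eta$, are the decisive inputs that prevent pathologies.
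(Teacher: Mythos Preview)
Your overall architecture matches the paper's, but the gluing step as you describe it has a genuine gap. You write that the identity $F_1=F_2$ on the overlap ``should be forced by the $U_p$-eigenform relations for $f_1,f_2$''. It cannot: expanding $a_{j,p}f_j(Q)=U_pf_j(Q)$ only yields the \emph{averaged} relation $(a_{1,p}f_1-a_{2,p}f_2)(Q)=p^{-g}\sum_{Q'\in\Sib(Q)}w(f_1-f_2)(Q')$, not the pointwise equality $(a_{1,p}f_1-a_{2,p}f_2)(Q)=w(f_1-f_2)(Q)$ you need. The paper instead proves a sharper identity on $\pi_1^{-1}(\calR)$ (Proposition~\ref{Proposition: equality of sections}),
\[
\pi_1^*(a_{1,p}f_1-a_{2,p}f_2)={\rm pr}^*\pi_2^*(f_1-f_2),
\]
by a $q$-expansion comparison at the unramified cusps. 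This is where the hypothesis you never invoke---equality of prime-to-$p$ Hecke eigenvalues---is actually used, and the argument also requires connectedness of each $\calR_\gera$ (Lemma~\ref{Lemma: connectedness}) to propagate the cuspidal identity across $\pi_1^{-1}(\calR)$. Only then does the saturation Lemma~\ref{Lemma: saturated} convert this into the pointwise gluing on $\calC$ (Proposition~\ref{Proposition: Intersection}), via a short pigeonhole using that $\pi$-fibres have $p^g+1\geq 3$ points.

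Your descent and codimension-two plan also diverges from the paper and is more fragile. You propose to descend from $\calR\cup w^{-1}(\calV)$ to its $\pi$-image and then bound the complement by enumerating missed Kottwitz--Rapoport strata. But constancy of the glued section on $\pi$-fibres is only established on $\calC$, not throughout $\calR\cup w^{-1}(\calV)$, so direct descent there is not available. The paper sidesteps this: it observes that $\calR\cup w^{-1}(\calR')=\tYrigtau$ (Lemma~\ref{Lemma: union}), which is $\pi$-saturated by construction, pushes the section down to $\tXrigtau$ via the trace $\pi_*$, checks on $\calC$ that $\pi^*\pi_*$ returns $(p^g+1)$ times the original (Corollary~\ref{Corollary: average}), and applies Koecher across $W_1=\bigcup_{|\tau|>1}W_\tau$, whose codimension $\geq 2$ is immediate from \cite{GorenOort}. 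No stratum enumeration is needed; Theorems~\ref{Theorem: C cap YF cap YV} and~\ref{Theorem: generic type} enter only in the connectedness Lemma~\ref{Lemma: connectedness}, not in any codimension estimate.
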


We prove the theorem in what follows. First, we use the condition on the $q$-expansions to prove an equality of sections of $\omega^{\underline{k}}$ over a certain region of $\tYrig^{(p)}$. We need a lemma. For any choice of the polarization module $(\gera,\gera^+)$, denote the corresponding connected component of $\tYrig$ by $\tilde{\gerY}_{{\rm rig},\gera}$, and that of $\Ybar$ with $\Ybar_\gera$. For any admissible open $\calW$ of $\tYrig$, set $\calW_\gera=\calW \cap \tilde{\gerY}_{{\rm rig},\gera}$. Applying this convention to $\calR$, we can define $\calR_\gera$.

\begin{lem}\label{Lemma: connectedness}
For any choice of the polarization module $(\gera,\gera^+)$, the region $\calR_\gera$ is connected.
\end{lem}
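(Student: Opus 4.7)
The plan is to exhibit a connected ``anchor'' inside $\calR_\gera$ and construct rigid analytic paths from every other point of $\calR_\gera$ to this anchor. The natural anchor is the $F$-ordinary tube
\[
\calO_\gera := \spe^{-1}(W_{\BB,\emptyset,\gera}) \subset \tilde{\gerY}_{{\rm rig},\gera},
\]
which lies inside $\calR_\gera$ because on the stratum $W_{\BB,\emptyset}$ the valuation vector is identically zero (so $\nu = 0 < g - \sum_{i=1}^{g-1}1/p^i$) and the type $\tau(\pi(\Qbar))$ is empty (hence $|\tau|\le 1$). First I would check that $\calO_\gera$ is connected: the canonical subgroup provides a section of $\pi$ over the rigid analytic ordinary locus of $\tilde{\gerX}_{{\rm rig},\gera}$, producing an isomorphism between $\calO_\gera$ and that ordinary locus, which is a Zariski dense admissible open in the smooth geometrically connected rigid analytic variety $\tilde{\gerX}_{{\rm rig},\gera}$, and hence is connected.

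Next, for an arbitrary $Q \in \calR_\gera$ I would exhibit a rigid analytic arc inside $\calR_\gera$ joining $Q$ to a point of $\calO_\gera$. By Lemma~\ref{Lemma: type one valuations}, the valuation vector of $Q$ has the very restricted shape $v(\beta_0,m,g,x)$ with $\beta_0 \in \BB$, $0 \le m \le g-1$, $x \in [0,1)$, and $m+x = \nu(Q) < g - \sum_{i=1}^{g-1}1/p^i$. Using the explicit local deformation coordinates of Equation~(\ref{equation:local deformation ring at Q bar -- arithmetic version}) together with the cube correspondence of Theorem~\ref{theroem: theorem of cube}, I would continuously shrink $x$ to $0$ along a rigid analytic disc, then cross the node $x_{\sigma^m\beta_0}\,y_{\sigma^m\beta_0} = p$ into the adjacent stratum in which $m$ has dropped by one, and iterate until $\nu(Q) = 0$, landing in $\calO_\gera$. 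At non-integer values of $\nu$, Lemma~\ref{Lemma: automatic type one} keeps the arc automatically inside $\tYrigtau$; at the finitely many integer transitions, Theorem~\ref{Theorem: generic type} identifies the generic type of each traversed irreducible component and, combined with Lemma~\ref{lemma: type and phi eta}, shows that the chosen branch still satisfies $|\tau|\le 1$.

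The principal obstacle will be executing the nodal transitions at integer values $\nu \in \{1,\ldots,g-1\}$, where the local model (\ref{equation: local deformation ring at Q bar}) contains $|I(\Qbar)|$ independent nodes $x_\beta y_\beta = 0$ and one must select the branch corresponding to the prescribed adjacent irreducible component. The bound $\nu(Q) < g - \sum_{i=1}^{g-1}1/p^i$ is precisely strong enough to guarantee that at each crossing only a single node needs to be traversed, while Theorem~\ref{Theorem: C cap YF cap YV} certifies that the chosen irreducible component meets $W_{\BB,\emptyset}$, so the inductive descent never stalls. Gluing the resulting arcs with the connectedness of $\calO_\gera$ established in the first paragraph then yields the connectedness of $\calR_\gera$.
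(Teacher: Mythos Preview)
Your overall strategy—anchor at the ordinary tube and descend through strata toward it—is the same as the paper's, but your execution contains a genuine gap. You invoke Theorem~\ref{Theorem: C cap YF cap YV} to certify that ``the chosen irreducible component meets $W_{\BB,\emptyset}$, so the inductive descent never stalls.'' But that theorem says $C$ meets $Z_{\BB,\emptyset}\cap Z_{\emptyset,\BB}=W_{\BB,\BB}$, the \emph{superspecial} locus, where $\varphi=\eta=\BB$ and hence (by Lemma~\ref{lemma: type and phi eta}) $\tau=\BB$. Since $g>1$, these points are \emph{outside} $\Ybar^{|\tau|\le 1}$, so the theorem gives you no intersection inside the region you care about. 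The paper handles this carefully: Theorem~\ref{Theorem: C cap YF cap YV} together with Theorem~\ref{theorem: fund'l facts about the stratificaiton of Ybar}(5) only show that $C$ meets the next stratum $Z_{\varphi',\ell(\varphi')^c}$ somewhere; one then needs Theorem~\ref{Theorem: generic type} to see that $C\cap Z_{\varphi',\ell(\varphi')^c}$ is a union of irreducible components of $Z_{\varphi',\ell(\varphi)^c}$, each of generic type $\varphi'\cap\ell(\varphi)^c=\{\beta\}$, and hence meets $\Ybar^{|\tau|\le 1}$ densely. Without this step, your descent could stall entirely inside the excluded locus.

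There are two secondary problems. First, connectedness of rigid spaces is not proved by constructing ``arcs'': the phrase ``continuously shrink $x$ to $0$ along a rigid analytic disc, then cross the node'' is not a valid argument—within $\spe^{-1}(\Qbar)$ the coordinate $x_{\sigma^m\beta_0}$ ranges over an open annulus and never reaches $0$, and passing to an adjacent component requires exhibiting a connected locally closed subset of the special fibre containing both, then using that its tube is connected. The paper makes this explicit by working on $\Ybar$ and proving inductively that the subsets $W(i)$ are connected. Second, your justification that the ordinary tube is connected (``Zariski dense admissible open in a connected rigid variety, hence connected'') is not correct as stated: the rigid ordinary locus is a tube over a special-fibre open, not the complement of a closed analytic subvariety, so density alone does not suffice; one should argue instead that the ordinary locus in the smooth irreducible special fibre $\tilde{\Xbar}_\gera$ is connected and that tubes over connected opens are connected. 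Finally, the range $\nu\in(g-1,\,g-\sum_{i=1}^{g-1}1/p^i)$ needs its own treatment (the paper does this via quasi-compactness and the explicit annular local model), which your sketch omits.
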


\begin{proof}  It is enough to show that $\calR_\gera \cap \Yrig$ is connected. To ease the notation, throughout this proof, we continue to denote $\calR_\gera \cap\Yrig$ with $\calR_\gera$. We first show that $\calR_\gera[0,g-1]$ is connected.  For $0 \leq m \leq g-1$, let 
\[
\Phi_m=\{ \varphi \subset \BB|   \exists \beta \in \BB: \varphi=\{\beta,\sigma^{-1}\circ\beta,\cdots,\sigma^{1+m-g}\circ\beta\} \}.
\]
and, for $0 \leq i \leq g-1$, set
\[
W^\prime(i)=\bigcup_{0 \leq m \leq i}\ \bigcup_{\varphi \in \Phi_m} Z^\prime_{\varphi,\ell(\varphi)^c} 
\]
where, $Z^\prime_{\varphi,\ell(\varphi)^c}=Z_{\varphi,\ell(\varphi)^c}$ if $m<g-1$, and $Z^\prime_{\varphi,\ell(\varphi)^c}= Z_{\varphi,\ell(\varphi)^c}-Z_{\emptyset,\BB}$ if $m=g-1$. Define
\[
W(i)=W^\prime(i) \cap {\Ybar_\gera}^{|\tau| \leq 1}.
\]
Using Theorem \ref{theroem: theorem of cube}, it is easy to see that  $\calR_\gera[0,g-1]=\spe^{-1}(W(g-1))$. The claim would follow if we show $W(g-1)$ is connected. We argue by induction. For $i=0$, $W(0)$ contains the ordinary locus in $(Z_{\BB,\emptyset})_\gera$, and, hence, is a dense subset in (the nonsingular) $(Z_{\BB,\emptyset})_\gera$. It follows that $W(0)$ is irreducible and, hence, connected. Now, assume $W(i)$ is connected. Pick any $\Qbar\in W(i)-W(i-1)$, and set $\varphi=\varphi(\Qbar)=\{\beta,\cdots,\sigma^{1+i-g}\circ\beta\}$ for some $\beta\in\BB$. Let $\varphi^\prime=\{\sigma^{i-g}\circ\beta\}\cup \varphi$. Let $C$ be an irreducible component of $Z_{\varphi,\ell(\varphi)^c}$ which contains $\Qbar$. Theorems \ref{Theorem: C cap YF cap YV} and \ref{theorem: fund'l facts about the stratificaiton of Ybar}  imply that  $C$ intersects $Z_{\varphi^\prime,\ell(\varphi^\prime)^c} \subset W^\prime(i-1)$. To show that $W(i)$ is connected, it is enough to show that $C\cap {\Ybar_\gera}^{|\tau| \leq 1}$ is connected, and $Z_{\varphi^\prime,\ell(\varphi^\prime)^c} \cap C\cap {\Ybar_\gera}^{|\tau| \leq 1}$ is nonempty. The first statement follows from Theorem \ref{Theorem: generic type}, which shows that type is generically $\varphi \cap \ell(\varphi)^c=\{\beta\}$ on $Z_{\varphi,\ell(\varphi)^c}$, and hence on $C$. For the second statement, note that $Z_{\varphi^\prime,\ell(\varphi^\prime)^c} \cap C$ is a union of irreducible components of 
\[
Z_{\varphi^\prime,\ell(\varphi^\prime)^c} \cap  Z_{\varphi,\ell(\varphi)^c}=Z_{\varphi^\prime,\ell(\varphi)^c}.
\]
By Theorem \ref{Theorem: generic type},  on every such irreducible component the type is generically  $\varphi^\prime \cap \ell(\varphi)^c=\{\beta\}$. Hence $Z_{\varphi^\prime,\ell(\varphi^\prime)^c} \cap C\cap {\Ybar_\gera}^{|\tau| \leq 1}$ is Zariski dense in $Z_{\varphi^\prime,\ell(\varphi^\prime)^c} \cap C$, and hence is nonempty. It just remains to note that the same argument works in the last step of induction (where definitions are a bit modified) as   $Z^\prime_{\varphi,\ell(\varphi)^c}$ is a dense open subset of $Z_{\varphi,\ell(\varphi)^c}$, and $Z_{\varphi,\ell(\varphi)^c}-Z^\prime_{\varphi,\ell(\varphi)^c}$ doesn't intersect $Z_{\varphi^\prime,\ell(\varphi^\prime)^c} \cap  {\Ybar_\gera}^{|\tau| \leq 1}$.

We now prove that $\calR_\gera$ is connected.  It is enough to show that $\calR_\gera[0,g-1+v]$ is connected for all rational $0<v<1-\sum_{i=1}^{g-1} 1/p^i$.  Assume this does not hold for some such $v$.  Let $\calW$ be a connected component of $\calR_\gera[0,g-1+v]$ which doesn't intersect the connected domain $\calR_a[0,g-1]$. Since $\calR_\gera[0,g-1+v]$ is quasi-compact, so will be $\calW$.  Therefore, there is $\epsilon>0$ such that $\Yrig[0,g-1+\epsilon] \cap \calW =\emptyset$. Let $Q\in \calW$ and assume $\tau(\pi(\Qbar))=\{\beta_0\}$. By Lemma \ref{Lemma: type one valuations},  $I(\Qbar)=\{\sigma^{g-1}\circ\beta_0\}$, and $\nu(Q)=g-1+\nu(x_{\sigma^{g-1}\circ\beta_0}(Q))>g-1+\epsilon$. In fact, by definition of valuations, this statement is also valid for every $Q^\prime \in \spe^{-1}(\Qbar)\cap\calW$. Let $\calA=\spe^{-1}(\Qbar)[0,g-1+v] \subset \calR_\gera$. Using the isomorphism \ref{equation:local deformation ring at Q bar -- arithmetic version}, it follows that $\spe^{-1}(\Qbar)$ is a product of $g-1$ open discs and one annulus with variable $x_{\sigma^{g-1}\circ\beta_0}$, and $\calA$ is the subset given by $\nu(x_{\sigma^{g-1}\circ\beta_0})\leq v$. Therefore, $\calA$ is connected and we must have $\calA\subset \calW$. But this implies that on $\calA$, we must have $\nu(x_{\sigma^{g-1}\circ\beta_0})>\epsilon>0$, which is clearly not true. This completes the proof of the lemma.

\end{proof}

\begin{dfn} 
We define  $\calR^\prime=\tYrigtau[0,1)$ and $\calC=\tYrigtau(g-1,g-\sum_{i=1}^{g-1} 1/p^i)$. 
\end{dfn}

\begin{lem} \label{Lemma: union}
We have  $\tYrigtau=\calR \cup w^{-1}(\calR^\prime)$.
\end{lem}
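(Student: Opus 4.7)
The plan is to partition $\tYrigtau$ by the value of the total valuation $\nu$ and exploit the Atkin--Lehner identity $\nu(w(Q))=g-\nu(Q)$ furnished by Proposition \ref{Proposition: valuations under w}. Since $p\geq 2$, we have $\sum_{i=1}^{g-1}1/p^i<1$, so $g-\sum_{i=1}^{g-1}1/p^i>g-1$. Given $Q\in\tYrigtau$, either $\nu(Q)<g-\sum_{i=1}^{g-1}1/p^i$, which places $Q$ directly in $\calR$, or $\nu(Q)\geq g-\sum_{i=1}^{g-1}1/p^i>g-1$, in which case $\nu(w(Q))=g-\nu(Q)<1$. In the latter case, to conclude $Q\in w^{-1}(\calR^\prime)$ it suffices to verify the type condition $w(Q)\in\tYrigtau$, i.e., $|\tau(\pi(w(Q)))|\leq 1$.

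For the type check, write $Q=(\uA,C)$. If $\pi(\Qbar)$ is ordinary then so is $\pi(w(\Qbar))=\uA/C$, since quotients of ordinary abelian varieties by finite flat subgroup schemes are ordinary; then $\tau(\pi(w(\Qbar)))=\emptyset$ and $w(Q)\in\tYrigtau$ is automatic. For non-ordinary $Q$, with $\tau(\pi(\Qbar))=\{\beta_0\}$, Lemma \ref{Lemma: type one valuations} writes the valuation vector of $Q$ as $v(\beta_0,m,g,x)$ for some $0\leq m\leq g-1$ and $x\in[0,1)$, and the hypothesis $\nu(Q)>g-1$ pins down $m=g-1$ with $x\in(0,1)$. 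Applying Proposition \ref{Proposition: valuations under w} coordinate-wise yields that the valuation vector of $w(Q)$ vanishes away from $\sigma^{g-1}\circ\beta_0$ and equals $1-x\in(0,1)$ there; in the language of Definition \ref{Definition: type one valuations} this is exactly $v(\sigma^{g-1}\circ\beta_0,0,g,1-x)$, and $\nu(w(Q))=1-x\notin\ZZ$. Lemma \ref{Lemma: automatic type one} then gives $w(Q)\in\tYrigtau$, so $w(Q)\in\calR^\prime$, completing the argument.

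There is no serious obstacle beyond careful bookkeeping of valuation vectors; the only delicate point is the boundary $\nu(Q)=g-\sum_{i=1}^{g-1}1/p^i$, which is excluded from the open set $\calR$ but still falls under the non-ordinary argument above (with $x=1-\sum_{i=1}^{g-1}1/p^i\in(0,1)$), and the degenerate ordinary case $\nu(Q)=g$, which is absorbed into the trivial ordinary argument. The entire proof hinges on the numerical inequality $g-\sum_{i=1}^{g-1}1/p^i>g-1$, which is precisely the reason $\calR$ was defined with this radius: the union $\calR\cup w^{-1}(\calR^\prime)$ is engineered to leave no gap inside $\tYrigtau$.
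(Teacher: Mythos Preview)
Your proof is correct and follows the same approach as the paper's, which simply asserts the equality $w^{-1}(\calR')=\tYrigtau(g-1,g]$ and cites Lemma \ref{Lemma: automatic type one}; you have unpacked exactly this computation. Note that you only verify the inclusion $\tYrigtau\subseteq\calR\cup w^{-1}(\calR')$, whereas the lemma is stated as an equality; the remaining containment $w^{-1}(\calR')\subseteq\tYrigtau$ follows by the identical argument with the roles of $Q$ and $w(Q)$ reversed (since $w$ is an involution), and is in any case the direction not needed for the application in Corollary \ref{Corollary: gluing}.
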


\begin{proof} The claim follows from the equality $w^{-1}(\calR^\prime)=\tYrigtau(g-1,g]$. This equality follows from Lemma \ref{Lemma: automatic type one}.
\end{proof}

The same proof shows that 

\begin{lem} \label{Lemma: intersection}
We have  $\calC = \calR \cap w^{-1}(\calR^\prime)$.
\end{lem}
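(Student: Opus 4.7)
The plan is to imitate the proof of Lemma~\ref{Lemma: union}, which hinges on the identity $w^{-1}(\calR') = \tYrigtau(g-1,g]$; once this identity is in hand, the current statement collapses to an elementary comparison of intervals. So my first step would be to re-derive (or simply invoke) that identity. The ``$\nu$-side'' of it is immediate from Proposition~\ref{Proposition: valuations under w}: summing over $\beta$ gives $\nu(w(Q)) = g - \nu(Q)$, so the condition $\nu(w(Q)) < 1$ defining $\calR'$ translates to $\nu(Q) > g-1$. The ``type-side'' is where Lemma~\ref{Lemma: automatic type one} does the work: a point $Q \in \tYrigtau$ with $\nu(Q) \in (g-1,g)$ has, by Lemma~\ref{Lemma: type one valuations}, a valuation vector of the form $v(\beta_0, g-1, g, x)$ with $x \in (0,1)$, so $w(Q)$ has valuation vector $v(\sigma^{g-1}\circ\beta_0, 0, g, 1-x)$, and Lemma~\ref{Lemma: automatic type one} then places $w(Q)$ in $\tYrigtau$ (the endpoint case $\nu(Q)=g$ is ordinary and sits in $\tYrigtau$ trivially since $\tau=\emptyset$).

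Second, I would simply intersect the two pieces. By definition $\calR = \tYrigtau[0, g-\sum_{i=1}^{g-1} 1/p^i)$, and by the step above $w^{-1}(\calR') = \tYrigtau(g-1,g]$. Since $\sum_{i=1}^{g-1} 1/p^i < 1$ for any prime $p\geq 2$, one has
\[
g-1 < g-\sum_{i=1}^{g-1}\frac{1}{p^i} < g,
\]
so the intersection of the two intervals inside $[0,g]$ is precisely $(g-1,\, g-\sum_{i=1}^{g-1}1/p^i)$. Intersecting with $\tYrigtau$ yields $\tYrigtau(g-1, g-\sum_{i=1}^{g-1} 1/p^i) = \calC$, as desired.

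I do not expect any genuine obstacle here: the whole content is packaged in Lemmas~\ref{Lemma: type one valuations} and~\ref{Lemma: automatic type one} plus Proposition~\ref{Proposition: valuations under w}, all of which have already been used in Lemma~\ref{Lemma: union}. The only mildly delicate point is making sure the argument for $w^{-1}(\calR')=\tYrigtau(g-1,g]$ indeed gives both inclusions (i.e.\ that $w$ sends the slice $\tYrigtau(g-1,g]$ bijectively onto $\tYrigtau[0,1)$ on the nonordinary part), and this is exactly what the symmetry of Lemma~\ref{Lemma: type one valuations} between $m=g-1$ and $m=0$ delivers via Lemma~\ref{Lemma: automatic type one}.
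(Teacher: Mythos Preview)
Your proposal is correct and follows exactly the paper's approach: the paper proves Lemma~\ref{Lemma: intersection} by the same argument as Lemma~\ref{Lemma: union}, namely by invoking the identity $w^{-1}(\calR')=\tYrigtau(g-1,g]$ (deduced from Lemma~\ref{Lemma: automatic type one}) and then simply intersecting intervals. Your write-up just spells out in a bit more detail why that identity holds.
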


\begin{rmk} In an earlier version of this article, we had performed the (forthcoming) gluing of two forms on $\calR$ and $w^{-1}(\calR)$. This argument is done more simply and concisely  in this version, by gluing forms, instead, on $\calR$ and $w^{-1}(\calR^\prime)$. The main advantage is that Proposition \ref{Proposition: Intersection} immediately implies that the two forms in question agree on the intersection $\calR \cap w^{-1}(\calR^\prime)=\calC$, whereas, we have  $\calC \subsetneq \calR \cap w^{-1}(\calR)$, and it would require more work to show that the two forms agree on this intersection.

We have applied a similar modification to the argument in the general unramified case dealt with in \S \ref{Section: Appendix}, in Lemma \ref{Lemma: induction step}, and the lead-up to it.

\end{rmk}

 By Proposition \ref{Proposition: Analytic continuation}, we know that both $f_1,f_2$ can be extended to $\calR$. Recall from Remark \ref{Remark: correspondence on R} that $\pi_1^{-1}(\calR) \subset \pi_2^{-1}(\calR)$.

\begin{prop}\label{Proposition: equality of sections} Let $f_1,f_2$ be normalized overconvergent Hilbert eigenforms as in Theorem \ref{Theorem: Main}. We have the following equality over $\pi_1^{-1}(\calR)$:
\[
\pi_1^*(a_{1,p}f_1-a_{2,p}f_2)={\rm pr}^*\pi_2^*(f_1-f_2),
\]
where ${\rm pr}^*: \pi_2^* \omega^{\underline{k}} \arr \pi_1^* \omega^{\underline{k}}$ is the natural map defined in \S \ref{Section: U_p}.

 \end{prop}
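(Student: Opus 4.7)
The plan is to verify the identity by a $q$-expansion computation at a single cusp of $\tilde{\gerY}^{(p)}_{\rm rig}$ per connected component, then propagate by rigid analytic continuation using the connectedness established in Lemma~\ref{Lemma: connectedness}.

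First, both sides are well-defined sections of $\pi_1^*\omega^{\underline{k}}$ on $\pi_1^{-1}(\calR)$: the left-hand side because $f_1,f_2$ extend analytically to $\calR$ by Proposition~\ref{Proposition: Analytic continuation}, and the right-hand side because Remark~\ref{Remark: correspondence on R} gives $\pi_1^{-1}(\calR)\subset\pi_2^{-1}(\calR)$, so ${\rm pr}^*\pi_2^*f_i$ is defined on $\pi_1^{-1}(\calR)$.

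By Lemma~\ref{Lemma: connectedness}, each $\calR_\gera$ is connected, and it contains formal neighborhoods of the cusps of $\tilde{\gerY}_{{\rm rig},\gera}$ (since cusps specialize into the ordinary locus $\tYrig[0,0]\subseteq\calR$). Consequently, every connected component of $\pi_1^{-1}(\calR)$ contains a formal neighborhood of a cusp of $\tilde{\gerY}^{(p)}_{\rm rig}$, and by the rigid analytic $q$-expansion principle it then suffices to verify the identity at the level of $q$-expansions at one such cusp per connected component.

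At a cusp of $\tilde{\gerY}^{(p)}_{\rm rig}$ corresponding to the Tate HBAV $\uA^{\rm Tate}_q$ equipped with an étale $\ol$-subgroup $C_0$ and a multiplicative complement $D_0$ of $C_0$ in $\uA^{\rm Tate}_q[p]$, $\pi_1$ maps to the cusp $(\uA^{\rm Tate}_q,C_0)$ of $\tYrig$ while $\pi_2$ maps to the cusp $(\uA^{\rm Tate}_q/D_0,(C_0+D_0)/D_0)$, whose underlying Tate object is $\uA^{\rm Tate}_{q^p}$. Writing $f_i=\sum_\nu c_i(\nu)q^\nu$ at the relevant cusps, the $U_p$-eigen relation yields $c_i(\nu p)=a_{i,p}c_i(\nu)$ for all $\nu$, and the identical prime-to-$p$ Hecke eigenvalues of the normalized eigenforms yield $c_1(\nu)=c_2(\nu)$ whenever $\nu$ is coprime to $p\ol$. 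Substituting these into the $q$-expansions of both sides, and unwinding the ${\rm pr}^*$-factor on the Tate trivializing differential arising from the $p^g$-isogeny $\uA^{\rm Tate}_q\to\uA^{\rm Tate}_q/D_0$, one checks the claimed identity coefficient-by-coefficient.

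The main technical obstacle is the careful bookkeeping of the ${\rm pr}^*$-differential factor at the Tate object and the correct comparison between $q$-expansions at the two distinct cusps $(\uA^{\rm Tate}_q,C_0)$ and $(\uA^{\rm Tate}_{q^p},(C_0+D_0)/D_0)$ of $\tYrig$; once these are set up correctly, the coefficient-level verification using the two Hecke constraints is straightforward, and rigid analytic continuation on each connected component completes the proof.
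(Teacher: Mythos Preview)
Your strategy---$q$-expansion comparison at cusps plus propagation via connectedness (Lemma~\ref{Lemma: connectedness})---is exactly the paper's. However, you have swapped the \'etale/multiplicative roles of $C_0$ and $D_0$. For a cusp $(\uA^{\rm Tate}_q,C_0,D_0)$ to lie in $\pi_1^{-1}(\calR)$, its $\pi_1$-image $(\uA^{\rm Tate}_q,C_0)$ must lie in $\tYrig[0,0]\subset\calR$, which forces $C_0$ to be the \emph{multiplicative} (canonical) subgroup $\langle\zeta\rangle$; the complementary $D_0=\langle q^c\rangle$ (with $c\in\gera^{-1}\setminus p\gera^{-1}$) is then \emph{\'etale}. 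With your assignment the cusp would land in $\tYrig[g,g]$, outside $\calR$, and the argument as written does not apply. In the correct setup the isogeny ${\rm pr}\colon\uA^{\rm Tate}_q\to\uA^{\rm Tate}_q/D_0$ is \'etale, so ${\rm pr}^*$ acts as the identity on the trivializing differential $\eta^{\underline{k}}$---there is no factor to unwind---and the quotient has period lattice $(p\gera)^{-1}$ rather than parameter $q^p$.

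One further point: your ``consequently'' step, that every connected component of $\pi_1^{-1}(\calR)$ contains a cusp, does not follow from connectedness of $\calR_\gera$ alone. One needs that $\pi_1$ is finite flat, so that each connected component of $\pi_1^{-1}(\calR_\gera)$ surjects onto $\calR_\gera$ and hence meets the preimage of the unramified cuspidal locus; the paper spells this out.
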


\begin{proof} In this argument, we extend scalars to  a finite extension of the base containing an $Np$-th root of unity. Let $(\gera,\gera^+)$ be a representative of a class in $cl^+(L)$.  Fix a rational polyhedral cone decomposition $\{\sigma_\alpha\}$ on $\gera_\RR$. Consider an unramified cusp $Ta_\gera=((\underline{\GG_m\! \otimes\! \gerd_L^{-1})/q^{\gera^{-1}}},<\!{\zeta}\!>)$ (underline indicates the inclusion of standard PEL structure) inducing a local chart  $(Ta_\gera)_{\sigma_\alpha}$ of the universal family over $\tYrig$ for any fixed $\sigma_\alpha$. Here, $\zeta$ denotes an $\calO_L$-generator of $(\GG_m\otimes \gerd_L^{-1})[p]$. Let $\eta^{\underline{k}}$ denote a generator of the sheaf $\omega^{\underline{k}}$ on the base of $Ta_\gera$.


 We can write $f_i((Ta_\gera)_{\sigma_\alpha})=\sum_{\xi \in (\gera^{-1})^+}{a_{i,\xi(\gera)}} q^\xi \eta^{\underline{k}}$ for $i=1,2$. The assumptions on $f_1$, $f_2$ imply that, after normalizing, we can assume $a_{1,\xi(\gera)}=a_{2,\xi(\gera)}$ for all $\xi \not \in p(\gera^{-1})^+$, and that $a_{i,\xi(p\gera)}=a_{i,p}a_{i,\xi(\gera)}$ for all $\xi\in(\gera^{-1})^+$ and $i=1,2$. Let $c \in \gera^{-1}-p\gera^{-1}$. We will calculate both sides of the equality over $(Ta^0_{\gera})_{\sigma_\alpha}=((Ta_\gera)_{\sigma_\alpha},<\!q^c\!>)$, a local chart of the universal family over $\tYrig^{(p)}$.  By $<\!q^c\!>$, we mean the subgroup induced by the  $\calO_L$-submodule generated by $q^c$.
\begin{align*}
\pi_1^*(a_{1,p}f_1-a_{2,p}f_2)((Ta^0_{\gera,j})_{\sigma_\alpha})&=(a_{1,p}f_1-a_{2,p}f_2)((Ta_\gera)_{\sigma_\alpha})\\
&=\sum_{\xi\in(\gera^{-1})^+} (a_{1,p}a_{1,\xi(\gera)}-a_{2,p}a_{2,\xi(\gera)}) q^\xi \eta^{\underline{k}}\\
&=\sum_{\xi\in(\gera^{-1})^+} (a_{1,\xi(p\gera)}-a_{2,\xi(p\gera)}) q^\xi \eta^{\underline{k}}.\\
\end{align*}
On the other hand, we can write:

\begin{align*}
{\rm pr}^*\pi_2^*(f_1-f_2)((Ta^0_{\gera})_{\sigma_\alpha})&={\rm pr}^*((f_1-f_2)((\underline{\GG_m\! \otimes\! \gerd_L^{-1})/{q}^{(p\gera)^{-1}}},<\!\overline{\zeta}\!>))_{\sigma_\alpha})\\
&=\sum_{\xi\in p^{-1}(\gera^{-1})^+} (a_{1,\xi(p\gera)}-a_{2,\xi(p\gera)}) {q}^{\xi} \eta^{\underline{k}}\\
&=\sum_{\xi\in(\gera^{-1})^+}(a_{1,\xi(p\gera)}-a_{2,\xi(p\gera)}) {q}^{\xi} \eta^{\underline{k}}.\\
\end{align*}



 To end the proof, it is enough to show that every connected component of $\pi_1^{-1}(\calR_\gera)$ intersects the unramified cuspidal locus. Since $\pi_1$ is finite and flat, every connected component of $\pi_1^{-1}(\calR_\gera)$ maps onto $\calR_\gera$ (using Lemma \ref{Lemma: connectedness}). Now, $\calR_\gera$ contains $\tYrig[0,0]_\gera$, which, in turn, contains the entire unramified cuspidal locus of $\tilde{\gerY}_{{\rm rig},\gera}$. We are done.

 \end{proof}

 By Proposition \ref{Proposition: Analytic continuation}, the section  $a_{1,p}f_1-a_{2,p}f_2$ can be extended to $\calR$ and $w(f_1-f_2)$ can be extended to  $w^{-1}(\calR^\prime)$. We claim that these sections are equal over the intersection 
\[
\calR \cap w^{-1}(\calR^\prime)=\calC=\tYrigtau(g-1,g-\sum_{i=1}^{g-1} 1/p^i).
\]

\begin{prop}\label{Proposition: Intersection} Over $\calC$, we have:
\[
a_{1,p}f_1-a_{2,p}f_2=w(f_1-f_2).
\]
\end{prop}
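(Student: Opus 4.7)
The plan is to extract a pointwise identity on $\pi_1^{-1}(\calC)$ from Proposition \ref{Proposition: equality of sections} and then combine it with the $\pi$-saturation of $\calC$ from Lemma \ref{Lemma: saturated}.

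First I would unpack the equation
\[
\pi_1^*(a_{1,p}f_1 - a_{2,p}f_2) = \mathrm{pr}^* \pi_2^*(f_1 - f_2)
\]
at a point $R = (\uA, C, D) \in \pi_1^{-1}(\calC) \subset \pi_1^{-1}(\calR)$. Since $p$ is inert, $\uA[p]$ is free of rank $2$ over $\calO_L/p \cong \FF_{p^g}$, so any two distinct $\calO_L$-stable Lagrangians are transverse; the defining condition $C \cap D = 0$ of $\tilde{\gerY}_{\mathrm{rig}}^{(p)}$ therefore forces $C + D = \uA[p]$, and $\pi_2(R) = (\uA/D,\, \uA[p]/D) = w(\uA, D)$. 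Moreover $\pi_1$ has degree $p^g$, so $R \mapsto (\uA, D)$ identifies $\pi_1^{-1}(Q)$ with $\Sib(Q)$. By the very definition of $w$ as $\mathrm{pr}^* w^*$, the displayed equation at $R$ becomes, after identifying fibers of $\omega^{\underline{k}}$ over the common base $\uA$,
\[
(a_{1,p} f_1 - a_{2,p} f_2)(\uA, C) \;=\; w(f_1 - f_2)(\uA, D),
\]
valid for every $Q = (\uA, C) \in \calC$ and every sibling $Q' = (\uA, D) \in \Sib(Q)$.

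Next I would invoke Lemma \ref{Lemma: saturated}: for every $Q \in \calC$ the full fiber $\pi^{-1}(\pi(Q))$ lies in $\calC$, so writing $A := a_{1,p} f_1 - a_{2,p} f_2$ and $W := w(f_1 - f_2)$, the identity $A(Q) = W(Q')$ holds for every ordered pair of distinct points $(Q, Q')$ in any such fiber. The crucial observation is that the left-hand side depends only on $Q$, not on the choice of sibling $Q'$.

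The conclusion is a short combinatorial step on the fiber. Since $p > 2$ and $g \geq 2$, $|\Sib(Q)| = p^g \geq 2$; pick distinct $Q_1, Q_2 \in \Sib(Q)$. Applying the identity at $(Q, Q_1)$ and $(Q, Q_2)$ gives $W(Q_1) = A(Q) = W(Q_2)$. Applying it at $(Q_1, Q)$ and $(Q_1, Q_2)$ (admissible since $Q, Q_2 \in \Sib(Q_1)$ by symmetry of the sibling relation) gives $W(Q) = A(Q_1) = W(Q_2) = A(Q)$. Hence $w(f_1 - f_2)(Q) = (a_{1,p} f_1 - a_{2,p} f_2)(Q)$ throughout $\calC$. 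The main subtlety is that $D = C$ is not an admissible choice in the pointwise identity (one needs $C \cap D = 0$), so Proposition \ref{Proposition: equality of sections} naturally pairs $A$ at $Q$ with $W$ at a sibling rather than at $Q$ itself; it is precisely the $\pi$-saturation of $\calC$ that makes the above combinatorial pivot available and turns this off-diagonal identity into the diagonal equality of the proposition.
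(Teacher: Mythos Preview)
Your proof is correct and follows the same approach as the paper's: extract the pointwise identity $A(Q)=W(Q')$ for distinct $Q,Q'$ in a fiber from Proposition~\ref{Proposition: equality of sections}, then use the $\pi$-saturation of $\calC$ (Lemma~\ref{Lemma: saturated}) and a short pivot among three points of the fiber to obtain $A(Q)=W(Q)$. The paper compresses the final step into the phrase ``writing this equation for all possible distinct pairs, we readily obtain our desired equality,'' whereas you spell out the pivot and the identification $\pi_2(R)=w(\uA,D)$ explicitly; substantively the arguments are identical.
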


\begin{proof} Let $Q=(\uA,C)$ be a point in $\calC$. By Lemma \ref{Lemma: saturated}, we have $\Sib(Q) \subset \calC$.  Hence, for any distinct pair $Q_1=(\uA,C_1)$ and $Q_2=(\uA,C_2)$ of points belonging to $\pi^{-1}(\uA)$, we have $(\uA,C_1,C_2) \in \pi_1^{-1}(\calR)$. By Proposition \ref{Proposition: equality of sections}, we can write
\[
(a_{1,p}f_1-a_{2,p}f_2)(\uA,C_1)={\rm pr}^*(f_1-f_2)(\uA/C_2,\uA[p]/C_2)=w(f_1-f_2)(\uA,C_2).
\]
Writing this equation for all possible distinct pairs as above, we readily obtain our desired equality:
\[
(a_{1,p}f_1-a_{2,p}f_2)(\uA,C)=w(f_1-f_2)(\uA,C).
\]
\end{proof}

\begin{rmk}
Note that it is important for the above argument that we are working over $\calC=\tYrigtau(g-1,g-\sum_{i=1}^{g-1} 1/p^i)$ which is a $\pi$-saturated region; no region inside $\tYrigtau[0,g-1]$ has the same property. While in the above argument we haven't used the full force of this property, we will do so below (Corollary \ref{Corollary: average}). This necessitates our chosen extent of the analytic continuation of $f_1,f_2$.
\end{rmk}

\begin{cor}\label{Corollary: gluing}The section $a_{1,p}f_1-a_{2,p}f_2$ extends to $\tYrigtau$. 
\end{cor}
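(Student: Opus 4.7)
The plan is to glue the two sections, one on $\calR$ and one on $w^{-1}(\calR^\prime)$, into a single section on $\tYrigtau$, using Lemma \ref{Lemma: union} to cover $\tYrigtau$ by these two admissible opens and Proposition \ref{Proposition: Intersection} to ensure agreement on the overlap.

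First, by Proposition \ref{Proposition: Analytic continuation} applied to both $f_1$ and $f_2$ (each of which is a $U_p$-eigenform with nonzero eigenvalue), the linear combination $a_{1,p}f_1-a_{2,p}f_2$ is defined as a section of $\omega^{\underline k}$ on all of $\calR$. Next, since $\calR^\prime=\tYrigtau[0,1)$ is contained in $\calV$, Proposition \ref{Proposition: canonical analytic continuation} guarantees that $f_1$ and $f_2$, and hence $f_1-f_2$, extend to sections of $\omega^{\underline k}$ on $\calR^\prime$. Pulling back along the Atkin-Lehner involution $w$ produces the section $w(f_1-f_2)$ of $\omega^{\underline k}$ on $w^{-1}(\calR^\prime)$.

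By Lemma \ref{Lemma: intersection}, the intersection $\calR\cap w^{-1}(\calR^\prime)$ equals $\calC$, and by Proposition \ref{Proposition: Intersection} the two sections $a_{1,p}f_1-a_{2,p}f_2$ and $w(f_1-f_2)$ coincide on $\calC$. Combined with the covering $\tYrigtau=\calR\cup w^{-1}(\calR^\prime)$ of Lemma \ref{Lemma: union}, the standard gluing property for sections of the coherent sheaf $\omega^{\underline k}$ on the admissible covering $\{\calR,\,w^{-1}(\calR^\prime)\}$ of $\tYrigtau$ produces a unique section of $\omega^{\underline k}$ on $\tYrigtau$ whose restriction to $\calR$ is $a_{1,p}f_1-a_{2,p}f_2$. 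This section is therefore the desired extension.

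The only potentially delicate point is the agreement on the overlap, but that is precisely Proposition \ref{Proposition: Intersection}; once this is in hand the corollary is formal. No computation beyond invoking these previously established results is required.
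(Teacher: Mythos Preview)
Your proof is correct and follows the same approach as the paper: define $a_{1,p}f_1-a_{2,p}f_2$ on $\calR$ and $w(f_1-f_2)$ on $w^{-1}(\calR^\prime)$, use Proposition~\ref{Proposition: Intersection} and Lemma~\ref{Lemma: intersection} to verify agreement on the overlap $\calC$, and glue via Lemma~\ref{Lemma: union}. The only cosmetic difference is that you cite Proposition~\ref{Proposition: canonical analytic continuation} rather than Proposition~\ref{Proposition: Analytic continuation} to obtain $f_1-f_2$ on $\calR^\prime$, which is immaterial since $\calR^\prime\subset\calR$.
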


\begin{proof} By the arguments above, the section $a_{1,p}f_1-a_{2,p}f_2$ on $\calR$ glues to $w(f_1-f_2)$ on $w^{-1}(\calR^\prime)$. Hence, $a_{1,p}f_1-a_{2,p}f_2$ extends analytically to $\calR \cup w^{-1}(\calR^\prime)$ which equals $\tYrigtau$ by Lemma \ref{Lemma: union}.
\end{proof}

\begin{cor}\label{Corollary: average} Let $Q=(\uA,C) \in \calC$. Let $(\uA,C^\prime)\in\Sib(Q)$. We have
\[
(a_{1,p}f_1-a_{2,p}f_2) (\uA,C)=(a_{1,p}f_1-a_{2,p}f_2) (\uA,C^\prime)
\]
\end{cor}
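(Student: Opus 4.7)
The plan is to leverage the ``mismatched'' identity which already appeared inside the proof of Proposition \ref{Proposition: Intersection}: for any two distinct points $(\uA,C_1)$ and $(\uA,C_2)$ of $\pi^{-1}(\uA)$ for which the triple $(\uA,C_1,C_2)$ lies in $\pi_1^{-1}(\calR)$, Proposition \ref{Proposition: equality of sections} yields
\[
(a_{1,p}f_1-a_{2,p}f_2)(\uA,C_1) \;=\; w(f_1-f_2)(\uA,C_2).
\]
The key observation is that the left-hand side depends only on $C_1$ and the right-hand side only on $C_2$; this mechanism lets us equate values at two different siblings of $Q$ as soon as we can route both through a common auxiliary sibling.

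First, Lemma \ref{Lemma: saturated} ensures that $\calC$ is $\pi$-saturated, so $\Sib(Q)\cup\{Q\}\subset \calC\subset \calR$. In particular, any triple built from two siblings automatically lies in $\pi_1^{-1}(\calR)$ — this is exactly the observation already invoked in the proof of Proposition \ref{Proposition: Intersection}. Since $p$ is inert in $\ol$ and $g\geq 1$, the fibre $\pi^{-1}(\pi(Q))$ has cardinality $p^g+1\geq 3$, so I can pick a third sibling $(\uA,C'')\in\Sib(Q)$ distinct from both $(\uA,C)$ and $(\uA,C')$.

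Applying the displayed identity twice, once with $(C_1,C_2)=(C,C'')$ and once with $(C_1,C_2)=(C',C'')$, gives
\[
(a_{1,p}f_1-a_{2,p}f_2)(\uA,C) \;=\; w(f_1-f_2)(\uA,C'') \;=\; (a_{1,p}f_1-a_{2,p}f_2)(\uA,C'),
\]
which is exactly the statement of the corollary. The only step where one could worry is the existence of a third sibling, but this is automatic in the inert case ($p^g+1\geq 3$) and requires no additional geometry; nothing more delicate intervenes beyond what has already been set up, which is why the result is stated as a corollary rather than a standalone proposition.
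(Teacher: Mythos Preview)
Your argument is correct and is essentially identical to the paper's own proof: both pick a third sibling $(\uA,D)$ (your $C''$) distinct from $C$ and $C'$, and route through $w(f_1-f_2)(\uA,D)$ using the identity from the proof of Proposition~\ref{Proposition: Intersection} twice. Your explicit remark that $p^g+1\geq 3$ guarantees such a third sibling is a detail the paper leaves implicit (recall also that $g>1$ by standing assumption).
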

\begin{proof} Looking at the proof of Proposition \ref{Proposition: Intersection}, we find that if $(\uA,D)$ is another point in $\Sib(Q)$, we can write
\[
(a_{1,p}f_1-a_{2,p}f_2) (\uA,C)=w(f_1-f_2)(\uA,D)=(a_{1,p}f_1-a_{2,p}f_2) (\uA,C^\prime).
\]

\end{proof}

 We can now finish the proof of Theorem \ref{Theorem: Main}. We first show that $a_{1,p}f_1-a_{2,p}f_2$ extends from $\tYrigtau$ to $\tYrig$. Since the morphism
 \[
 \pi:\tYrigtau \arr \tXrigtau
 \]
 is finite and flat of rank $p^g+1$, there is a trace map
 \[
 \pi_\ast: H^0(\tYrigtau,\omega^{\underline{k}})=H^0(\tYrigtau,\pi^*\omega^{\underline{k}}) \arr H^0(\tXrigtau,\omega^{\underline{k}}),
 \]
satisfying $\pi_*\pi^*(s)=(p^g+1)s$, for all $s \in  H^0(\tXrigtau,\omega^{\underline{k}})$. We define 
 \[
 h=\pi_*(a_{1,p}f_1-a_{2,p}f_2),
 \]
 a section of $\omega^{\underline{k}}$ on $\tXrigtau$. Note that, by definition (\S \ref{Section: tau}), we have 
 \[
 \tXrigtau=\spe^{-1}({\tilde{\Xbar}}-W_1),
 \]
 and that $\codim(W_1)>1$ by results of  \cite{GorenOort}. Therefore, the rigid analytic Koecher principle implies that $h$ extends to a section of $\omega^{\underline{k}}$ on $\tXrig$, which we will still call $h$. We show that $(1/(p^g+1))\pi^*(h)$ provides an analytic continuation of $a_{1,p}f_1-a_{2,p,}f_2$ to $\tYrig$. Since all local rings in $\tYrig$ are integral and every connected component of  $\tYrig$ intersects $\calC$, it is enough to show that $(p^g+1)(a_{1,p}f_1-a_{2,p}f_2)$ agrees with $\pi^*(h)$ on $\calC$. Over $\calC$, we can write
 \[
 \pi^*(h)(\uA,C)=h(\uA)=\sum_D (a_{1,p}f_1-a_{2,p}f_2)(\uA,D)=(p^g+1)(a_{1,p}f_1-a_{2,p}f_2)(\uA,C),
 \]
where $(\uA,D)$ runs over $\pi^{-1} (\uA)$, and the last equality follows from Corollary \ref{Corollary: average}. Therefore, $a_{1,p}f_1-a_{2,p}f_2$ is classical. Proposition \ref{Proposition: Intersection} now implies that $w(f_1)-w(f_2)$ is also classical. Also, applying $w$, it follows that  $a_{1,p}w(f_1)-a_{2,p}w(f_2)$ is classical. Since $a_{1,p}\neq a_{2,p}$, we deduce that both $f_1$ and $f_2$ are classical. It also follows that $f_1$ and $f_2$ are $p$-stabilizations of $a_{1,p}f_1-a_{2,p}f_2$ which we have shown is  classical of level $\Gamma_{00}(N)$. This finishes the proof of Theorem \ref{Theorem: Main}, and, hence, of Theorem \ref{Theorem: modularity}.

\section{The general unramified case}\label{Section: Appendix}

In this section, we indicate how our proofs in the inert case can be extended to the general case, where $p$ is unramified in the totally real field. The notation is as in \S \ref{section: stratifications}. Let $\SS=\{\gerp|p\}$, the set of all prime ideals of $\ol$ dividing $p$. We denote the complement of $S\subset \SS$ by $S^c$.

\begin{dfn} Let $Q\in \Yrig$. For any $\gerp \in \SS$, we define $\nu_\gerp(Q)=\sum_{\beta\in\BB_\gerp} \nu_\beta(Q)$. If $\underline{I}$ is a multiset of intervals indexed by $\SS$, $\underline{I}=\{I_\gerp\subset [0,f_\gerp]: \gerp\in\SS\}$, and $\calV \subset \Yrig$, we define an admissible open of $\Yrig$
\[
\calV{\underline I}=\{Q \in \calV: \nu_\gerp(Q) \in I_\gerp, \forall \gerp \in \SS\}.
\]
We also define
\[
\tYrig \underline{I}=\Yrig \underline{I} \cup \bigcup_{\gert} \spe^{-1}(\tilde{W}_{\BB_\gert,\BB_{\gert^\ast}}),
\]
where $\gert$ runs over all ideals $\gert|p$, such that for all prime ideals $\gerp|\gert^\ast$, we have $f_{\gerp}\in I_\gerp$, and for all $\gerp\!\!\not| \gert^\ast$, we have $0\in I_\gerp$. See Definition  \ref{Definition: horizontal strata} and \S \ref{Section: overconvergent} for the notation. 

\end{dfn}


\begin{dfn} Let $\gerp\in \SS$. We define $\II^\prime_\gerp$ to  be the interval $[0,1)$ if $f_\gerp>1$, and $[0,p/(p+1))$ if $f_\gerp=1$. We define $\II_\gerp$ to be the interval $[0,f_\gerp-\sum_{i=1}^{f_\gerp-1} 1/p^i)$ if $f_\gerp>1$, and $[0,1)$ if $f_\gerp=1$.  We also define $\II^\ast_\gerp$ to be the interval $(f_\gerp-1,f_\gerp-\sum_{i=1}^{f_\gerp-1} 1/p^i)$ if $f_\gerp>1$, and $(0,1)$ if $f_\gerp=1$. We define $\II$ (respectively, $\II^\prime$, $\II^\ast$) to be the multiset of intervals (as defined above), whose component at $\gerp$ is $\II_\gerp$ (respectively, $\II^\prime_\gerp$, $\II^\ast_\gerp$).
\end{dfn}

Using an identical argument as in Lemma \ref{Lemma: type one valuations}, we can prove
\begin{lem}\label{Lemma: type one valuations general case}
Let $Q \in \tYrigtau$. Let $\gerp\in \SS$ be such that $\tau(\pi(\Qbar)) \cap \BB_\gerp=\{\beta_0(\gerp)\}$. There is an integer  $0\leq m(\gerp) \leq f_\gerp-1$, and a rational number $0 \leq x(\gerp) <1$, such that $(\nu_\beta(Q): \beta\in\BB_\gerp)$ is equal to $v(\beta_0(\gerp),m(\gerp),f_\gerp,x(\gerp))$ (see Definition \ref{Definition: type one valuations}). Furthermore, for any $\gerp \in \SS$ such that $\tau(\pi(\Qbar)) \cap \BB_\gerp=\emptyset$, we have, either $\nu_\beta(Q)=0$ for all $\beta \in \BB_\gerp$ or $\nu_\beta(Q)=1$ for all $\beta \in \BB_\gerp$.
\end{lem}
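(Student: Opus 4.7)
The plan is to apply the argument from the inert case (Lemma \ref{Lemma: type one valuations}) one prime at a time, exploiting the fact that $\sigma$ preserves each $\BB_\gerp$, so the combinatorial reasoning used there fits entirely inside a single $\BB_\gerp$. The only new wrinkle is the second assertion, which concerns primes $\gerp$ where $\tau(\pi(\Qbar)) \cap \BB_\gerp = \emptyset$; there the same implication ``$\nu_\beta(Q) \neq 1$ forces $\nu_{\sigma \circ \beta}(Q) = 0$'' wraps all the way around the $\sigma$-cycle of $\BB_\gerp$.

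First, I would fix $\gerp \in \SS$ with $\tau(\pi(\Qbar)) \cap \BB_\gerp = \{\beta_0(\gerp)\}$ and prove the auxiliary claim: if $0 \leq r \leq f_\gerp - 1$ and $\nu_{\sigma^r \circ \beta_0(\gerp)}(Q) \neq 1$, then for every $r < s \leq f_\gerp - 1$ one has $\nu_{\sigma^s \circ \beta_0(\gerp)}(Q) = 0$. The induction from Lemma \ref{Lemma: type one valuations} applies verbatim: the hypothesis forces $\sigma^{r+1} \circ \beta_0(\gerp) \in \varphi(\Qbar)$ via part (1) of Theorem \ref{theroem: theorem of cube} together with the description of $\nu_\beta$; since $\sigma^{r+1} \circ \beta_0(\gerp) \in \BB_\gerp$ but differs from $\beta_0(\gerp)$, it lies outside $\tau(\pi(\Qbar))$, so Lemma \ref{lemma: type and phi eta} forces it out of $\eta(\Qbar)$, giving $\nu_{\sigma^{r+1} \circ \beta_0(\gerp)}(Q) = 0$; then one iterates up to index $f_\gerp-1$. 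To extract a smallest such $r$, which will serve as $m(\gerp)$, I must rule out the possibility that $\nu_\beta(Q) = 1$ for every $\beta \in \BB_\gerp$: in that case the description of $\nu_\beta$ would give $\varphi(\Qbar) \cap \BB_\gerp = \emptyset$, admissibility $\ell(\varphi^c) \subseteq \eta$ would then yield $\BB_\gerp \subseteq \eta(\Qbar)$, and Lemma \ref{lemma: type and phi eta} would force $\tau(\pi(\Qbar)) \cap \BB_\gerp = \emptyset$, contradicting our hypothesis. Setting $x(\gerp) = \nu_{\sigma^{m(\gerp)} \circ \beta_0(\gerp)}(Q) \in [0,1)$, the valuation vector restricted to $\BB_\gerp$ is then exactly $v(\beta_0(\gerp), m(\gerp), f_\gerp, x(\gerp))$ by construction.

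For the final assertion, suppose $\tau(\pi(\Qbar)) \cap \BB_\gerp = \emptyset$ and that there exists some $\beta \in \BB_\gerp$ with $\nu_\beta(Q) \neq 1$. The same single inductive step yields $\nu_{\sigma \circ \beta}(Q) = 0$ (the point $\sigma \circ \beta$ lies in $\BB_\gerp \setminus \tau(\pi(\Qbar))$, so the argument above applies), and, since $0 \neq 1$, the implication propagates around the entire $\sigma$-orbit of length $f_\gerp$, eventually returning to $\beta$ itself and forcing $\nu_\beta(Q) = 0$. Hence either all values on $\BB_\gerp$ are equal to $1$ or they are all equal to $0$. I do not anticipate any serious obstacle in this argument; the only point requiring care is that every intermediate index $\sigma^j \circ \beta_0(\gerp)$ stays inside $\BB_\gerp$ and (in the first part) differs from $\beta_0(\gerp)$ before wrapping around, both of which are immediate from the cyclic action of $\sigma$ on $\BB_\gerp$ with period $f_\gerp$.
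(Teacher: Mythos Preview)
Your proposal is correct and follows exactly the approach the paper intends: the paper's own proof consists of the single sentence ``Using an identical argument as in Lemma~\ref{Lemma: type one valuations}, we can prove'', and your write-up supplies precisely that argument, localized to a single $\BB_\gerp$, together with the easy extra step (wrapping the implication around the $\sigma$-cycle) needed for the second assertion. Your handling of the case ``all $\nu_\beta(Q)=1$ on $\BB_\gerp$'' via admissibility and Lemma~\ref{lemma: type and phi eta} is the correct localization of the paper's appeal to non-ordinary reduction in the inert case.
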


The following lemma is an analogue of Lemma \ref{Lemma: automatic type one}, and can be proved in exactly the same way.
\begin{lem}\label{Lemma: automatic type one, split} Let $Q=(\uA,C) \in \tYrig$. Assume $\gerp\in \SS$ is a prime ideal such that the valuation vector $(\nu_\beta(Q): \beta\in\BB_\gerp)$ equals $v(\beta_0(\gerp),m(\gerp),f_\gerp,x(\gerp))$ for a rational number $0 \leq x(\gerp) <1$ (see Definition \ref{Definition: type one valuations}). If $\nu_\gerp(Q) \not\in\ZZ$, then, $\tau(\pi(\Qbar)) \cap \BB_\gerp=\{\beta_0(\gerp)\}$.
\end{lem}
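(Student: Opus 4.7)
The plan is to mimic the proof of Lemma \ref{Lemma: automatic type one}, but to localize the whole argument to the $\gerp$-component. The hypothesis $\nu_\gerp(Q)\notin\ZZ$ forces $x(\gerp)\in(0,1)$ strictly, so among the entries $(\nu_\beta(Q))_{\beta\in\BB_\gerp}$ of the valuation vector exactly one — the entry at $\sigma^{m(\gerp)}\circ\beta_0(\gerp)$ — lies in the open interval $(0,1)$, while the preceding entries are $1$ and the subsequent ones are $0$. Feeding this into part (1) of Theorem \ref{theroem: theorem of cube} (the theorem of the cube), I can read off the $\BB_\gerp$-parts of the invariants $(\varphi(\Qbar),\eta(\Qbar),I(\Qbar))$ from the coordinate vector $\mathbf{a}$ with $a_{\sigma^j\circ\beta_0(\gerp)}=1,\ast,0$ according to whether $j<m(\gerp)$, $j=m(\gerp)$, or $j>m(\gerp)$.

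Carrying out this read-off gives $\eta(\Qbar)\cap\BB_\gerp=\{\sigma^j\circ\beta_0(\gerp):0\leq j\leq m(\gerp)\}$, $I(\Qbar)\cap\BB_\gerp=\{\sigma^{m(\gerp)}\circ\beta_0(\gerp)\}$, and, using the recipe $\varphi(\mathbf{a})=\{\beta:a_{\sigma^{-1}\circ\beta}\neq 1\}$, the set $\varphi(\Qbar)\cap\BB_\gerp$ consists of $\beta_0(\gerp)$ together with $\{\sigma^j\circ\beta_0(\gerp):m(\gerp)+1\leq j\leq f_\gerp-1\}$ (the latter collapsing to empty when $m(\gerp)=f_\gerp-1$, in which case $\eta(\Qbar)\cap\BB_\gerp=\BB_\gerp$). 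In either case one checks directly that
\[
\varphi(\Qbar)\cap\eta(\Qbar)\cap\BB_\gerp=\{\beta_0(\gerp)\},\qquad \varphi(\Qbar)^c\cap\eta(\Qbar)^c\cap\BB_\gerp=\emptyset.
\]

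To conclude, I intersect Lemma \ref{lemma: type and phi eta} with $\BB_\gerp$ to obtain
\[
\{\beta_0(\gerp)\}=\varphi(\Qbar)\cap\eta(\Qbar)\cap\BB_\gerp\subseteq \tau(\Pbar)\cap\BB_\gerp\subseteq \bigl((\varphi(\Qbar)\cap\eta(\Qbar))\cup(\varphi(\Qbar)^c\cap\eta(\Qbar)^c)\bigr)\cap\BB_\gerp=\{\beta_0(\gerp)\},
\]
forcing $\tau(\pi(\Qbar))\cap\BB_\gerp=\{\beta_0(\gerp)\}$. There is no real obstacle here beyond bookkeeping: the only subtlety is correctly handling the edge case $m(\gerp)=f_\gerp-1$ where the set of ``zero'' entries in $\BB_\gerp$ is empty and the $\sigma$-shift wraps around, but in that range the intersections above still evaluate to $\{\beta_0(\gerp)\}$ and $\emptyset$ respectively, so the argument goes through uniformly.
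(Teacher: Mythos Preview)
Your proposal is correct and follows exactly the approach the paper indicates: the paper gives no separate proof, stating only that the lemma ``can be proved in exactly the same way'' as Lemma~\ref{Lemma: automatic type one}, and your argument is precisely that proof localized to the $\BB_\gerp$-component. The explicit bookkeeping you supply (reading off $\varphi(\Qbar)\cap\BB_\gerp$ and $\eta(\Qbar)\cap\BB_\gerp$ from the cube and then intersecting the inclusions of Lemma~\ref{lemma: type and phi eta} with $\BB_\gerp$) is exactly what is meant.
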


Proposition \ref{Proposition: canonical analytic continuation} implies the following.

\begin{prop}\label{Proposition: canonical analytic continuation split} Any $f \in \calM^\dagger_{\underline{k}}$ which is a $U_\gerp$-eigenform with a nonzero eigenvalue for all $\gerp|p$, can be extended to $\tYrigtau \II^\prime$.
\end{prop}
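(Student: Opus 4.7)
The plan is to run the iterative argument of Proposition \ref{Proposition: canonical analytic continuation} one prime $\gerp \in \SS$ at a time, exploiting the decomposition $U_p = \prod_{\gerp \in \SS} U_\gerp$ and the fact that each $U_\gerp$-eigenvalue is nonzero. Because the $U_\gerp$ commute and each operator, built from the correspondence $(\pi_{1,\gerp}, \pi_{2,\gerp})$, only affects the $\BB_\gerp$-component of the valuation vector (by Lemma \ref{lemma: valuations under pi} together with Proposition \ref{Proposition: valuations under w}), one may fix a single $\gerp$, extend $f$ in that direction alone, and then loop over $\SS$.

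First I would, for fixed $\gerp$ and a rational $s$ in the appropriate range, mimic the proof of Proposition \ref{Proposition: canonical analytic continuation} verbatim with $U_p$ replaced by $U_\gerp$: set
\[
\calV_\gerp(s) = \{Q\in \tYrigtau : \nu_\beta(Q) + p\,\nu_{\sigma^{-1}\circ\beta}(Q) \leq s,\ \forall \beta\in\BB_\gerp\},
\]
verify the correspondence inclusion $\pi_{1,\gerp}^{-1}(\calV_\gerp(p^{-j+1}s)) \subset \pi_{2,\gerp}^{-1}(\calV_\gerp(p^{-j}s))$ by a direct application of Lemma \ref{lemma: valuations under pi} and Proposition \ref{Proposition: valuations under w}, and then invoke Proposition 3.1 of \cite{Kassaeiclassicality} along the chain $\calV_\gerp(p^{-M}s) \subset \cdots \subset \calV_\gerp(s)$ to push $f$ outward. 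This is legal at every step precisely because $U_\gerp f = a_\gerp f$ with $a_\gerp \neq 0$.

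Next I would translate the condition cutting out $\calV_\gerp$ into a condition on $\nu_\gerp$, using the hypothesis that we are in $\tYrigtau$. By Lemma \ref{Lemma: type one valuations general case}, the $\BB_\gerp$-profile of any $Q \in \tYrigtau$ is either constant ($0$ or $1$) or of the shape $v(\beta_0(\gerp), m, f_\gerp, x)$. A direct case check on these profiles shows that the strict inequality $\nu_\beta + p\nu_{\sigma^{-1}\circ\beta} < p$ is equivalent to $\nu_\gerp(Q) < 1$ when $f_\gerp > 1$ (i.e.\ $\II^\prime_\gerp = [0,1)$), and to $\nu_\gerp(Q) < p/(p+1)$ when $f_\gerp = 1$ (i.e.\ $\II^\prime_\gerp = [0,p/(p+1))$). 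Doing this for every $\gerp \in \SS$ in succession yields the extension to the intersection, which is exactly $\tYrigtau \II^\prime$; the horizontal strata at the boundary are built into the definition of $\tYrig \underline{I}$ and require no extra analysis.

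The main obstacle is the split case $f_\gerp = 1$, where the iteration cannot pass $p/(p+1)$: there the correspondence inclusion breaks, mirroring the classical elliptic-curve situation, so the case division $f_\gerp=1$ versus $f_\gerp>1$ in the profile computation must be carried out carefully. It is precisely the restriction to $\tYrigtau$ that keeps the $f_\gerp > 1$ bound clean at $1$; without that restriction, mixed-type profiles would force a worse bound.
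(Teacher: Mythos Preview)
Your prime-by-prime strategy can be made to work, but the paper's argument is a one-liner and avoids the bookkeeping you have set up. Since $U_p=\prod_{\gerp\in\SS}U_\gerp$ and each $U_\gerp$-eigenvalue $a_\gerp$ is nonzero, $f$ is automatically a $U_p$-eigenform with nonzero eigenvalue $\prod_\gerp a_\gerp$; Proposition~\ref{Proposition: canonical analytic continuation} then extends $f$ to all of $\calV\subset\tYrig$ in one stroke. The translation you carry out in your second paragraph (via Lemma~\ref{Lemma: type one valuations general case}) is exactly what identifies $\calV\cap\tYrigtau$ with $\tYrigtau\,\II^\prime$, and that is the whole proof.

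Your route, by contrast, needs two repairs before it goes through. First, the sets $\calV_\gerp(s)$ should be cut out of $\tYrig$, not $\tYrigtau$: the overconvergent region $\tYrig[0,r]$ where $f$ starts is not contained in $\tYrigtau$, and the $U_\gerp$-correspondence is not a priori known to preserve the $|\tau|\leq 1$ locus, so the inclusion $\pi_{1,\gerp}^{-1}(\calV_\gerp(\cdot))\subset\pi_{2,\gerp}^{-1}(\calV_\gerp(\cdot))$ is only clean on the full $\tYrig$. The restriction to $\tYrigtau$ should come at the very end, as in the paper. Second, when you iterate with a single $U_\gerp$, your $\calV_\gerp(s)$ places no constraint on $\nu_{\gerp'}$ for $\gerp'\neq\gerp$, so $\calV_\gerp(p^{-M}s)$ is \emph{not} contained in $\tYrig[0,r]$ and the chain does not start; you must intersect at each stage with the domain already obtained at the other primes. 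Both issues are fixable, but they illustrate why the paper simply applies Proposition~\ref{Proposition: canonical analytic continuation} to $U_p$ rather than decomposing.
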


Using the same method as Proposition \ref{Proposition: Analytic continuation}, we can prove

\begin{prop}\label{Proposition: analytic continuaion split} Let $\gerq\in \SS$. Let $f$ be an overconvergent Hilbert modular form of weight $\underline{\kappa}$, which is a $U_\gerq$ eigenform with a nonzero eigenvalue $a_\gerq$. Let ${\underline I}^\prime$ be a multiset of intervals such that  $I^\prime_{\gerq}=\II^\prime_{\gerq}$. Assume $f$ is defined on $\tYrigtau {\underline I}^\prime$. Then, $f$ can be analytically extended to $\tYrigtau {\underline I}$, where $\underline{I}$ is such that $I_\gerp=I^\prime_\gerp$ for $\gerp\neq\gerq$, and ${I}_{\gerq}=\II_{\gerq}$.
\end{prop}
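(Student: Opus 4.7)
My plan is to adapt the proof of Proposition \ref{Proposition: Analytic continuation} by replacing the combined $U_p$-correspondence with the $U_\gerq$-correspondence. The underlying observation is that $\pi_{1,\gerq}$ and $\pi_{2,\gerq}$ only touch the $\gerq$-component of a point in $\tYrigtau$, so the constraints $I_\gerp = I'_\gerp$ for $\gerp \neq \gerq$ are automatically inherited, and the whole analysis reduces to a purely $\gerq$-local problem that parallels the inert case.

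Concretely, by Proposition 3.1 of \cite{Kassaeiclassicality} applied iteratively along a finite chain of intermediate multi-intervals obtained by varying only the $\gerq$-coordinate, it suffices to verify the one-step inclusion
$$\pi_{1,\gerq}^{-1}\bigl(\tYrigtau \underline{I}\bigr) \;\subset\; \pi_{2,\gerq}^{-1}\bigl(\tYrigtau \underline{I}'\bigr).$$
Take $Q=(\uA,C) \in \tYrigtau \underline{I}$ of non-ordinary reduction and any $Q'\in\Sib_\gerq(Q)$. For $\gerp\neq\gerq$ and $\beta\in\BB_\gerp$, Proposition \ref{Proposition: valuations under w} shows that $w_\gerq$ preserves $\nu_\beta$, and since $Q'$ agrees with $Q$ on the $\gerp$-part one gets $\nu_\beta(w_\gerq(Q'))=\nu_\beta(Q)\in I_\gerp=I'_\gerp$ for free. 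So the only substantive content is verifying the $\gerq$-condition, together with the $|\tau|\le 1$ condition at $\gerq$.

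When $f_\gerq>1$ this is a verbatim transcription of the inert argument. Writing $\tau(\pi(\overline{Q}))\cap\BB_\gerq=\{\beta_0(\gerq)\}$ and supposing for contradiction that $\nu_\gerq(Q')\leq f_\gerq-1$, Lemma \ref{Lemma: type one valuations general case} applied to $Q'$ forces $\nu_{\sigma^{f_\gerq-1}\circ\beta_0(\gerq)}(Q')=0$; Corollary \ref{Corollary: inequalities} at $\beta=\sigma^{f_\gerq-1}\circ\beta_0(\gerq)$, combined with the universal bound $\nu_\beta(Q)\le 1$ on the left, gives
$$\nu_{\sigma^{f_\gerq-1}\circ\beta_0(\gerq)}(Q) \;\geq\; 1-\sum_{i=1}^{f_\gerq-1}\frac{1}{p^i},$$
and Lemma \ref{Lemma: type one valuations general case} then yields $\nu_\gerq(Q)\geq f_\gerq-\sum_{i=1}^{f_\gerq-1} 1/p^i$, contradicting $\nu_\gerq(Q)\in\II_\gerq$. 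Proposition \ref{Proposition: valuations under w} then delivers $\nu_\gerq(w_\gerq(Q'))<1\in\II'_\gerq$, while Lemma \ref{Lemma: automatic type one, split} ensures $w_\gerq(Q')\in\tYrigtau$. When $f_\gerq=1$, Corollary \ref{Corollary: inequalities} degenerates to $\nu_\gerq(Q)+\nu_\gerq(Q')\geq 1$, which is exactly the classical canonical-subgroup situation on a modular curve; one passes from $[0,p/(p+1))$ up to $[0,1)$ at the $\gerq$-slot by a Buzzard--Coleman style iteration of Proposition 3.1 of \cite{Kassaeiclassicality}, using the distinction between the canonical sibling (with $m_\beta=0$ in Lemma \ref{Lemma: valuation calculation}) and the non-canonical ones to chain across the intermediate annuli.

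The principal obstacle is organizational rather than conceptual. One must check throughout the iteration that the $U_\gerq$-correspondence genuinely fixes the $\gerp$-coordinates for every $\gerp\neq\gerq$, and that Lemma \ref{Lemma: automatic type one, split} continues to place each intermediate region inside $\tYrigtau$. Once this separation of variables is done cleanly, the $\gerq$-local estimate is a direct copy of the Kisin-module computation already used in the inert case.
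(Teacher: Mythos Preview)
Your approach is correct and matches what the paper intends by ``using the same method as Proposition \ref{Proposition: Analytic continuation}'': isolate the $\gerq$-component using that $w_\gerq$ and $\Sib_\gerq$ fix $\nu_\beta$ for $\beta\notin\BB_\gerq$, then rerun the inert-case estimate at $\gerq$. Two small corrections are worth recording. First, for $f_\gerq=1$ the displayed single inclusion $\pi_{1,\gerq}^{-1}(\tYrigtau\underline{I})\subset\pi_{2,\gerq}^{-1}(\tYrigtau\underline{I}')$ does \emph{not} hold as written: when $\nu_\gerq(Q)=v$ is close to $1$, one sibling (the canonical subgroup of $\uA$, which has Hasse valuation $p(1-v)<p/(p+1)$) gives $\nu_\gerq(w_\gerq(Q'))=1-p(1-v)$, which is also close to $1$. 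So you really do need the finite chain you mention, with intermediate endpoints $1-p^{-n}/(p+1)$; the inclusion then follows from Lemma \ref{lemma: valuations under pi} applied to $w_\gerq(Q)\in\calV_\gerq$, which gives $1-\nu_\gerq(w_\gerq(Q'))\ge p\,(1-\nu_\gerq(Q))$ for every sibling. Second, the canonical sibling does not have $m_\beta=0$: Lemma \ref{Lemma: valuation calculation} gives $m_\beta=e-r_\beta$ for it, whereas $m_\beta=0$ would force $\nu_\gerq(Q)+\nu_\gerq(Q')=1$, which fails here. This misidentification is harmless for the argument, since the iteration only needs the inequality above, not the exact value of $m_\beta$.
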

 Proposition \ref{Proposition: canonical analytic continuation split} combined with Proposition \ref{Proposition: analytic continuaion split} imply the following Corollary.
 
\begin{cor} Let $f$ be an overconvergent Hilbert modular form of weight $\underline{\kappa}$, which is a $U_\gerp$ eigenform with a nonzero eigenvalue $a_\gerp$, for all $\gerp \in \SS$. Then, $f$ extends to $\tYrigtau {\II}$.
\end{cor}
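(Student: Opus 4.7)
The plan is to argue by a short induction on the primes of $\SS$, starting from the baseline region provided by Proposition~\ref{Proposition: canonical analytic continuation split} and enlarging one component of the multiset of intervals at a time using Proposition~\ref{Proposition: analytic continuaion split}. Since $f$ is a $U_\gerp$-eigenform with nonzero eigenvalue for every $\gerp \in \SS$, Proposition~\ref{Proposition: canonical analytic continuation split} first supplies an extension of $f$ to $\tYrigtau \II'$, which furnishes the base case.

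Next I would enumerate $\SS = \{\gerp_1, \ldots, \gerp_n\}$ and, for each $0 \le j \le n$, define a multiset of intervals $\underline{I}^{(j)}$ by $I^{(j)}_{\gerp_i} = \II_{\gerp_i}$ when $i \le j$ and $I^{(j)}_{\gerp_i} = \II'_{\gerp_i}$ when $i > j$. The goal is to show inductively that $f$ extends to $\tYrigtau \underline{I}^{(j)}$ for every $j$. For the step $j-1 \to j$, I will apply Proposition~\ref{Proposition: analytic continuaion split} with $\gerq = \gerp_j$ and with $\underline{I}^{(j-1)}$ playing the role of $\underline{I}'$ in that proposition; the required hypothesis $I^{(j-1)}_{\gerp_j} = \II'_{\gerp_j}$ holds because $\gerp_j$ has not been modified in any of the preceding $j-1$ steps. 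The conclusion then enlarges the interval at $\gerp_j$ from $\II'_{\gerp_j}$ to $\II_{\gerp_j}$ while leaving all other components of the multiset unchanged, producing the extension to $\tYrigtau \underline{I}^{(j)}$. Setting $j = n$ yields $\underline{I}^{(n)} = \II$ and completes the argument.

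There is no substantial obstacle here: the statement is a formal consequence of the two preceding propositions. The one point worth keeping in mind is that Proposition~\ref{Proposition: analytic continuaion split} operates on one prime at a time and preserves the intervals at all other primes, so that the components already promoted from $\II'_{\gerp_i}$ to $\II_{\gerp_i}$ in earlier induction steps propagate along untouched. This is precisely what ensures that at each stage the hypothesis of Proposition~\ref{Proposition: analytic continuaion split} remains satisfied at the single prime $\gerp_j$ currently being processed, making the iteration go through without having to revisit any previously-treated prime.
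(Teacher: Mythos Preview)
Your proposal is correct and follows exactly the approach the paper takes: the paper simply states that the corollary follows by combining Proposition~\ref{Proposition: canonical analytic continuation split} with Proposition~\ref{Proposition: analytic continuaion split}, and your induction over the primes of $\SS$ is precisely the way to make that combination explicit.
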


Exactly, as in Lemma \ref{Lemma: saturated}, we can prove

\begin{lem}\label{Lemma: saturated at one prime}Let $\underline{I}$ be such that for some $\gerq \in \SS$, we have $I_\gerq=\II^\ast_\gerq$. Let $Q \in \tYrigtau \underline{I}$, then
\[
\Sib_\gerq(Q)\subset \tYrigtau \underline{I}.
\]
\end{lem}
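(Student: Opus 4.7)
The plan is to mirror the proof of Lemma \ref{Lemma: saturated}, localizing the Breuil-Kisin analysis to the $\BB_\gerq$-components and replacing $g$ everywhere by $f_\gerq$. The key observation enabling the localization is that a sibling $Q'\in\Sib_\gerq(Q)$ differs from $Q$ only at the $\gerq$-component of the chosen subgroup. This matches the $\BB$-graded pieces of $\gerM(A[p]/H')$ and $\gerM(A[p]/H)$ at every $\beta\not\in\BB_\gerq$, so $\nu_\beta(Q')=\nu_\beta(Q)$ for such $\beta$, hence $\nu_\gerp(Q')=\nu_\gerp(Q)\in I_\gerp$ for $\gerp\neq\gerq$. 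Moreover $\pi(Q')=\pi(Q)\in\tXrigtau$ gives $Q'\in\tYrigtau$. Thus the only thing left to verify is $\nu_\gerq(Q')\in\II^\ast_\gerq$.

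To pin down the valuation vector on $\BB_\gerq$, I would observe that $\nu_\gerq(Q)\in\II^\ast_\gerq$ excludes the boundary values $0$ and $f_\gerq$, so Lemma \ref{Lemma: type one valuations general case} forces $\tau(\pi(\Qbar))\cap\BB_\gerq=\{\beta_0\}$ for a unique $\beta_0\in\BB_\gerq$, with $(\nu_\beta(Q):\beta\in\BB_\gerq)=v(\beta_0,m,f_\gerq,x)$. The range condition on $m+x$ then forces $m=f_\gerq-1$ and $x\in(0,1-\sum_{i=1}^{f_\gerq-1}1/p^i)$ (with the obvious simplification when $f_\gerq=1$). Writing the corresponding data for $Q'$ as $v(\beta_0,m',f_\gerq,x')$, I would next show $m'=f_\gerq-1$ and $x'>0$ by feeding the bound on $x$ into Corollary \ref{Corollary: inequalities} at $\beta=\sigma^{f_\gerq-1}\circ\beta_0$, exactly as in the proof of Proposition \ref{Proposition: Analytic continuation}.

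The remainder is the verbatim analogue of the Kisin-module computation in Lemma \ref{Lemma: saturated}: apply Proposition \ref{Proposition: Kisin Modules} to the $\BB_\gerq$-components, invoke Lemma \ref{Lemma: Diedonne module via Kisin module} (applicable since $0<\nu_{\sigma^{f_\gerq-1}\circ\beta_0}(Q)<1$) to conclude that the relevant $h$-coefficient is a unit, read off the $u$-adic valuation of $\mu_{\sigma^{f_\gerq-1}\circ\beta_0}$ as $r/p$ from the resulting Newton-polygon comparison (with $r=ex$), and substitute into Lemma \ref{Lemma: valuation calculation} to get an explicit formula for $s=ex'$ that is visibly bounded above by $e(1-\sum_{i=1}^{f_\gerq-1}1/p^i)$. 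This puts $\nu_\gerq(Q')=(f_\gerq-1)+x'$ in $\II^\ast_\gerq$, as required.

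The only genuinely new point beyond the inert case is the step $m'=f_\gerq-1$, which encodes the $\gerq$-correspondence analogue of an analytic-continuation inclusion; everything else is embedding-by-embedding and transports mechanically from Lemma \ref{Lemma: saturated}. The case $f_\gerq=1$ needs no separate treatment: the sum $\sum_{i=1}^{f_\gerq-1}1/p^i$ is empty, $\II^\ast_\gerq=(0,1)$, and the same Kisin argument, now with a self-referential Frobenius at the single embedding $\beta_0$, returns $s\in(0,e)$ directly.
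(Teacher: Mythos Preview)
Your proposal is correct and follows precisely the approach the paper intends: the paper's own proof is simply the one-line remark ``Exactly, as in Lemma \ref{Lemma: saturated}, we can prove,'' and what you have written is a faithful unpacking of that reference, localizing the Breuil--Kisin computation to $\BB_\gerq$ and observing that the $\gerp$-components for $\gerp\neq\gerq$ are untouched. Your handling of the case split $s\leq r$ versus $s>r$ (the former implicit in ``verbatim analogue'') and of the degenerate case $f_\gerq=1$ is also in line with the paper.
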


The following corollary follows immediately.
\begin{cor}\label{Corollary: Saturated general} The region $\calC^\ast:=\tYrigtau \II^\ast$ is saturated with respect to the morphism $\pi:\tYrig \arr \tXrig$.
\end{cor}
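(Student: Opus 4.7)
The plan is to reduce the saturation statement, which concerns the full fiber of $\pi$, to the one-prime-at-a-time statement already proved in Lemma \ref{Lemma: saturated at one prime}, via a short induction on the number of primes of $\SS$ at which the subgroup structure is altered.

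First I would unpack what saturation means here. Since $\tYrigtau = \pi^{-1}(\tXrigtau)$, the condition of lying in $\tYrigtau$ depends only on $\pi(Q)$, so it is automatically preserved under passing to other points of the fiber $\pi^{-1}(\pi(Q))$. Consequently, the only non-trivial content is that the valuation condition $\nu_\gerp(Q) \in \II^\ast_\gerp$ for every $\gerp \in \SS$ is preserved when we replace $Q = (\uA, H)$ by any $Q' = (\uA, H') \in \pi^{-1}(\pi(Q))$.

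Next, using the decomposition $H = \oplus_{\gerp|p} H_\gerp$ induced by $\ol \otimes \ZZ_p \cong \prod_{\gerp|p} \calO_{L,\gerp}$, I would write the set of primes at which $H'$ and $H$ differ as $T = \{\gerq_1, \ldots, \gerq_k\} \subseteq \SS$, and then build a finite chain of points $Q = Q_0, Q_1, \ldots, Q_k = Q'$ in $\pi^{-1}(\pi(Q))$, where $Q_j$ is obtained from $Q_{j-1}$ by replacing the $\gerq_j$-component of the subgroup by $H'_{\gerq_j}$; that is, $Q_j \in \Sib_{\gerq_j}(Q_{j-1})$. By induction, assuming $Q_{j-1} \in \calC^\ast = \tYrigtau \II^\ast$, Lemma \ref{Lemma: saturated at one prime} applied with $\underline{I} = \II^\ast$ and the distinguished prime $\gerq = \gerq_j$ gives $\Sib_{\gerq_j}(Q_{j-1}) \subset \tYrigtau \II^\ast$, hence $Q_j \in \calC^\ast$. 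After $k$ steps we obtain $Q' \in \calC^\ast$, which is exactly the saturation property.

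There is essentially no obstacle: all the real work was done in Lemma \ref{Lemma: saturated at one prime}, which is the genuine arithmetic input (via the Breuil--Kisin module calculations behind Corollary \ref{Corollary: inequalities} and Lemma \ref{Lemma: Diedonne module via Kisin module}). The only point requiring a moment's care is to observe that the intermediate points $Q_j$ really do lie in the fiber of $\pi$ over $\pi(Q)$ (their underlying abelian variety is still $\uA$), so that the chain argument makes sense and stays in $\pi^{-1}(\pi(Q))$ throughout; this is immediate from the definition of $\Sib_{\gerq_j}$, which only alters the $\gerq_j$-component of the level structure. This completes the proof.
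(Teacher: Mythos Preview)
Your argument is correct and is precisely the intended unpacking of the paper's ``follows immediately'': since $\II^\ast$ has $\II^\ast_\gerq$ at every $\gerq \in \SS$, Lemma~\ref{Lemma: saturated at one prime} applies at each prime, and your chain through $\Sib_{\gerq_j}$ is the natural way to pass from $Q$ to an arbitrary point of $\pi^{-1}(\pi(Q))$.
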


By discussions after the statement of Theorem \ref{Theorem: modularity}, to extend the proof of this Theorem to the general case where the prime $p$ is unramified in $\ol$, it is enough to prove the following result.

\begin{thm}\label{Theorem: classical general}
Consider a collection of elements of  $\calM^\dagger_k$,
\[
\{f_T: T \subset \SS\},
\]
consisting of Hecke eigenforms with identical prime-to-$p$ Hecke eigenvalues for varying $T$. Assume, also, that the $U_\gerp$-eigenvalue of $f_T$ is $a_\gerp$, if $\gerp\in T$, and $b_\gerp$, if $\gerp \not\in T$, with $a_\gerp\neq b_\gerp$ for all $\gerp|p$. Then, all $f_T$'s are classical. Furthermore, there is a classical Hilbert modular form $h$ of weight $\underline{\kappa}$ and level $\Gamma_{00}(N)$ with the same prime-to-$p$ Hecke eigenvalues as all the $f_T$'s.
\end{thm}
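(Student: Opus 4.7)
The strategy is to iterate the inert-case gluing once per prime above $p$, reducing the $2^{|\SS|}$ forms to a single section on $\tYrigtau$ in $|\SS|$ steps, and then descend via the $\pi$-trace and the Koecher principle to a classical form on $\tXrig$. Fix an ordering $\gerp_1,\dots,\gerp_n$ of $\SS$ and let $\SS_k := \SS \setminus \{\gerp_1,\dots,\gerp_k\}$. Set $g^{(0)}_T := f_T$ for $T\subset\SS$, and for $0\leq k<n$ and $T\subset \SS_{k+1}$ define inductively
\[
g^{(k+1)}_T := a_{\gerp_{k+1}}\, g^{(k)}_{T\cup\{\gerp_{k+1}\}} \;-\; b_{\gerp_{k+1}}\, g^{(k)}_T.
\]
A direct check shows that $g^{(k)}_T$ is a $U_\gerp$-eigenform for every $\gerp\in\SS_k$, with eigenvalue $a_\gerp$ if $\gerp\in T$ and $b_\gerp$ otherwise, and retains the common prime-to-$p$ Hecke eigenvalues of the original $f_T$.

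The inductive claim is that $g^{(k)}_T$ extends to $\tYrigtau \underline{I}^{(k)}$, where $I^{(k)}_{\gerp_i}=[0,f_{\gerp_i}]$ for $i\leq k$ and $I^{(k)}_\gerp = \II_\gerp$ for $\gerp\in\SS_k$. The base case $k=0$ is the corollary following Proposition \ref{Proposition: analytic continuaion split}. For the inductive step, the forms $g^{(k-1)}_T$ and $g^{(k-1)}_{T\cup\gerp_k}$ share all Hecke eigenvalues except the $U_{\gerp_k}$ one, so the $q$-expansion computation of Proposition \ref{Proposition: equality of sections}, applied with $\gerp=\gerp_k$ on the correspondence space $\tilde{\gerY}_{\rm rig}^{\gerp_k}$, yields
\[
\pi_{1,\gerp_k}^{*}\bigl(a_{\gerp_k}g^{(k-1)}_{T\cup\gerp_k}-b_{\gerp_k}g^{(k-1)}_T\bigr) \;=\; \mathrm{pr}^{*}\,\pi_{2,\gerp_k}^{*}\bigl(g^{(k-1)}_{T\cup\gerp_k}-g^{(k-1)}_T\bigr).
\]
Specializing this identity via Lemma \ref{Lemma: saturated at one prime} (taken with $\gerq=\gerp_k$ and $I_{\gerp_k}=\II^\ast_{\gerp_k}$), so that the relevant overlap is genuinely $\pi_{\gerp_k}$-saturated, produces the pointwise equality $g^{(k)}_T = w_{\gerp_k}\bigl(g^{(k-1)}_{T\cup\gerp_k}-g^{(k-1)}_T\bigr)$ on this overlap. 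Since $\nu_{\gerp_k}(w_{\gerp_k}(Q))=f_{\gerp_k}-\nu_{\gerp_k}(Q)$, the $w_{\gerp_k}^{-1}$-image of the canonical-at-$\gerp_k$ sublocus of $\underline{I}^{(k-1)}$ together with $\tYrigtau\underline{I}^{(k-1)}$ itself covers $\tYrigtau\underline{I}^{(k)}$, completing the gluing at step $k$.

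After $n$ iterations we obtain a single section $F := g^{(n)}_\emptyset$ of $\omega^{\underline{k}}$ on all of $\tYrigtau$. Iterating the pointwise gluing identity gives the analogue of Corollary \ref{Corollary: average}, namely that $F$ is constant along the $\pi$-fibers over $\calC^\ast = \tYrigtau\II^\ast$, which is $\pi$-saturated by Corollary \ref{Corollary: Saturated general}. Setting $h := \deg(\pi)^{-1}\pi_{*}(F)$, a section of $\omega^{\underline{k}}$ on $\tXrigtau$, Proposition \ref{Proposition: Koecher} extends $h$ across the codimension-two locus $W_1$ to a classical form on $\tXrig$ of weight $\underline{k}$ and level $\Gamma_{00}(N)$. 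On $\calC^\ast$ we have $\pi^{*}h = \deg(\pi)\cdot F$ by construction, and this equality propagates to all of $\tYrig$ by integrality of local rings together with the fact that $\calC^\ast$ meets every connected component. Finally, the triangular linear system producing the $g^{(k)}_T$ from the $f_T$ is invertible because each $a_{\gerp_i}\neq b_{\gerp_i}$; unwinding it recovers each $f_T$ as a specific $p$-stabilization of $h$, thereby proving classicality of all $f_T$ and producing the desired classical form $h$.

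The principal obstacle is the combinatorial bookkeeping of overlap regions at each inductive step: verifying that the pointwise gluing identity holds on a genuine $\pi_{\gerp_k}$-saturated overlap, that the $w_{\gerp_k}$-image of $\tYrigtau\underline{I}^{(k-1)}$ really covers the required $\nu_{\gerp_k}$-range at prime $\gerp_k$, and that the $q$-expansion comparison is valid on every connected component of $\pi_{1,\gerp_k}^{-1}(\tYrigtau\underline{I}^{(k-1)})$. The last point requires a connectedness statement generalizing Lemma \ref{Lemma: connectedness} to each inductively-extended region; this is a natural but notationally heavy extension of the hypercube analysis in \S\ref{section: stratifications}, which remains the main source of technical complication compared to the inert case.
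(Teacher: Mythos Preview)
Your approach is the paper's: your $g^{(k)}_T$ is exactly the paper's $f_T(\{\gerp_1,\dots,\gerp_k\})$ from Lemma~\ref{Lemma: induction step}, the gluing step via $w_{\gerp_k}$ is the same, and the descent through $\pi_\ast$ plus the Koecher principle is identical.

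The one genuine gap is your final sentence. The linear map $\{f_T\}_{T\subset\SS}\mapsto F=g^{(n)}_\emptyset$ has $2^{|\SS|}$-dimensional source and one-dimensional target, so no ``triangular system'' can be inverted to recover the $f_T$ from $F$ alone; and declaring each $f_T$ to be a $p$-stabilization of $h$ presupposes either classicality of $f_T$ or that $h$ is a $T_\gerp$-eigenform with eigenvalue $a_\gerp+b_\gerp$, neither of which you have established. The paper's correct backward step, for which you already have every ingredient, is to reuse the $w_{\gerp_k}$-gluing identity: once $g^{(k)}_T$ is known to extend to $\tYrig$, so does $w_{\gerp_k}(g^{(k)}_T)$, and on the relevant region this equals $g^{(k-1)}_{T\cup\gerp_k}-g^{(k-1)}_T$ by your own gluing, so the difference is classical as well. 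Now the genuine $2\times 2$ system with rows $a_{\gerp_k}\,g^{(k-1)}_{T\cup\gerp_k}-b_{\gerp_k}\,g^{(k-1)}_T$ and $g^{(k-1)}_{T\cup\gerp_k}-g^{(k-1)}_T$ has determinant $b_{\gerp_k}-a_{\gerp_k}\neq 0$, giving classicality of both $g^{(k-1)}_T$ and $g^{(k-1)}_{T\cup\gerp_k}$. Iterating down to $k=0$ yields classicality of every $f_T$; this is precisely the ``last statement'' of Lemma~\ref{Lemma: induction step} in the paper, and is what should replace your inversion claim.
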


The proof will be by induction. We first prove a lemma which provides the induction step. 

We begin by setting some notation. For any $\Sigma \subset \SS$, let $\HH_\Sigma$ be the Hecke algebra generated by  the Hecke operators at all ideals prime to $p$, as well as all the $U_\gerp$ operators for $\gerp \in \Sigma$. For $T\subset \Sigma$, let $\lambda_T$ denote the system of Hecke eigenvalues on $\HH_\Sigma$ which is equal to the common system of eigenvalues of $\{f_T: T \subset \SS\}$ at prime-to-$p$ ideals, and $a_\gerp$ (respectively, $b_\gerp$) at $U_\gerp$ for $\gerp \in T$ (respectively,  $\gerp \not\in T$).

Let $S\subset \SS$, and choose $\gerq \not \in S$. Define ${\underline I}_S$ to be the multiset of intervals such that $(I_S)_\gerp=[0,f_\gerp]$ for $\gerp \in S$, and $(I_S)_\gerp=\II_\gerp$ for $\gerp \not\in S$.  
Therefore, ${\underline I}_{S\cup\{\gerq\}}$ will have the same components as ${\underline I}_S$ at prime ideals $\gerp\neq\gerq$, and at $\gerq$, we have  $(I_{S\cup\{\gerq\}})_\gerq=[0,f_\gerq]$. Let $\calR_S=\tYrigtau{\underline I}_S$.  Given $\gerq \not\in S$, define ${\underline I}_{S,\gerq}$ to be the multiset of intervals such that $(I_{S,\gerq})_\gerp=(I_S)_\gerp$ for $\gerp \neq \gerq$, and $(I_{S,\gerq})_\gerq=[0,1)$. Define $\calR_{S,\gerq}=\tYrigtau{\underline I}_{S,\gerq} \subset \calR_S$.

For any polarization module $(\gera,\gera^+)$,  we define ${\calR_{S,\gera}}=\calR_S\cap \tilde{\gerY}_{{\rm rig},\gera}$.
Recall that  $\calC^\ast=\tYrigtau \II^\ast$. An argument as in Lemma \ref{Lemma: connectedness} can be used to show that $\calR_{S,\gera}$ is connected. Also, as in the proof of Lemmas \ref{Lemma: union} and  \ref{Lemma: intersection}, we can use  Lemma \ref{Lemma: automatic type one, split} to prove that 
\[
\calR_{S\cup \{\gerq\}}=\calR_S \cup w_\gerq^{-1}(\calR_{S,\gerq}),
\]
and that $\calR_S \cap w_\gerq^{-1}(\calR_{S,\gerq})=\tYrigtau I$, for a multiset of intervals such that $I_\gerq=\II_\gerq^\ast$. Note that this implies that Lemma \ref{Lemma: saturated at one prime} can be applied for this region. As in Remark \ref{Remark: correspondence on R} (and, in fact, as a byproduct of the proof of Proposition \ref{Proposition: analytic continuaion split}), one shows that 
$\pi_{1,\gerq}^{-1}(\calR_S)\subset \pi_{2,\gerq}^{-1}(\calR_S) \subset \tilde{\gerY}^\gerq_{\rm rig}$ (See \S \ref{Section: overconvergent} for notation).

\begin{lem}\label{Lemma: induction step} Let $S \subset \SS$, and $\gerq \not\in S$. Assume 
\[
\{f_T(S): T \subset S^c\}
\]
is a collection of sections of $\omega^{\underline{\kappa}}$ over $\calR_S$. Assume that for each $T\subset S^c$, the form $f_T(S)$ is an $\HH_{S^c}$-eigenform with system of Hecke eigenvalues $\lambda_{T}$. Assume, further, that for each $T \subset S^c$, and $(\uA,C) \in \calC^\ast$, $f_T(S)(\uA,C)$ is independent of the choice of $C_\gerp$ for $\gerp \in S$. Then, there is a collection of sections of $\omega^{\underline{\kappa}}$ on $\calR_S \cup w_{\gerq}^{-1}(\calR_{S,\gerq})=\calR_{S\cup \{\gerq\}}$,
\[
\{f_T(S\cup\{\gerq\}): T \subset (S\cup\{\gerq\})^c\},
\]
such that, for each $T \subset (S\cup\{\gerq\})^c$, the form $f_T(S\cup\{\gerq\})$ is  an $\HH_{(S\cup\{\gerq\})^c}$-eigenform with system of Hecke eigenvalues $\lambda_T$. Furthermore, if $(\uA,C) \in \calC^\ast$, then,  $f_T(S\cup\{\gerq\})(\uA,C)$ is independent of the choice of $C_\gerp$ for $\gerp \in S\cup\{\gerq\}$. 
Finally, for $T \subset (S\cup\{\gerq\})^c$, if $f_T(S\cup\{\gerq\})$ is classical, then both  $f_T(S)$ and $f_{T\cup \{\gerq\}}(S)$ are classical.
\end{lem}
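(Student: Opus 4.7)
The plan is to mimic the gluing argument of Propositions \ref{Proposition: equality of sections} and \ref{Proposition: Intersection}, but applied at the single prime $\gerq$, with the roles of $(f_1, f_2)$ played, for each $T \subset (S \cup \{\gerq\})^c$, by the pair $(f_{T \cup \{\gerq\}}(S), f_T(S))$. Concretely, on $\calR_S$ I would set
\[
g_T := a_\gerq f_{T \cup \{\gerq\}}(S) - b_\gerq f_T(S),
\]
and on $w_\gerq^{-1}(\calR_{S, \gerq})$ I would consider the section $w_\gerq(f_{T \cup \{\gerq\}}(S) - f_T(S))$; the form $f_T(S \cup \{\gerq\})$ will be obtained by gluing these two over the intersection $\calR_S \cap w_\gerq^{-1}(\calR_{S, \gerq})$.

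First, I would reprove the $q$-expansion identity of Proposition \ref{Proposition: equality of sections} in this setting: on $\pi_{1, \gerq}^{-1}(\calR_S)$ one has
\[
\pi_{1, \gerq}^{\ast} g_T = {\rm pr}^{\ast} \pi_{2, \gerq}^{\ast} (f_{T \cup \{\gerq\}}(S) - f_T(S)).
\]
The inputs are the same as in the inert case: $f_T(S)$ and $f_{T \cup \{\gerq\}}(S)$ share all Hecke eigenvalues away from $\gerq$ (so their Fourier coefficients at indices $\xi \not\in \gerq (\gera^{-1})^{+}$ coincide after normalization) and their $U_\gerq$-eigenvalues are $b_\gerq$ and $a_\gerq$ respectively. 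Since $\pi_{1, \gerq}$ is finite and flat, each connected component of $\pi_{1, \gerq}^{-1}(\calR_{S, \gera})$ maps onto $\calR_{S, \gera}$, which by the argument after Lemma \ref{Lemma: connectedness} is connected and meets the unramified cuspidal locus; this lets the identity be propagated from the cusps to all of $\pi_{1, \gerq}^{-1}(\calR_S)$.

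Next, on the intersection $\calR_S \cap w_\gerq^{-1}(\calR_{S, \gerq})$ — which is of the form $\tYrigtau \underline{I}$ with $I_\gerq = \II_\gerq^{\ast}$ and hence is $\Sib_\gerq$-saturated by Lemma \ref{Lemma: saturated at one prime} — I would carry over the three-siblings argument of Proposition \ref{Proposition: Intersection} and Corollary \ref{Corollary: average}, using that $p^{f_\gerq} + 1 \geq 3$. This yields both the compatibility $g_T = w_\gerq(f_{T \cup \{\gerq\}}(S) - f_T(S))$ on the intersection and, on the further-restricted region $\calC^{\ast}$ (which, by the component-wise interval check, lies inside $\calR_S \cap w_\gerq^{-1}(\calR_{S, \gerq})$), the independence of $g_T(\uA, C)$ from $C_\gerq$. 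Gluing then produces $f_T(S \cup \{\gerq\})$ on $\calR_{S \cup \{\gerq\}}$. Its $\HH_{(S \cup \{\gerq\})^c}$-eigen property is automatic, since at every Hecke operator outside $\gerq$ both $f_T(S)$ and $f_{T \cup \{\gerq\}}(S)$ act by the scalars prescribed by $\lambda_T$. The independence from $C_\gerp$ for $\gerp \in S$ on $\calC^{\ast}$ is inherited from the inductive hypothesis, via the formula $f_T(S \cup \{\gerq\}) = g_T$ which is valid there.

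Finally, for the downward classicality statement, the two defining descriptions give, on their respective domains, the identities
\[
f_T(S \cup \{\gerq\}) = a_\gerq f_{T \cup \{\gerq\}}(S) - b_\gerq f_T(S), \qquad w_\gerq(f_T(S \cup \{\gerq\})) = f_{T \cup \{\gerq\}}(S) - f_T(S),
\]
the second read on $\calR_{S, \gerq}$ and then extended (by rigidity on the connected overconvergent region $\calR_S$) to an identity of sections on $\calR_S$. If $f_T(S \cup \{\gerq\})$ is classical, both right-hand sides are then restrictions of classical forms on $\tYrig$; since $a_\gerq \neq b_\gerq$, the resulting $2 \times 2$ linear system forces $f_T(S)$ and $f_{T \cup \{\gerq\}}(S)$ to be classical. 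I expect the main obstacle to be the faithful transcription of the $q$-expansion computation, together with the combinatorial verification that $\calC^{\ast}$ sits inside the gluing intersection and is $\Sib_\gerq$-saturated — both of which, however, are essentially bookkeeping given the component-wise interval descriptions already laid out in the excerpt.
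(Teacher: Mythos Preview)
Your proposal is correct and follows essentially the same route as the paper: define $f_T(S\cup\{\gerq\})$ as $a_\gerq f_{T\cup\{\gerq\}}(S)-b_\gerq f_T(S)$ on $\calR_S$, establish the $q$-expansion identity on $\pi_{1,\gerq}^{-1}(\calR_S)$ via connectedness of $\calR_{S,\gera}$, use Lemma~\ref{Lemma: saturated at one prime} and the sibling argument of Proposition~\ref{Proposition: Intersection} to glue with $w_\gerq(f_{T\cup\{\gerq\}}(S)-f_T(S))$ over the intersection, and deduce the independence from $C_\gerq$ on $\calC^\ast$ as in Corollary~\ref{Corollary: average}. Your downward classicality argument via the $2\times 2$ system with $a_\gerq\neq b_\gerq$ is also exactly what the paper does.
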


\begin{proof}
Consider $T \subset (S\cup\{\gerq\})^c$. We want to define $f_T(S\cup \{\gerq\})$. Both $T$ and $T \cup \{\gerq\}$ are subsets of $S^c$, and, hence, $f_T(S)$ and $f_{T\cup \{\gerq\}}(S)$ are defined. Without loss of generality, we assume they are normalized. Therefore, by assumptions, they have the same $q$-expansion coefficients away from $\gerq$.
Applying a $q$-expansion calculation as in Proposition \ref{Proposition: equality of sections}, and using the fact that $\calR_{S,\gera}$ is connected for each choice of $(\gera,\gera^+)$, we can show that on $\pi_{1,\gerq}^{-1}(\calR_S)$ ($\subset \pi_{2,\gerq}^{-1}(\calR_S)$), we have 
\begin{eqnarray}
\pi_{1,\gerq}^*(a_\gerq f_{T\cup \{\gerq\}}(S)-b_{\gerq}f_{T}(S))={\rm pr}^*\pi_{2,\gerq}^*(f_{T\cup \{\gerq\}}(S)-f_T(S)).  \label{section}
\end{eqnarray}
Now, $a_\gerq f_{T\cup \{\gerq\}}(S)-b_{\gerq}f_{T}(S)$ is defined on $\calR_S$, and $w_{\gerq}(f_{T\cup \{\gerq\}}(S)-f_T(S))$ is defined on $w_{\gerq}^{-1}(\calR_{S,\gerq}) \subset w_{\gerq}^{-1}(\calR_S)$. We want to show that these sections agree on
\[
\calR_S \cap w_{\gerq}^{-1}(\calR_{S,\gerq})
\] 
This can be proved using Equation \ref{section}, in combination with Lemma \ref{Lemma: saturated at one prime}, exactly as in Proposition \ref{Proposition: Intersection} (the fact that Lemma \ref{Lemma: saturated at one prime} can be applied was explained before the statement of Lemma \ref{Lemma: induction step}). Gluing these sections, we obtain an extension of $a_\gerq f_{T\cup \{\gerq\}}(S)-b_{\gerq}f_{T}(S)$ to 
\[
\calR_S \cup w_{\gerq}^{-1}(\calR_{S,\gerq})=\calR_{S \cup \{\gerq\}},
\]
which we define to be $f_T(S\cup\{\gerq\})$, and which is clearly an $\HH_{(S\cup\{\gerq\})^c}$-eigenform with system of Hecke eigenvalues $\lambda_T$.

Assume that  $f_T(S\cup\{\gerq\})=a_\gerq f_{T\cup \{\gerq\}}(S)-b_{\gerq}f_{T}(S)$ can be extended to $\Yrig$. The above argument shows that $w_\gerq(f_{T\cup \{\gerq\}}(S)-f_T(S))$, and hence $f_{T\cup \{\gerq\}}(S)-f_T(S)$ can be extended to $\Yrig$.  Since $a_\gerq\neq b_\gerq$, it follows that both $f_T(S)$ and $f_{T\cup \{\gerq\}}(S)$ extend to $\Yrig$ as claimed.


Finally, we need to show that over $\calC^\ast$, $f_T(S\cup\{\gerq\})$ is independent of $C_\gerp$ for $\gerp \in S \cup \{\gerq\}$. This is clearly true for $\gerp \in S$ by assumptions on $f_T(S)$ and $f_{T\cup \{\gerq\}}(S)$. For $\gerp=\gerq$ this follows as in the proof of Corollary \ref{Corollary: average}.
\end{proof}

We now prove Theorem \ref{Theorem: classical general}. We argue by induction starting with $S=\emptyset$, and $\{f_T(\emptyset)\}=\{f_T\}$, and add prime ideals to $S$ until we reach $S=\SS$. Applying Lemma \ref{Lemma: induction step}, as many times as  $|\SS|$, it follows that $f_\emptyset(\SS)$ is defined on $\calR_\SS=\tYrigtau$, and $f_\emptyset(\SS)(\uA,C)$ is independent of choice of $C$ over $\calC^\ast$. Let $h:=\pi_\ast(f_\emptyset(\SS))$. Using these, and arguing as in the proof of Theorem  \ref{Theorem: Main}, it follows that 
\[
(p^g+1)f_\emptyset(\SS)=\pi^\ast(h).
\]
On the other hand, $h$ is a section of $\omega^{\underline{\kappa}}$ defined on $\tXrigtau$. Applying the rigid analytic Koecher principle, as in the proof of Theorem  \ref{Theorem: Main}, it follows that $h$ extends to $\tXrig$. Therefore, $(p^g+1)f_\emptyset(\SS)=\pi^\ast(h)$ extends to $\tYrig$. Applying the last statement in Lemma \ref{Lemma: induction step} successively, it follows that $f_T(S)$ is classical for all $S \subset \SS$ and $T \subset S^c$. In particular, all $f_T=f_T(\emptyset)$ are classical. Finally, $h$ provides the classical Hilbert modular form of level $\Gamma_{00}(N)$ with the same system of prime-to-$p$ Hecke eigenvalues as all the $f_T$'s. 

 \end{document}